\documentclass{article}
\usepackage{comment}
\usepackage[english]{babel}

\usepackage[letterpaper,top=2cm,bottom=2cm,left=3cm,right=3cm,marginparwidth=1.75cm]{geometry}

\usepackage{amssymb, amsmath, amsthm, mathtools}
\usepackage{amsfonts,setspace}
\usepackage{dsfont}
\usepackage{graphicx}
\usepackage[colorlinks=true, allcolors=blue]{hyperref}
\usepackage{enumerate}
\usepackage{mdframed}
\usepackage{tikz}
\usetikzlibrary{arrows.meta}
 
\theoremstyle{plain}
    \newtheorem{definicion}{Definition}
    \newtheorem{teorema}{Theorem}
    \newtheorem{corolario}{Corollary}[teorema]
    \newtheorem{lema}{Lemma}
    \newtheorem{proposicion}{Proposition}

    \newtheorem{ejemplo}{Example}
    \newtheorem{remark}{Remark}

\title{Maximal Monotone Differential Inclusions with Volterra and One-Sided Lipschitz Perturbations under Nonlocal Initial Conditions and Applications}

\author{
    Abderrahim Jourani$^{1}$ and Manuel Torres-Valdebenito$^{1,2}$\\
    $^{1}$Universit\'e Bourgogne Europe, CNRS, IMB UMR 5584, 21000 Dijon, France\\
    $^{2}$Departamento de Ingenier\'ia Matem\'atica, Universidad de Chile, Santiago, Chile\\
    \texttt{abderrahim.jourani@ube.fr}, \texttt{manuel-alejandro.torres-valdebenito@ube.fr}
}

\begin{document}
\maketitle

\begin{abstract}
    We investigate a class of differential inclusions governed by non-autonomous and autonomous maximal monotone operators, involving set-valued perturbations with a Volterra integral term and subject to a nonlocal condition. Under suitable assumptions on the governing operator, the set-valued perturbation, and the Volterra kernel, we establish existence results for solutions. In particular, the set-valued perturbation is assumed to satisfy a one-sided Lipschitz condition, while the Volterra kernel is required to be Lipschitz continuous. The existence of a trajectory is established by means of an iterative construction and an application of Zorn's lemma. Finally, several examples are presented to illustrate the applicability of the abstract results.
    \vskip 0.2cm
    \textbf{Keywords:} Differential inclusion, Monotone operator, Voltera kernel, One-sided Lipschitz, Gr\"onwall inequality, Fixed point theorem. \\
    
    \textbf{2000 Mathematics Subject Classification:} 49J40, 47J20, 34G25, 35K90, 47J35
\end{abstract}

\section{Introduction}

The theory of differential inclusions has become one of the fundamental branches of modern nonlinear analysis, extending the classical theory of ordinary differential equations by allowing the derivative of the unknown function to belong to a set-valued map rather than being determined by a single-valued vector field. This generalization provides a natural mathematical framework for describing dynamical systems with discontinuities, constraints, hysteresis, uncertainty, memory effects, and nonsmooth phenomena.

The origins of differential inclusions can be traced back to the pioneering work of Zaremba \cite{Zaremba1936}
and H. Marchaud \cite{MR1505014}
in the 1930s, who considered differential equations with discontinuous right-hand sides. A systematic theory, however, emerged during the 1960s through the works referenced in the books of Aubin-Cellina \cite{MR755330, MR494247} and Filippov \cite{MR1028776} and in the papers \cite{MR1042661} and \cite{MR2519593}. In particular, Filippov developed a rigorous framework for differential equations with discontinuous vector fields by replacing discontinuous functions with suitable set-valued maps, thereby laying the foundations of the modern theory of differential inclusions. Since then, the subject has experienced remarkable development, driven both by theoretical advances and by numerous applications in control theory, mechanics, optimization, economics, biology, and engineering.

Among the many classes of differential inclusions, evolution inclusions governed by maximal monotone operators have attracted considerable attention because of their strong analytical properties and their close relationship with convex analysis. The modern theory of maximal monotone operators originated in the seminal works of Minty \cite{MR169064} and Rockafellar \cite{MR261415} during the early 1960s and was further developed by Brézis, whose celebrated monograph \cite{MR348562} established the foundations of nonlinear evolution equations associated with monotone operators in Hilbert spaces.

Let $\mathcal{H}$ be a real Hilbert space and let $A\colon\mathcal{H}\rightrightarrows \mathcal{H}$ be a maximal monotone operator. A typical evolution inclusion is written as:
\begin{align*}
    \begin{cases}
        0\in \dot{x}(t)+A(x(t)),\\
        x(0)=x_0
    \end{cases}
\end{align*}
where the multivalued operator $A$ models nonlinear dissipation, constraints, or nonsmooth constitutive laws. Typical examples include subdifferentials of proper lower semicontinuous convex functions, normal cone operators associated with closed convex sets, and nonlinear elliptic differential operators. The maximal monotonicity of $A$ guarantees the well-posedness of the associated Cauchy problem under appropriate assumptions and provides powerful approximation tools through the resolvent and Yosida regularization.

In many realistic models, however, the evolution of the system is influenced by uncertainties, external disturbances, feedback mechanisms, or nonunique responses that cannot be represented by single-valued perturbations. This motivates the study of differential inclusions with set-valued perturbations of the form
\begin{align*}
    \begin{cases}
        0\in \dot{x}(t)+A(x(t)) - F(t,x(t)),\\
        x(0)=x_0
    \end{cases}
\end{align*}
where $F\colon[0,T]\times\mathcal{H}\rightrightarrows\mathcal{H}$ is a set-valued mapping with nonempty values. Such perturbations naturally arise in control systems with uncertain inputs, differential games, viability theory, optimization, and systems subject to bounded disturbances. Depending on the assumptions imposed on $F$, such as upper semicontinuity, measurable dependence, compactness, convexity of values, or suitable growth conditions, one can establish existence results by combining measurable selection theorems, compactness arguments, fixed-point techniques, and monotone operator methods.

Many physical and biological processes also exhibit hereditary or memory effects, meaning that the present state depends not only on the current configuration but also on the entire past history of the system. Such phenomena are naturally modeled by Volterra integral operators. Consequently, one is led to study evolution inclusions containing memory terms of the form
\begin{align*}
    \begin{cases}
        0 \in \dot{x}(t) + A(x(t)) - F(t,x(t))-  \int_{0}^{t} g(t,s,x(s))ds,\\
        x(0)=x_0
    \end{cases}
\end{align*}
where the integral operator
\begin{align*}
    V(x)(t)=\int_{0}^{t}g(t,s,x(s))ds
\end{align*}
represents the accumulated influence of previous states. Here the mapping  $g\colon[0,T]\times[0,T]\times\mathcal{H}\mapsto \mathcal{H}$ is a Volterra kernel. Volterra-type terms appear naturally in viscoelasticity, heat conduction with memory, population dynamics, epidemiology, neural networks, and many models of materials possessing hereditary characteristics. The presence of such memory operators considerably enriches the mathematical model while introducing significant analytical challenges due to the nonlocal dependence on the solution history. For example, this abstract differential inclusion provides a unified framework for many biological systems. Specifically,
\begin{itemize}
    \item maximal monotone operators describe biological constraints such as positivity, carrying capacities, or limited resources;
    \item set-valued perturbations represent uncertainty in growth rates, transmission coefficients, or control strategies;
    \item Volterra integral terms naturally model memory effects including incubation periods, waning immunity, maturation delays, environmental contamination, and cumulative resource consumption.
\end{itemize}

The simultaneous presence of a maximal monotone operator, a set-valued perturbation, and a Volterra integral term leads to a broad class of nonlinear evolution inclusions capable of modeling constrained dynamical systems with uncertainty and memory. Their mathematical analysis combines techniques from monotone operator theory, convex analysis, measurable set-valued mappings, nonlinear functional analysis, and fixed-point theory. The Yosida approximation of maximal monotone operators, together with compactness methods and measurable selection techniques, provides an effective framework for establishing the existence, regularity and qualitative properties of solutions. 

In many applications, the behavior of a system cannot be adequately described by a classical initial condition such as ($x(0)=x_0$). Instead, the initial or boundary condition may depend on the values of the unknown function at several points or on its values throughout the interval of consideration. Such conditions are referred to as nonlocal conditions. Typical examples include multipoint conditions $x(0)=\sum_{i=1}^{m}\alpha_i x(t_i),$
and integral conditions $x(0)=\int_0^T g(t,x(t)),dt.$

Nonlocal conditions arise naturally in various applications where the state of a system at one time is influenced by its past, future, or overall behavior.
 The combination of differential inclusions and nonlocal conditions leads to a class of problems of the form
\begin{align*}
    \begin{cases}
        0 \in \dot{x}(t) + A(t)(x(t)) - F(t,x(t))-  \int_{0}^{t} g(t,s,x(s))ds,\\
        x(0)=h(x(\cdot))    
    \end{cases}
\end{align*}
where $h\colon C([0,T], \mathcal{H}) \longrightarrow \overline{D(A(0))}$ is a nonlocal operator and for each $t\in [0, T]$,  $A(t) : D(A(t))\subset\mathcal{H} \rightrightarrows \mathcal{H}$ is a maximal monotone operator. Such problems are more general than classical initial-value problems and present additional mathematical difficulties. 

The study of nonlocal differential inclusions has attracted considerable attention because of its theoretical importance and its applications to control theory, optimization, mechanics, and other areas of applied mathematics (see \cite{MR3908332}, \cite{MR2820290},  \cite{MR3241568}, \cite{MR1910880}, \cite{ZHU2012523} and references therein). In \cite{MR1910880}, the authors established existence results under the One Sided Lipschitzness of the perturbation $F$ and the maximal monotonicity of the autonomous maximal monotone operator $A$ and without Voltera term which complicates the study. The OSL property is crucial in the existence of solution of these kind of problems. Indeed, the (upper semi-) continuity alone is not sufficient, as it is shown in the counter-example in  \cite{MR3682176}, where $F$ is a single-valued continuous (bounded)  mapping, $A=N(C, \cdot)$, the normal cone in the sense of convex analysis to a closed convex (bounded)  set $C$,  and $g\equiv 0$. 

The purpose of this work is to investigate differential inclusions governed by non-autonomous maximal monotone operators with set-valued perturbations involving a Volterra integral term under a nonlocal condition. The main objective is to establish existence results under suitable assumptions on the governing operator, the set-valued mapping, and the Volterra kernel, namely the Vladimirov absolute continuity of the set-valued operator $A$,  the One Sided Lipschitzness of the set-valued perturbation $F$ and the Lipschitz continuity of the Volterra kernel $g$.  Our approach is based on new iterative schemes, allowing us at the end to apply Zorn lemma to produce a maximal trajectory. 
The analysis developed herein contributes to the theory of nonlinear evolution inclusions and provides a mathematical framework applicable to a wide variety of models arising in mechanics, control theory, optimization, and systems with hereditary effects. 

Our approach enables us to address not only the non-autonomous case but also the autonomous one under weak assumptions. 
In particular, it allows us to consider sweeping processes involving a Volterra term, both in the convex and, more generally, in the prox-regular setting. Such problems have already been investigated in the literature under the assumption that the perturbation $F$ is Lipschitz continuous (see, for instance, \cite{MR4614261} and \cite{MR4894870}, and the references therein). In a recent work \cite{jouraninarvaezvilches2026}, the authors consider Volterra sweeping processes where the set-valued perturbations satisfies a compactness condition and the maximal monotone term is replaced by the normal cone of prox-regular sets. Note that in the present work neither the measure of noncompactness in $F$ nor the compactness of the semigroup generated by $A$ is required. The results obtained are illustrated by some examples.

\paragraph{Organization of the paper:} Section 2 recalls some preliminaries on maximal monotone operators, the measurability of set-valued mappings, and several tools from nonlinear analysis, including Grönwall-type lemmas. Section 3 presents the main assumptions and an existence result for non-autonomous differential inclusions governed by maximal monotone operators.

In Section 4, these results are applied to establish the main existence theorem for solutions and $\varepsilon$-solutions of differential inclusions involving maximal monotone operators and set-valued perturbations with a Volterra integral term under a nonlocal condition. Section 5 provides examples and remarks illustrating the verification of the main hypotheses and the applicability of the obtained results. Section 6 concerns some conclusions. Finally, the Appendix contains the complete proofs of the results stated in Section 3.

\section{Preliminaries}

Let $\mathcal{H}$ be a {\it separable Hilbert space}, endowed with the inner product $\langle\cdot,\cdot\rangle$ and the induced norm $\|\cdot\|$. Given $x\in\mathcal{H}$ and $r>0$, the closed ball centered at $x$ with radius $r$ is denoted by $\mathbb{B}(x;r):=\{y\in\mathcal{H}:\|x-y\|\leq r\}$ and, for simplicity, the notation $r\mathbb{B}:=\mathbb{B}(0;r)$ is adopted.  The interval $[0,T]$ is endowed with the $\sigma$-field $\mathcal{T}$ of its Lebesgue measurable subsets, which is complete with respect to the ($\sigma$-finite) Lebesgue measure, and $\mathcal{B}$ denotes the Borel $\sigma$-field of $\mathcal{H}$. 

\paragraph{Maximal Monotone Operators and Semigroups:} Let $A\colon\mathcal{H}\rightrightarrows\mathcal{H}$ be a multi-valued operator. The domain and the graph of $A$ are defined, respectively, by:
\begin{align*}
    D(A) &:= \{x\in\mathcal{H}:A(x)\neq\emptyset\},\\
    G(A) &:= \{(x,y)\in\mathcal{H}\times\mathcal{H} : y\in A(x)\}.
\end{align*}
The inverse operator $A^{-1}\colon\mathcal{H}\rightrightarrows\mathcal{H}$ of $A$ is defined by:
\begin{eqnarray*}
    A^{-1}(y) := \{x\in\mathcal{H} : y\in A(x)\}.
\end{eqnarray*}
Let $A\colon\mathcal{H}\rightrightarrows\mathcal{H}$ be a multivalued operator.  The operator $A$ is called monotone if for any $(x_{1},y_{1}),(x_{2},y_{2})\in\operatorname{gr}(A)$, one has $\langle y_{1}-y_{2},x_{1}-x_{2}\rangle\geq0$. The monotone operator $A$ is called maximal monotone, if there is no monotone operator $B$ such that $\operatorname{gr}(A)$ is contained strictly in $\operatorname{gr}(B)$.
\paragraph{Elements About Measurability of Multivalued Mappings:} A mapping $f\colon[0,T]\to\mathcal{H}$ is said to be strongly measurable if it is measurable with respect to the Borel $\sigma$-algebra generated by the norm topology of $\mathcal{H}$. In contrast, $f$ is said to be weakly measurable if it is measurable with respect to the Borel $\sigma$-algebra generated by the weak topology of $\mathcal{H}$, and both notions coincide when $\mathcal{H}=\mathbb{R}^{n}$. Let $F\colon[0,T]\rightrightarrows\mathcal{H}$ be a multi-valued mapping. It is said to be strongly measurable (or simply \textit{measurable}) if, for every open set $V\subseteq\mathcal{H}$, the set $\{t\in[0,T]:F(t)\cap V\neq\emptyset\}$ is measurable. Moreover, $F$ is said to admit a strongly measurable selection if there exists a strongly measurable mapping $f\colon[0,T]\to\mathcal{H}$ such that $f(t)\in F(t)$ for a.e. $t\in[0,T]$.

A multivalued mapping $G\colon\mathcal{H}\rightrightarrows\mathcal{H}$ is called upper hemicontinuous (resp. lower  hemicontinuous) if for every $y\in\mathcal{H}$, the support function $x\in\mathcal{H}\mapsto\sigma_{G(x)}(y):=\sup\{\langle y,v\rangle :v\in G(x)\}\in\overline{\mathbb{R}}$ is upper semicontinuous (resp. lower semicontinuous). $G$ is hemicontinuous if it is upper and lower hemicontinuous. It is not difficult to see that if the multivalued mapping $G\colon\mathcal{H}\rightrightarrows\mathcal{H}$ is weakly compact-valued and upper semicontinuous with respect to the strong$\times$weak topology, then $G$ is upper hemicontinuous. 


\paragraph{Elements About Nonlinear Analysis:} The following version of the differential Gr\"onwall inequality is recalled, as it will be used to derive a priori estimates for the trajectories under consideration.

\begin{lema}[Differential Gr\"onwall inequality]
    \label{lem:DifferentialGronwallInequality}
    Let $a,b\colon[0,T]\to\mathbb{R}_{+}$ be two Lebesgue integrable functions and let $\theta\colon[0,T]\to\mathbb{R}_{+}$ be an absolutely continuous function, such that:
    \begin{eqnarray*}
        \dot{\theta}(t) \leq a(t) + b(t)\theta(t) \quad\textrm{a.e. }t\in[0,T].
    \end{eqnarray*}
    Then, for every $0\leq s<t\leq T$, it holds that:
    \begin{align*}
        \theta(t) \leq \theta(0)\exp\left[\int_{0}^{t} b(s)ds\right] + \int_{0}^{t}a(s) \exp\left[ \int_{s}^{t} b(\tau)d\tau \right]ds \quad\forall t\in[0,T],
    \end{align*}
    Moreover, if $a(\cdot)$ is increasing, then:
    \begin{align*}
        \theta(t) \leq \theta(0)\exp\left[\int_{0}^{t} b(s)ds\right] + a(t)\int_{0}^{t} \exp\left[ \int_{s}^{t} b(\tau)d\tau \right]ds \quad\forall t\in[0,T].
    \end{align*}
\end{lema}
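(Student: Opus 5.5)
The plan is the standard integrating-factor argument. Set $B(t):=\int_0^t b(\tau)\,d\tau$. It is absolutely continuous and nondecreasing, with $\dot B=b$ a.e. Define
\begin{align*}
    \psi(t):=\theta(t)\exp\left[-B(t)\right].
\end{align*}
Since $\theta$ is absolutely continuous and $\exp[-B(\cdot)]$ is absolutely continuous (a Lipschitz function composed with an absolutely continuous one), and both are bounded on $[0,T]$, the product $\psi$ is absolutely continuous. The product rule holds almost everywhere, so
\begin{align*}
    \dot\psi(t)=\left(\dot\theta(t)-b(t)\theta(t)\right)\exp[-B(t)]\leq a(t)\exp[-B(t)] \quad\textrm{a.e. } t\in[0,T].
\end{align*}
This inequality uses the hypothesis and the positivity of the exponential factor.

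Next integrate over $[0,t]$. This is legitimate because $\psi$ is absolutely continuous, so $\psi(t)-\psi(0)=\int_0^t\dot\psi$. The right-hand side is integrable because $a\in L^1$ and $\exp[-B]\le 1$. We obtain
\begin{align*}
    \theta(t)\exp[-B(t)]\leq \theta(0)+\int_0^t a(s)\exp[-B(s)]\,ds.
\end{align*}
Multiplying by $\exp[B(t)]$ and using $B(t)-B(s)=\int_s^t b(\tau)\,d\tau$ gives exactly the first inequality for every $t\in[0,T]$.

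For the second statement, assume $a$ is increasing. Then $a(s)\le a(t)$ for $s\le t$. Since the weight $\exp[\int_s^t b]$ is nonnegative, we can bound the integrand by $a(t)\exp[\int_s^t b]$ and pull $a(t)$ out of the integral. An increasing function on $[0,T]$ is measurable, so no measurability issue arises.

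The only delicate point is the justification of the a.e.\ product rule and of the fundamental theorem of calculus for $\psi$. Both follow once $\psi$ is known to be absolutely continuous, which is the reason for checking that the product of two bounded absolutely continuous functions on a compact interval is again absolutely continuous. Nonnegativity of $\theta$ is not actually needed for this argument.
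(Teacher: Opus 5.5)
Your proposal is correct: the integrating factor $\exp[-\int_0^t b(\tau)\,d\tau]$, the a.e.\ product rule for the absolutely continuous function $\psi$, and the monotonicity bound $a(s)\le a(t)$ give both inequalities, and your remark that $\theta\ge 0$ is not needed is accurate. The paper states this lemma without proof as a standard fact. Your argument is the standard one, the same integrating-factor device the paper uses in its proof of Lemma~\ref{lem:IntegralGronwallInequality}.
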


The previous result applies to standard differential inequalities. However, the framework under consideration involves more intricate estimates due to the presence of memory terms. The following enhanced version of Gr\"onwall's inequality \cite{MR4836331} will be instrumental in deriving the required bounds.

\begin{lema}[[Enhanced differential Gr\"onwall inequality]]
    \label{lem:EnhancedDifferentialGronwallInequality}
    Let $k_{1},k_{2},k_{3},k_{5}\colon[0,T]\to\mathbb{R}_{+}$ and $k_{4},k_{6}\colon [0,T]^{2}\to\mathbb{R}_{+}$ be integrable functions, where $k_{4}\in L^{1}(D)$ and $k_{6}\in L^{2}(D)$ with $D:=\{(t,s)\in[0,T]^{2}:s\leq t\}$, and let $\theta\colon[0,T]\to\mathbb{R}_{+}$ be an absolutely continuous function, such that:
    \begin{align*}
        \dot{\theta}(t) \leq
        &\varepsilon(t) + k_{1}(t)\sqrt{\theta(t)} + k_{2}(t)\theta(t) \\
        &+ k_{3}(t)\sqrt{\theta(t)} \int_{0}^{t} k_{4},(t,s)\sqrt{\theta(s)}ds + k_{5}(t)\int_{0}^{t} k_{6}(t,s)\sqrt{\theta(s)}ds  \quad\textrm{a.e. }t\in[0,T].
    \end{align*}
    Then:
    \begin{align*}
        \theta(t) \leq
        & \theta(0) \exp\left[ \int_{0}^{t} \left(k_{1}(s) + k_{2}(s) + k_{3}(s) \int_{0}^{s}k_{4}(s,\tau)d\tau + \frac{k_{5}(s)s}{2}\right) ds\right]\\
        & + \int_{0}^{t} \left(\varepsilon(s) + k_{1}(s)+ \frac{k_{5}(s)}{2}\int_{0}^{s}k_{6}^{2}(s,\tau)d\tau\right) E(t,s)ds\qquad\forall t\in[0,T],
    \end{align*}
    where
    \begin{eqnarray*}
        E(t,s):=\exp\left[\int_{s}^{t} \left(k_{1}(\tau) + k_{2}(\tau) + k_{3}(\tau) \int_{0}^{\tau}k_{4}(\tau,\sigma)d\sigma + \frac{k_{5}(\tau)\tau}{2} \right)d\tau \right].
    \end{eqnarray*}
\end{lema}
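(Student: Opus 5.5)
The plan is to reduce the differential inequality to a linear one that the differential Gr\"onwall inequality (Lemma \ref{lem:DifferentialGronwallInequality}) can handle. The device is to bound every $\sqrt{\theta}$ factor by quantities linear in $\theta$, plus remainders free of $\theta$. First I will use the elementary inequality $\sqrt{\theta}\le 1+\theta$, or better $\sqrt{\theta}\le \tfrac12(1+\theta)$, on the term $k_{1}(t)\sqrt{\theta(t)}$. With the cruder version this gives $k_1\sqrt\theta\le k_1+k_1\theta$, which matches the $k_1$ appearing both in the exponent and in the additive integrand of the claimed bound.

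For the memory terms I would introduce the nondecreasing envelope $\Theta(t):=\max_{s\in[0,t]}\theta(s)$. This suggests working with an auxiliary function instead: define $\psi(t)$ as the right-hand side of the claimed estimate and show $\theta\le\psi$. Alternatively, I would handle each memory term directly.

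The cross term is treated by monotonicity and Young's inequality:
\begin{align*}
k_3(t)\sqrt{\theta(t)}\int_0^t k_4(t,s)\sqrt{\theta(s)}\,ds
\le k_3(t)\Big(\int_0^t k_4(t,s)\,ds\Big)\,\Theta(t).
\end{align*}

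The last term is treated by Cauchy--Schwarz and Young:
\begin{align*}
k_5(t)\int_0^t k_6(t,s)\sqrt{\theta(s)}\,ds
\le \frac{k_5(t)}{2}\int_0^t k_6^2(t,s)\,ds+\frac{k_5(t)}{2}\int_0^t\theta(s)\,ds
\le \frac{k_5(t)}{2}\int_0^t k_6^2(t,s)\,ds+\frac{k_5(t)\,t}{2}\,\Theta(t).
\end{align*}
These produce exactly the coefficients $k_3(t)\int_0^tk_4$ and $k_5(t)t/2$ in the exponent, and the additive term $\frac{k_5}{2}\int_0^s k_6^2$. Collecting everything yields
\begin{align*}
\dot\theta(t)\le a(t)+b(t)\Theta(t),
\end{align*}
where $a=\varepsilon+k_1+\frac{k_5}{2}\int_0^{\cdot}k_6^2$ and $b=k_1+k_2+k_3\int_0^{\cdot}k_4+\frac{k_5\,\cdot}{2}$.

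The main obstacle is that Lemma \ref{lem:DifferentialGronwallInequality} needs $\theta$, not $\Theta$, on the right-hand side. To close this gap I will let $\psi$ be the absolutely continuous solution of $\dot\psi=a+b\psi$ with $\psi(0)=\theta(0)$, which is exactly the claimed bound by the variation-of-constants formula. Since $a,b\ge0$, $\psi$ is nondecreasing. I then show $\theta\le\psi+\delta$ for every $\delta>0$ by a first-crossing argument. Let $t^*$ be the first time where $\theta=\psi+\delta$. On $[0,t^*]$ we have $\Theta\le\psi+\delta$, because $\psi$ is nondecreasing. Hence $\dot\theta\le a+b(\psi+\delta)$, and integrating gives $\theta(t^*)\le\psi(t^*)+\int_0^{t^*}b\,\delta$. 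This contradicts the crossing unless that integral is at least $\delta$.

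To make the comparison strict, I would instead compare with $\psi_\delta$ solving $\dot\psi_\delta=a+\delta+b\psi_\delta$ and $\psi_\delta(0)=\theta(0)+\delta$. At a first touching time $\theta(t^*)=\psi_\delta(t^*)$, so $\theta-\psi_\delta$ is negative before $t^*$ and vanishes at $t^*$. But on $[0,t^*]$,
\begin{align*}
\dot\theta-\dot\psi_\delta\le b(\Theta-\psi_\delta)-\delta\le -\delta<0,
\end{align*}
using monotonicity of $\psi_\delta$ to get $\Theta\le\psi_\delta$ there. Integrating from $0$ gives $\theta(t^*)-\psi_\delta(t^*)<-\delta<0$, a contradiction. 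Letting $\delta\to0$ then yields the stated estimate.

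Integrability of $b$ follows from $k_4\in L^1(D)$ via Fubini, provided $k_3$ is bounded or the product is integrable, which I would assume as implicit in the statement. Integrability of $a$ follows from $k_6\in L^2(D)$ under the same caveat.
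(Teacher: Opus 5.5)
Your proof is correct. The term-by-term bounds you use are the ones the paper uses: $\sqrt{\theta}\le 1+\theta$ on the $k_1$ term, monotonicity of the envelope for the $k_3k_4$ cross term, and Young's inequality plus monotonicity for the $k_5k_6$ term. The route differs in the choice of monotone envelope and in how you close the argument.

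You bound against the running maximum $\Theta(t)=\max_{[0,t]}\theta$. This leaves you with $\dot\theta\le a+b\Theta$, which Lemma~\ref{lem:DifferentialGronwallInequality} cannot take directly. You therefore need the strict supersolution $\psi_\delta$, a first-touching time, and a limit $\delta\to0$. Your final version of this comparison is sound, and you were right to discard the earlier $\psi+\delta$ attempt.

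The paper instead takes $\vartheta(t):=\theta(0)+\int_0^t(\text{right-hand side})\,ds$ as the envelope. It is absolutely continuous, nondecreasing and $\ge\theta$. So every $\sqrt{\theta}$ can be replaced by $\sqrt{\vartheta}$, and the memory integrals can be bounded by $\vartheta(t)$. This gives $\dot\vartheta\le a+b\vartheta$ directly, and the linear Gr\"onwall lemma finishes the proof with no contradiction argument or $\delta$-perturbation.

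The same device would have removed your ``main obstacle'' in one line. Set $\vartheta:=\theta(0)+\int_0^{\cdot}(a+b\Theta)$. Then $\theta\le\vartheta$, so $\Theta\le\vartheta$ because $\vartheta$ is nondecreasing, and hence $\dot\vartheta\le a+b\vartheta$.

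Your route buys a self-contained comparison principle for inequalities of the form $\dot\theta\le a+b\sup_{[0,t]}\theta$. The paper's route buys brevity.

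Both proofs rely on the same unstated hypotheses. The first is $\varepsilon\ge0$: you use it when you assert $a\ge0$, which is what makes $\psi_\delta$ nondecreasing, and the paper needs it for $\vartheta$ to be nondecreasing. The second is integrability of $k_3\int_0^{\cdot}k_4$ and $k_5\int_0^{\cdot}k_6^2$, which you correctly flag.
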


\begin{proof}
    Let $\vartheta\colon[0,T]\to\mathbb{R}_{+}$ be the absolutely continuous function satisfying $\vartheta(0)=\theta(0)$ and
    \begin{align*}
        \dot{\vartheta}(t):=
        &\varepsilon(t) + k_{1}(t)\sqrt{\theta(t)} + k_{2}(t)\theta(t) \\
        &+ k_{3}(t)\sqrt{\theta(t)} \int_{0}^{t} k_{4}(t,s)\sqrt{\theta(s)}ds + k_{5}(t)\int_{0}^{t} k_{6}(t,s)\sqrt{\theta(s)}ds  \quad\textrm{a.e. }t\in[0,T],
    \end{align*}
    Therefore, $\theta(t)\leq\vartheta(t)$ for every $t\in[0,T]$. Hence, for a.e. $t\in[0,T]$,
    \begin{align*}
        \dot{\vartheta}(t) 
        \leq 
        &\varepsilon(t) + k_{1}(t)\sqrt{\theta(t)} + k_{2}(t)\theta(t) \\
        &+ k_{3}(t)\sqrt{\theta(t)} \int_{0}^{t} k_{4}(t,s)\sqrt{\theta(s)}ds + k_{5}(t)\int_{0}^{t} k_{6}(t,s)\sqrt{\vartheta(s)}ds\\
        \leq 
        &\varepsilon(t) + k_{1}(t)\sqrt{\vartheta(t)}+k_{2}(t)\vartheta(t)+\left(k_{3}(t)\int_{0}^{t}k_{4}(t,s)ds\right)\vartheta(t) +\frac{k_{5}(t)}{2}\int_{0}^{t} (k_{6}^{2}(t,s) + \vartheta(s))ds \\
        \leq
        &\varepsilon(t) + \frac{k_{5}}{2}\int_{0}^{t}k_{6}^{2}(t,s)ds + k_{1}(t)\sqrt{\vartheta(t)} + \left(k_{2}(t) + k_{3}(t) \int_{0}^{t}k_{4}(t,s)ds + \frac{k_{5}(t)t}{2} \right)\vartheta(t) ,
    \end{align*}
    where the fact that $\vartheta$ is nondecreasing and Young's inequality applied to the product of the nonnegative terms $k_{6}(t,s)\sqrt{\vartheta(s)}$ have been used. Moreover, since $\sqrt{\vartheta(t)}\leq\vartheta(t)+1$, it follows that, for a.e. $t\in[0,T]$,
    \begin{eqnarray*}
        \dot{\vartheta}(t) \leq \varepsilon(t) + k_{1}(t)+ \frac{k_{5}(t)}{2}\int_{0}^{t}k_{6}^{2}(t,s)ds + \left(k_{1}(t) + k_{2}(t) + k_{3}(t) \int_{0}^{t}k_{4}(t,s)ds + \frac{k_{5}(t)t}{2} \right)\vartheta(t).
    \end{eqnarray*}
    Applying Lemma \ref{lem:DifferentialGronwallInequality}, the following estimate is obtained:
    \begin{align*}
        \vartheta(t) 
        \leq
        & \vartheta(0) \exp\left[ \int_{0}^{t} \left(k_{1}(s) + k_{2}(s) + k_{3}(s) \int_{0}^{s}k_{4}(s,\tau)d\tau + \frac{k_{5}(s)s}{2}\right) ds\right]\\
        & + \int_{0}^{t} \left(\varepsilon(s) + k_{1}(s)+ \frac{k_{5}(s)}{2}\int_{0}^{s}k_{6}^{2}(s,\tau)d\tau\right) E(t,s)ds\qquad\forall t\in[0,T],
    \end{align*}
    where $E(t,s)$ is given by Lemma \ref{lem:EnhancedDifferentialGronwallInequality}. Therefore,
    \begin{align*}
        \theta(t) \leq
        & \theta(0) \exp\left[ \int_{0}^{t} \left(k_{1}(s) + k_{2}(s) + k_{3}(s) \int_{0}^{s}k_{4}(s,\tau)d\tau + \frac{k_{5}(s)s}{2}\right) ds\right]\\
        & + \int_{0}^{t} \left(\varepsilon(s) + k_{1}(s)+ \frac{k_{5}(s)}{2}\int_{0}^{s}k_{6}^{2}(s,\tau)d\tau\right) E(t,s)ds\qquad\forall t\in[0,T],
    \end{align*}
\end{proof}

\begin{lema}[Integral Gr\"onwall inequality]
    \label{lem:IntegralGronwallInequality}
    Let $\theta\colon[0,T]\to\mathbb{R}_{+}$ be a continuous function. Consider the integrable functions $a\colon[0,T]\to\mathbb{R}_{+}$ and $b\colon[0,T]\to\mathbb{R}_{+}$ such that, for every $t\in[0,T]$, the following inequality holds:
    \begin{equation*}
        \theta(t) \leq \theta(0) + a(t) + \int_{0}^{t} b(s)\theta(s)ds.
    \end{equation*}
    Then, the following estimate holds:
    \begin{equation*}
        \theta(t) \leq \theta(0) + a(t)
        + \int_{0}^{t}
        \big(\theta(0)+a(s)\big)b(s)
        \exp\left(\int_{s}^{t}b(\tau)d\tau\right)ds
        \quad\forall t\in[0,T].
    \end{equation*}
\end{lema}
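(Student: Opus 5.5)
The plan is to reduce the integral inequality to the differential one, Lemma \ref{lem:DifferentialGronwallInequality}, applied to the integral term. Set $c(t):=\theta(0)+a(t)$ and
\begin{equation*}
    \psi(t):=\int_{0}^{t} b(s)\theta(s)\,ds, \qquad t\in[0,T].
\end{equation*}
Since $\theta$ is continuous and $b$ is integrable, the product $b\theta$ is integrable. Hence $\psi$ is absolutely continuous, nonnegative, satisfies $\psi(0)=0$, and has derivative $\dot\psi(t)=b(t)\theta(t)$ for a.e.\ $t\in[0,T]$.

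Next we turn the hypothesis into a linear differential inequality for $\psi$. The hypothesis says $\theta(t)\le c(t)+\psi(t)$. Multiplying by $b(t)\ge 0$ gives
\begin{equation*}
    \dot\psi(t)\le b(t)c(t)+b(t)\psi(t) \quad\textrm{a.e. } t\in[0,T].
\end{equation*}
This is exactly the setting of Lemma \ref{lem:DifferentialGronwallInequality}, with $a$ replaced by $bc$ and $b$ unchanged. Applying that lemma, and using $\psi(0)=0$, yields
\begin{equation*}
    \psi(t)\le \int_{0}^{t}\big(\theta(0)+a(s)\big)b(s)\exp\left(\int_{s}^{t}b(\tau)d\tau\right)ds .
\end{equation*}
Substituting this into $\theta(t)\le \theta(0)+a(t)+\psi(t)$ gives the claimed estimate for every $t\in[0,T]$.

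The only delicate point is a hypothesis check: Lemma \ref{lem:DifferentialGronwallInequality} requires its forcing term $b c$ to be integrable. Integrability of $b$ and $a$ separately does not guarantee that $b\,a$ is integrable.
\begin{itemize}
    \item If $a$ is bounded, for instance continuous as in the intended applications, then $bc\in L^{1}$ and there is no issue.
    \item In general, the right-hand side of the conclusion may equal $+\infty$, in which case the estimate holds trivially.
\end{itemize}
So it suffices to treat the case $\int_0^t b(s)c(s)\,ds<\infty$. In that case I would avoid invoking the lemma and argue directly with the integrating factor $e^{-\int_0^t b}$. The function $\frac{d}{dt}\big(\psi(t)e^{-\int_0^t b}\big)$ is bounded above by $b(t)c(t)e^{-\int_0^t b}$ a.e. Integrating from $0$ to $t$ and using $\psi(0)=0$ gives the same bound on $\psi(t)$, and the conclusion follows as before.
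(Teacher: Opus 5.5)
Your proof is correct and is essentially the paper's argument. The paper likewise sets $\vartheta(t)=\int_0^t b\theta$, derives $\dot\vartheta\le b(\theta(0)+a)+b\vartheta$, and integrates directly with the factor $\exp\bigl(-\int_0^t b\bigr)$ instead of citing Lemma \ref{lem:DifferentialGronwallInequality}; your point that $ab$ need not be integrable is one the paper passes over silently, and your direct integrating-factor step (integrals of nonnegative functions taken in $[0,+\infty]$) handles it.
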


\begin{proof}
    Let $\vartheta\colon[0,T]\to\mathbb{R}_{+}$ be the absolutely continuous function such that $\vartheta(0)=0$ and
    \begin{align*}
        \vartheta(t) := \int_{0}^{t}b(s)\theta(s)ds\quad\forall t\in[0,T],
    \end{align*}
    therefore, $\dot{\vartheta}(t) = b(t)\theta(t)$ for a.e. $t\in[0,T]$. Since $b$ is a nonnegative function, it follows that:
    \begin{align*}
        \dot{\vartheta}(t)
        &\leq b(t)\left(\theta(t_{0})+a(t)+\int_{0}^{t}b(s)\theta(s)ds\right) \\
        &\leq b(t)(\theta(0)+a(t)) + b(t)\vartheta(t)\qquad\qquad\textrm{a.e. }t\in[0,T].
    \end{align*}
    Therefore, applying the integrating factor $\mu(t):=\exp\left(-\int_{0}^{t}b(s)ds\right)$ to the expression:
    \begin{align*}
        \dot{\vartheta}(t) - b(t)\vartheta(t)\leq (\theta(0)+a(t))b(t) \quad\textrm{a.e. }t\in[0,T],
    \end{align*}
    it follows that:
    \begin{align*}
        \vartheta(t)\mu(t) \leq \int_{0}^{t} \big(\theta(0)+a(s)\big)b(s)\mu(s)ds\quad\forall t\in[0,T],
    \end{align*}
    thus:
    \begin{align*}
        \vartheta(t) \leq \exp\left(\int_{0}^{t}b(s)ds\right)\int_{0}^{t} \big(\theta(0)+a(s)\big)b(s)\exp\left(\int_{0}^{s}b(\sigma)d\sigma\right)ds\quad\forall t\in[0,T],
    \end{align*}
    therefore:
    \begin{align*}
        \theta(t) 
        &\leq \theta(0)+a(t) + \int_{0}^{t} b(s)\theta(s)ds\\
        &\leq \theta(0)+a(t)+\exp\left(\int_{0}^{t}b(s)ds\right)\int_{0}^{t} \big(\theta(0)+a(s)\big)b(s)\exp\left(\int_{0}^{s}b(\sigma)d\sigma\right)ds\quad\forall t\in[0,T].
    \end{align*}
    
\end{proof}

In the following, several results derived from the Banach Fixed-Point Theorem are presented, as they will be used in the proof of the main result. In particular, a class of operators admitting fixed points in spaces of functions is introduced. A relevant subclass is formed by the so-called \emph{history-dependent operators} (see \cite[Chapter 2]{MR3752610}). An operator $R\colon C([0,T];\mathcal{H})\to C([0,T];\mathcal{H})$ is called a history-dependent operator if there exists a constant $L_{R}>0$ such that, for all $t\in[0,T]$, one has:
\begin{equation*}
    \|R(x)(t)-R(y)(t)\|
    \leq L_{R}\int_{0}^{t} \|x(s)-y(s)\|ds,
    \qquad \forall x,y\in C([0,T];\mathcal{H}).
\end{equation*}

This class of operators satisfies an integral-type contractive condition, which allows the application of fixed point arguments in trajectory spaces. Moreover, such operators arise naturally in the modeling of dynamical systems with memory, where the evolution in time depends not only on the current state but also on the past states of the trajectory. Typical examples include integro-differential equations, inclusions with memory, and Volterra-type models.

\begin{proposicion}[History-dependent fixed-point]
    \label{prop:HistoryDependentFixedPoint}
    Let $R\colon C([0,T];\mathcal{H})\to C([0,T];\mathcal{H})$ be a history-dependent operator. Then, the operator $R$ has a unique fixed point.
\end{proposicion}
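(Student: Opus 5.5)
We prove the statement with the Banach Fixed-Point Theorem, after replacing the sup norm on $C([0,T];\mathcal{H})$ by an equivalent Bielecki-type weighted norm in which $R$ is a strict contraction. For a parameter $\beta>0$ to be fixed later, we set
\begin{equation*}
    \|x\|_{\beta} := \max_{t\in[0,T]} e^{-\beta t}\|x(t)\| .
\end{equation*}
Since $e^{-\beta T}\|x\|_{\infty}\leq\|x\|_{\beta}\leq\|x\|_{\infty}$, this norm is equivalent to the uniform norm. Hence $(C([0,T];\mathcal{H}),\|\cdot\|_{\beta})$ is a Banach space, because $\mathcal{H}$ is complete.

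The key estimate follows from the history-dependent property. For $x,y\in C([0,T];\mathcal{H})$ and $t\in[0,T]$, we have
\begin{equation*}
    \|R(x)(t)-R(y)(t)\| \leq L_{R}\int_{0}^{t} e^{\beta s}\, e^{-\beta s}\|x(s)-y(s)\|ds \leq L_{R}\|x-y\|_{\beta}\int_{0}^{t}e^{\beta s}ds \leq \frac{L_{R}}{\beta}e^{\beta t}\|x-y\|_{\beta}.
\end{equation*}
Multiplying by $e^{-\beta t}$ and taking the maximum over $t$ gives $\|R(x)-R(y)\|_{\beta}\leq \frac{L_{R}}{\beta}\|x-y\|_{\beta}$. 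We now choose $\beta>L_{R}$, for instance $\beta=2L_{R}$. Then $R$ is a contraction with constant $1/2$, and the Banach Fixed-Point Theorem yields existence and uniqueness of a fixed point in $C([0,T];\mathcal{H})$. Uniqueness does not depend on the norm, so it holds in the original space as well.

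There is essentially no obstacle. The one point to check is that the weighted norm is truly equivalent to the uniform norm, so that completeness is preserved. This is immediate because $T$ is finite.

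An alternative proof is also available. By induction one shows that $\|R^{n}(x)(t)-R^{n}(y)(t)\|\leq \frac{(L_{R}t)^{n}}{n!}\|x-y\|_{\infty}$. Hence some iterate $R^{n}$ is a contraction in the sup norm, and the standard corollary of Banach's theorem then gives a unique fixed point of $R$. The weighted-norm argument is shorter, so we use it.
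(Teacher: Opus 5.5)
Your proof is correct and is essentially the paper's argument. The paper does not prove this statement itself and cites \cite{MR3752610}. However, it proves the more general Proposition~\ref{prop:GeneralizedHistoryDependentFixedPoint}, of which this is the case $\ell=0$ with constant $L_R$, in the same way: it uses the equivalent weighted norm $\sup_{t} e^{-\gamma t}\|x(t)\|$, shows $R$ is a contraction in it, and applies Banach's theorem. The only difference is that H\"older's inequality takes the place of your direct bound $\int_0^t e^{\beta s}\,ds\le e^{\beta t}/\beta$.
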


In what follows, a generalized version of the previous concept is introduced.

\begin{definicion}[Generalized almost history-dependent operator]
    An operator $R\colon C([0,T];\mathcal{H})\to C([0,T];\mathcal{H})$ is called a Generalized Almost History-Dependent Operator if there exist $\ell\in[0,1)$ and $L_{R}\in L^{p}([0,T];\mathbb{R}_{+})$ with $p\in(1,\infty)$, such that for all $t\in[0,T]$, one has
    \begin{eqnarray*}
        \|R(x)(t) - R(y)(t)\| \leq \ell\|x(t)-y(t)\| + \int_{0}^{t} L_{R}(s) \|x(s) - y(s)\| ds\quad\forall x,y\in C([0,T];\mathcal{H}).
    \end{eqnarray*}
\end{definicion}

\begin{proposicion}[Generalized almost history-dependent fixed-point]
    \label{prop:GeneralizedHistoryDependentFixedPoint}
    Let $R\colon C([0,T];\mathcal{H})\to C([0,T];\mathcal{H})$ be an Generalized Almost History-Dependent Operator. Then, $R$ has an unique fixed point.
\end{proposicion}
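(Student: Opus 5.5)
The strategy is to show that $R$ is a strict contraction on $C([0,T];\mathcal{H})$ when this space is given a suitable weighted sup-norm. The Banach Fixed-Point Theorem then yields existence and uniqueness at once. Since the weighted norm will be equivalent to the usual sup-norm, the space remains complete.

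\textbf{Choice of norm.} For $\beta>0$ I will work with the Bielecki-type norm
\begin{equation*}
    \|x\|_{\beta} := \max_{t\in[0,T]} e^{-\beta \Lambda(t)}\|x(t)\|, \qquad \Lambda(t):=\int_{0}^{t} L_{R}(s)ds,
\end{equation*}
or alternatively with the simpler weight $e^{-\beta t}$. The function $\Lambda$ is absolutely continuous and bounded on $[0,T]$, because $L_{R}\in L^{p}\subset L^{1}$ on a bounded interval. It follows that $e^{-\beta\Lambda(T)}\|x\|_\infty\le \|x\|_\beta\le \|x\|_\infty$, so $\|\cdot\|_\beta$ is equivalent to the sup-norm and $(C([0,T];\mathcal{H}),\|\cdot\|_\beta)$ is a Banach space.

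\textbf{Key estimate.} Fix $x,y$ and $t\in[0,T]$. I will bound the integral term by
\begin{equation*}
    \int_{0}^{t} L_{R}(s)\|x(s)-y(s)\|ds \le \|x-y\|_\beta \int_0^t L_R(s)e^{\beta\Lambda(s)}ds = \frac{\|x-y\|_\beta}{\beta}\bigl(e^{\beta\Lambda(t)}-1\bigr).
\end{equation*}
The last equality holds because $\frac{d}{ds}e^{\beta\Lambda(s)}=\beta L_R(s)e^{\beta\Lambda(s)}$ a.e. and $e^{\beta\Lambda}$ is absolutely continuous, being the composition of a $C^1$ function with an absolutely continuous one. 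The first term is bounded by $\ell\|x(t)-y(t)\|\le \ell e^{\beta\Lambda(t)}\|x-y\|_\beta$. Multiplying the definition of the operator class by $e^{-\beta\Lambda(t)}$ and taking the maximum over $t$ then gives
\begin{equation*}
    \|R(x)-R(y)\|_\beta \le \Bigl(\ell+\tfrac{1}{\beta}\Bigr)\|x-y\|_\beta .
\end{equation*}

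\textbf{Conclusion.} Since $\ell<1$, I can choose $\beta>1/(1-\ell)$, which makes $\ell+1/\beta<1$. Banach's theorem then gives a unique fixed point. The same argument with $\ell=0$ and $L_R$ constant recovers Proposition \ref{prop:HistoryDependentFixedPoint}.

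\textbf{Main obstacle.} The only delicate point is that $L_R$ is merely integrable, not bounded. This is why the weight is built from $\Lambda$ rather than from $t$, and the chain rule for $e^{\beta\Lambda}$ must be justified for an absolutely continuous $\Lambda$. The hypothesis $p>1$ is not actually needed for this argument.

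If I used the weight $e^{-\beta t}$ instead, I would need Hölder's inequality with $L_R\in L^p$. That gives
\begin{equation*}
    \int_0^t L_R e^{\beta s}ds\le \|L_R\|_p\Bigl(\frac{e^{q\beta t}}{q\beta}\Bigr)^{1/q},
\end{equation*}
where $q$ is the conjugate exponent of $p$, so that the integral term contributes a factor $\|L_R\|_p(q\beta)^{-1/q}$. This factor tends to $0$ as $\beta\to\infty$, and that is where the assumption $p\in(1,\infty)$ would be used. I would present the $\Lambda$-weighted version and mention this Hölder variant as an alternative.
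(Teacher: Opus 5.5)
Your proof is correct, but your main route differs from the paper's. The paper uses the time-weighted norm $\|f\|_{\infty,\gamma}=\sup_{t}\|e^{-\gamma t}f(t)\|$ and bounds the memory term with Hölder's inequality, which gives the contraction constant $\ell+\|L_{R}\|_{L^{p}([0,T])}(\gamma q)^{-1/q}$; it then takes $\gamma$ large. That is exactly the variant you sketch at the end, and Hölder is the only place where $p>1$ enters. Your weight $e^{-\beta\Lambda(t)}$ is instead adapted to the kernel. Because $L_R\ge 0$ and $e^{\beta\Lambda}$ is absolutely continuous with derivative $\beta L_R e^{\beta\Lambda}$, the memory term integrates exactly. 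You therefore obtain the cleaner constant $\ell+1/\beta$, which does not depend on the size of $L_R$ (only the norm-equivalence constant $e^{-\beta\Lambda(T)}$ does), and you need only $L_R\in L^{1}$. This extra generality has a concrete use. In the proof of Theorem~\ref{teo:ExistenceOfEpsilonSolutions} the paper applies the proposition with $\ell=0$ and $L_R(\sigma)=\int_{t_{k-1}}^{T}L_g(s,\sigma)\,ds$, and your version would require this function to be merely integrable rather than square integrable. What the paper's approach offers is a weight that is independent of $R$. The price is the hypothesis $p>1$, or else an extra uniform-integrability argument to treat $p=1$ with the $e^{-\gamma t}$ weight.
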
  

\begin{proof}
    For $\gamma\geq0$, the norm $\|\cdot\|_{\infty,\gamma}$ on $C([0,T];\mathcal{H})$ is defined by $\|f\|_{\infty,\gamma} := \sup_{t\in[0,T]} \|e^{-\gamma t}f(t)\|$. This norm is equivalent to $\|\cdot\|_{\infty}$. Let $t\in[0,T]$ and $x,y \in C([0,T];\mathcal{H})$. Then,
    \begin{align*}
        \|R(x)(t) - R(y)(t)\| 
        &\leq 
        \ell\|x(t) - y(t)\| + \int_{0}^{t} L_{R}(s) \|x(s)-y(s)\|ds\qquad\forall t\in[0,T].
    \end{align*}
    Multiplying the previous inequality by $e^{-\gamma t}$, with $\gamma>0$, yields
    \begin{align*}
        e^{-\gamma t} \|R(x)(t) - R(y)(t)\| 
        &\leq 
        e^{-\gamma t}\ell\|x(t) - y(t)\| + e^{-\gamma t}\int_{0}^{t} L_{R}(s)\|x(s)-y(s)\|ds\\
        &\leq 
        \ell\|x-y\|_{\infty,\gamma}
        + e^{-\gamma t}\|x-y\|_{\infty,\gamma}\int_{0}^{t} L_{R}(s)e^{\gamma s}ds\\
        &\leq 
        \|x-y\|_{\infty,\gamma}
        \left(
            \ell + e^{-\gamma t}
            \left(\int_{0}^{t} L_{R}^{p}(s)ds\right)^{1/p}
            \left(\int_{0}^{t} e^{\gamma qs}ds\right)^{1/q}
        \right)\\
        &= 
        \|x-y\|_{\infty,\gamma}
        \left(
            \ell + e^{-\gamma t}
            \left(\int_{0}^{t} L_{R}^{p}(s)ds\right)^{1/p}
            \left(\frac{e^{\gamma qt}-1}{\gamma q}\right)^{1/q}
        \right)\\
        &=
        \|x-y\|_{\infty,\gamma}
        \left(
            \ell +
            \left(\int_{0}^{t} L_{R}^{p}(s)ds\right)^{1/p}
            \left(\frac{1-e^{-\gamma qt}}{\gamma q}\right)^{1/q}
        \right)\\
        &\leq
        \|x-y\|_{\infty,\gamma}
        \left(
            \ell + \|L_{R}\|_{L^{p}([0,T])}
            \left(\frac{1}{\gamma q}\right)^{1/q}
        \right)
        \qquad\forall t\in[0,T],
    \end{align*}
    where $1<p,q<\infty$ are Hölder conjugates. Consequently,
    \begin{align*}
        \|R(x)-R(y)\|_{\infty,\gamma}
        &\leq
        \left(
            \ell + \|L_{R}\|_{L^{p}([0,T])}
            \left(\frac{1}{\gamma q}\right)^{1/q}
        \right)\|x-y\|_{\infty,\gamma}.
    \end{align*}
    By choosing
    \begin{align*}
        \gamma>\frac{\|L_{R}\|_{L^{p}([0,T])}^{q}}{q(1-\ell)^{q}},
    \end{align*}
    it follows that
    \begin{align*}
        \ell + \|L_{R}\|_{L^{p}([0,T])}
        \left(\frac{1}{\gamma q}\right)^{1/q}<1.
    \end{align*}
    Therefore, $R$ is a contraction on the Banach space $(C([0,T];\mathcal{H});\|\cdot\|_{\infty,\gamma})$. Hence, the existence and uniqueness follow from Banach's Fixed Point Theorem (see \cite[Theorem 3.48]{MR2378491}).
\end{proof}

A fixed-point result for multi-valued mappings is now presented. This result will play a key role in the analysis of the multi-valued dynamics associated with differential inclusions.

\begin{proposicion}[Nadler fixed-point]
    \label{prop:NadlerFixedPoint}
    Let $(X,d)$ be a complete metric space and let 
    $S\colon X\rightrightarrows X$ be a multivalued mapping with nonempty, closed and bounded values. Assume that there exists $L_{S}\in[0,1)$ such that:
    \begin{equation*}
        \operatorname{Hauss}(S(x),S(y))\leq L_{S} d(x,y)
        \qquad\forall x,y\in X,
    \end{equation*}
    where $\operatorname{Hauss}$ denotes the Hausdorff distance induced by $d$. Then $S$ admits a fixed point, that is, there exists $\overline{x}\in X$ such that $\overline{x}\in S(\overline{x})$.
\end{proposicion}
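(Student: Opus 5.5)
We prove the statement, Proposition~\ref{prop:NadlerFixedPoint}, by building a Picard-type sequence of successive selections. The Hausdorff contraction lets us choose each new iterate close to the previous one, so the sequence is Cauchy. Its limit is then shown to be a fixed point using closedness of the values.

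\textbf{Construction.} If $L_{S}=0$, then $\operatorname{Hauss}(S(x),S(y))=0$ for all $x,y$. Since the values are closed, $S(x)=S(y)=:K$ for every $x$, and any $\overline{x}\in K$ is a fixed point. Assume now $L_{S}\in(0,1)$ and fix $q\in(L_{S},1)$, for instance $q=\sqrt{L_{S}}$. Pick any $x_{0}\in X$ and any $x_{1}\in S(x_{0})$. We define the sequence inductively. Given $x_{n}\in S(x_{n-1})$, we have
\begin{equation*}
    d(x_{n},S(x_{n}))\leq \operatorname{Hauss}(S(x_{n-1}),S(x_{n}))\leq L_{S}\,d(x_{n-1},x_{n}).
\end{equation*}
We then choose $x_{n+1}\in S(x_{n})$ with $d(x_{n},x_{n+1})\leq q\,d(x_{n-1},x_{n})$. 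This choice is possible by the definition of the infimum: if $d(x_{n-1},x_{n})>0$, then $L_{S}d(x_{n-1},x_{n})<q\,d(x_{n-1},x_{n})$, leaving room for a point strictly below the bound. If instead $d(x_{n-1},x_{n})=0$, then $x_{n}=x_{n-1}$, so $x_{n}\in S(x_{n})$ and we stop, having found a fixed point. Boundedness of the values guarantees that $\operatorname{Hauss}$ is finite.

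\textbf{Cauchy property.} By induction, $d(x_{n},x_{n+1})\leq q^{n}d(x_{0},x_{1})$. Hence for $m>n$,
\begin{equation*}
    d(x_{n},x_{m})\leq \frac{q^{n}}{1-q}\,d(x_{0},x_{1}),
\end{equation*}
so $(x_{n})$ is Cauchy. By completeness of $X$ it converges to some $\overline{x}\in X$.

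\textbf{Passage to the limit.} Since $x_{n+1}\in S(x_{n})$,
\begin{equation*}
    d(\overline{x},S(\overline{x}))\leq d(\overline{x},x_{n+1})+d(x_{n+1},S(\overline{x}))\leq d(\overline{x},x_{n+1})+L_{S}\,d(x_{n},\overline{x}),
\end{equation*}
and the right-hand side tends to $0$. Thus $d(\overline{x},S(\overline{x}))=0$. Because $S(\overline{x})$ is closed, $\overline{x}\in S(\overline{x})$.

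\textbf{Main obstacle.} The delicate step is the selection step: the Hausdorff bound controls only the distance to $S(x_{n})$, not the distance to an actual point of it. Without compactness the infimum need not be attained, which is why we enlarge the contraction factor from $L_{S}$ to some $q\in(L_{S},1)$. This keeps the ratio strictly below $1$ while always leaving a point of $S(x_{n})$ that we can select.
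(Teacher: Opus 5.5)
Your proof is correct and is the standard successive-approximate-selection argument behind Nadler's theorem, which the paper does not reprove but cites from \cite{MR254828}: choose $x_{n+1}\in S(x_n)$ nearly realizing $d(x_n,S(x_n))$, derive a geometric Cauchy estimate, and pass to the limit using closedness of $S(\overline{x})$. The only difference is cosmetic: Nadler's original proof uses an additive summable slack, $d(x_n,x_{n+1})\leq \operatorname{Hauss}(S(x_{n-1}),S(x_n))+L_S^{n}$, whereas your multiplicative slack $q\in(L_S,1)$, with the degenerate case $d(x_{n-1},x_n)=0$ handled separately, is the same device the paper uses (a factor $\alpha>1$) in its remark on the Picard-type iterative scheme after the proof of the main theorem.
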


For further details on these fixed-point results, the reader is referred to \cite{MR3752610} and \cite{MR254828}, where the proofs of Propositions \ref{prop:HistoryDependentFixedPoint} and \ref{prop:NadlerFixedPoint} can be found, respectively. This section is concluded with the following compactness result.

\begin{lema}[Compactness results, \cite{MR3626639}]
    \label{compactness}
     Let $(x_n)_n$ be a sequence of absolutely continuous functions from $[0,T]$ into $\mathcal{H}$ with $x_n(0)=x_0$.  Assume that for all $n\in \mathbb{N}$
    \begin{equation}\label{acotamiento}
        \begin{aligned}
            \Vert \dot{x}_n(t)\Vert &\leq \psi(t) \qquad\textrm{ a.e. } t\in [0,T],
        \end{aligned}
    \end{equation}
    where $\psi\in L^1(0,T)$ and that $x_0^n \to x_0$ as $n\to \infty$. Then, there exists a subsequence $(x_{n_{k}})_{k}$ of $(x_{n})_{n}$ and $x\in W^{1,1}([0,T];\mathcal{H})$ such that
    \begin{enumerate}
        \item  $x_{n_{k}}(t)\rightharpoonup x(t)$ in $\mathcal{H}$ as $k\to +\infty$ for all $t\in [0,T]$.
        \item $x_{n_{k}}\rightharpoonup x$ in $L^{1}\left([0,T];\mathcal{H}\right)$ as $k\to +\infty$.
        \item $\dot{x}_{n_{k}}\rightharpoonup \dot{x}$ in $L^{1}\left([0,T];\mathcal{H}\right)$ as $k\to +\infty$.
        \item $\|\dot{x}(t)\| \leq \psi(t)$ a.e. $t\in [0,T]$.
    \end{enumerate}
\end{lema}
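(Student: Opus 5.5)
The idea is to combine a uniform derivative bound with reflexivity of $\mathcal{H}$ and the Dunford--Pettis criterion. First I will show the family $(\dot{x}_n)_n$ is relatively weakly compact in $L^1([0,T];\mathcal{H})$. The domination $\|\dot{x}_n(t)\|\le\psi(t)$ with $\psi\in L^1(0,T)$ makes $(\dot{x}_n)$ bounded and uniformly integrable: for every measurable $E$, $\int_E\|\dot{x}_n\|\le\int_E\psi$, which tends to $0$ uniformly in $n$ as $|E|\to0$. Moreover, for a.e. $t$ the set $\{\dot{x}_n(t)\}_n$ lies in the ball $\psi(t)\mathbb{B}$, which is weakly compact since $\mathcal{H}$ is a Hilbert space. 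By the Diestel--Ruess--Schachermayer version of Dunford--Pettis for Bochner spaces, bounded uniformly integrable sets in $L^1([0,T];\mathcal{H})$ with $\mathcal{H}$ reflexive are relatively weakly compact. By Eberlein--\v{S}mulian, some subsequence satisfies $\dot{x}_{n_k}\rightharpoonup v$ in $L^1([0,T];\mathcal{H})$.

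Next I define $x(t):=x_0+\int_0^t v(s)\,ds$. The hypothesis is read as: the initial values $x_n(0)=x_0^n$ converge to $x_0$. Then $x\in W^{1,1}([0,T];\mathcal{H})$ with $\dot{x}=v$, which gives item 3.

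For item 1, fix $t\in[0,T]$ and $y\in\mathcal{H}$. The functional $u\mapsto\int_0^t\langle y,u(s)\rangle ds$ is continuous and linear on $L^1([0,T];\mathcal{H})$, since $\mathbf{1}_{[0,t]}y\in L^\infty$. Hence
\begin{equation*}
\langle y,x_{n_k}(t)\rangle=\langle y,x_0^{n_k}\rangle+\int_0^t\langle y,\dot{x}_{n_k}(s)\rangle ds\longrightarrow\langle y,x(t)\rangle .
\end{equation*}
This gives pointwise weak convergence on the whole interval, with no further extraction.

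For item 2, take $\varphi\in L^\infty([0,T];\mathcal{H})$, the dual of $L^1([0,T];\mathcal{H})$ because $\mathcal{H}$ has the Radon--Nikod\'ym property. The functions $x_{n_k}(t)$ are bounded uniformly by $\sup_n\|x_0^n\|+\|\psi\|_{L^1}$. The scalar functions $t\mapsto\langle\varphi(t),x_{n_k}(t)\rangle$ converge pointwise by item 1 and are dominated by a constant times $\|\varphi(t)\|$. Lebesgue's dominated convergence theorem then yields $x_{n_k}\rightharpoonup x$ in $L^1$. To handle $\varphi$ that is only weakly measurable, I can first test against simple functions and then use density together with the uniform bound.

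For item 4, the set $K:=\{u\in L^1([0,T];\mathcal{H}):\|u(t)\|\le\psi(t)\text{ a.e.}\}$ is convex and strongly closed, because strong $L^1$ limits have a.e.-convergent subsequences. By Mazur's lemma, $K$ is therefore weakly closed. Since $\dot{x}_{n_k}\in K$, it follows that $\dot{x}\in K$.

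The main obstacle is the first step, namely invoking the correct vector-valued Dunford--Pettis theorem. If I prefer to avoid it, I would work in $L^2$ with weight $\psi$. Set $w_n:=\dot{x}_n/\sqrt{\psi}$ on $\{\psi>0\}$ and $w_n:=0$ elsewhere. Then $\|w_n\|_{L^2}^2\le\|\psi\|_{L^1}$, so weak compactness in the Hilbert space $L^2([0,T];\mathcal{H})$ gives $w_{n_k}\rightharpoonup w$. Setting $v:=\sqrt{\psi}\,w$, weak $L^1$ convergence $\dot{x}_{n_k}\rightharpoonup v$ follows: for $\varphi\in L^\infty$, the product $\sqrt{\psi}\,\varphi$ belongs to $L^2$, so $\int\langle\varphi,\dot{x}_{n_k}\rangle=\int\langle\sqrt{\psi}\,\varphi,w_{n_k}\rangle\to\int\langle\sqrt{\psi}\,\varphi,w\rangle=\int\langle\varphi,v\rangle$.
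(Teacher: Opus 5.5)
Your proof is correct. Domination by $\psi$ gives weak sequential compactness of $(\dot x_n)_n$ in $L^1([0,T];\mathcal{H})$; your weighted device $w_n=\dot x_n/\sqrt{\psi}$ in $L^2$ is a clean way to avoid the vector-valued Dunford--Pettis theorem. Integrating the weak limit, testing against $\mathbf{1}_{[0,t]}y$, applying dominated convergence, and using Mazur's lemma then give items 1--4. Your reading $x_n(0)=x_0^n\to x_0$ also covers the literal statement as the constant case.

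The paper does not prove this lemma; it imports it from the cited reference. Your argument is the standard Dunford--Pettis-plus-Mazur proof of such results.
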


\section{Standing Assumptions and Some Consequences}\label{section:DynamicalSystemAndStandingAssumptions}

 The following class of maximal monotone differential inclusions is considered:
\begin{align}
    \label{eq:DynamicWithNonLocalInitialCondition}
    \begin{cases}
        \displaystyle\dot{x}(t) \in -A(t)x(t) + F(t,x(t)) + \int_{0}^{t} g(t,s,x(s))ds \quad\textrm{a.e. }t\in[0,T],\\
        x(0) = h(x(\cdot)) \in \overline{D(A(0))}. 
    \end{cases}
\end{align}
where for each $\in [0,T]$, $A(t)\colon D(A(t))\subset \mathcal{H}\rightrightarrows\mathcal{H}$ is a set-valued maximal monotone operator, $F\colon[0,T]\times\mathcal{H}\rightrightarrows\mathcal{H}$ is a multivalued mapping, $g\colon[0,T]\times[0,T]\times\mathcal{H}\to\mathcal{H}$ is a single-valued mapping such that modeling a process with memory, and $h\colon C([0,T];\mathcal{H}) \longrightarrow\overline{D(A(0))}$ is a mapping. The following assumptions on the data are considered throughout this article:

\begin{enumerate} 

    \item[$(\mathcal{H}_{A})$] The operator $A$ satisfies the following hypotheses:
    \begin{enumerate}[1.]
        \item {\it Vladimirov's absolute continuity condition:}
        \begin{eqnarray}
            \label{eq:vlad}
           \operatorname{dis}(A(t), A(s))\leq \vert a(t)-a(s)\vert \quad \forall t, s \in [0, T], 
        \end{eqnarray}
        for some absolutely continuous function $a\colon[0,T]\to\mathbb{R}$. Here $\operatorname{dis}(A,B)$ denotes the Vladimirov's (see \cite{MR1092799}, \cite{MR1124122}) pseudo-distance defined on the class of maximal monotone operators by:
        \begin{align*}
            \operatorname{dis}(A_{1},A_{2}) := \sup\left\{\frac{\langle y_{1}-y_{2},x_{2}-x_{1}\rangle}{1+\|y_{1}\|+\|y_{2}\|} : (x_{1},y_{1})\in\operatorname{gr}(A_{1}),\ (x_{2},y_{2})\in\operatorname{gr}(A_{2})\right\}.
        \end{align*}
        \item {\it Linear growth condition:}  There exists $c(\cdot)\in L^{1}([0,T];\mathbb{R}_{+})$ such that for a.e. $t\in[0,T]$,
        \begin{eqnarray*}
            \label{eq:LGCT}
            d(0, A(t)(x)) \leq c(t)(1+\Vert x\Vert) \quad \forall  x\in D(A(t)).  
        \end{eqnarray*}
    \end{enumerate}
    
    \item[$(\mathcal{H}_{F})$] The multivalued mapping $F\colon[0,T]\times\mathcal{H}\rightrightarrows\mathcal{H}$ has nonempty, convex and weakly compact values in $\mathcal{H}$, and satisfies the next two conditions:
    \begin{enumerate}[1.]
    \item The set-valued map $t\rightrightarrows \hbox{Gph}F(t,\cdot)$ is measurable.
        \item For a.e. $t\in [0,T]$, $\operatorname{Gph}F(t,\cdot)$ is norm$\times$weak-closed.
    \end{enumerate}
\end{enumerate}

\begin{enumerate}
    \item[$(\mathcal{H}_{F}^{3})$] \textit{Linear growth condition:} There exists $l_{F}\in L^{1}([0,T])$ such that:
    \begin{equation*}
        \|F(t,x)\|_{\max} := \sup\{\|y\| : y \in F(t,x)\} \leq l_{F}(t)(1+\|x\|) \qquad\textrm{a.e. }t\in[0,T],\ \forall x\in\overline{D(A(t))}.
    \end{equation*}
    
    \item[$(\mathcal{H}_{F}^{4})$] \textit{OSL condition:} For a.e. $t\in[0,T]$, $x\rightrightarrows F(t,x)$ satisfies the one-sided Lipschitz condition with constant $L_{F}\in L^{1}([0,T];\mathbb{R}_{+})$, that is:
    \begin{equation*}
        H_{F}(t,x,x-y) - H_{F}(t,y,x-y) \leq L_{F}(t) \|x-y\|^{2} \quad\textrm{a.e. }t\in[0,T],\ \forall x,y\in\overline{D(A(t))}.
    \end{equation*}
    
    \item[$(\mathcal{H}_{g})$] The mapping $g\colon[0,T]\times[0,T]\times\mathcal{H}\to\mathcal{H}$ satisfies the next conditions:
    \begin{enumerate}[1.]
        \item The function $(t,s)\in[0,T]^{2}\mapsto g(t,s,x)$ is measurable for each $x\in\mathcal{H}$.
        \item There exists $L_{g}\in L^{2}(D)$ such that for a.e. $(t,s)\in D$:
        \begin{equation*}
            \|g(t,s,x)- g(t,s,y)\| \leq L_{g}(t,s)\|x-y\|\qquad\forall x, y\in \overline{D(A(s))},
        \end{equation*}
        where $D:=\{(t,s)\in[0,T]^{2}:0\leq s\leq t\leq T\}$.
        \item \textit{Linear growth condition:} There exists $l_{g}\in L^{1}(D)$ such that for a.e. $(t,s)\in D$:
        \begin{equation*}
            \|g(t,s,x)\|\leq l_{g}(t,s)(1+\|x\|)\qquad\forall x\in\overline{D(A(s))}.
        \end{equation*}
    \end{enumerate}

    \item[$(\mathcal{H}_{h})$] The mapping $h\colon C([0,T], \mathcal{H}) \longrightarrow \overline{D(A(0))}$ is $L_{h}$-Lipchitz.
\end{enumerate}

\begin{ejemplo}
    Some standard examples of nonlocal initial conditions satisfying $(\mathcal{H}_{h})$ are presented below:
    \begin{enumerate}
        \item $h(x(\cdot)) = x_{0}$ is the general Cauchy condition, for some $x_{0}\in \overline{D(A)}$.
        \item $h(x(\cdot)) = \pm x(T)$ is the periodic/antiperiodic initial condition.
        \item $h(x(\cdot)) = \frac{1}{T}\int_{0}^{T} x(s)ds$ is the mean value initial condition.
        \item $h(x(\cdot))=\sum_{i=1}^{n} \alpha_{i}x(t_{i})$ with $\alpha_{1},\dots,\alpha_{n}\in\mathbb{R}$ such that $\sum_{i=1}^{n}|\alpha_{i}|\leq 1$ and $0\leq t_{1}<\cdots<t_{n}\leq T$, is the multipoint initial condition.
    \end{enumerate}
\end{ejemplo}



\paragraph{Consequences of $(\mathcal{H}_{A})$:} Some results that follow from the hypothesis $(\mathcal{H}_{A})$ are presented below. 




The hypotheses on the operator $A$ ensure the following existence result. 
\begin{teorema}[Kunze and Monteiro-Marquez, \cite{MR1451848}]
    \label{Kunze1}
    Let $A(t)\colon D(A(t))\subseteq\mathcal{H}\rightrightarrows\mathcal{H}$ be a maximal monotone operator for all $t\in[0,T]$.  Then, under $(\mathcal{H}_{A})$, for all $x_{0}\in \overline{D(A(0))}$, there exists a unique absolutely continuous function $x\colon[0,T]\to\mathcal{H}$ satisfying:
    \begin{align}\label{eq:evolution-equation}
        \begin{cases}
            \dot{x}(t)\in -A(t)x(t) \quad\textrm{a.e. }t\in[0,T],\\
            x(0)=x_{0}.
        \end{cases}
    \end{align}
    Moreover, there exists a constant $K>0$, depending (continuously) only on the parameters $T$, $x_0$, $c(\cdot) \in L^{1}([0,T];\mathbb{R})$ and $\dot{a}(\cdot)\in L^{1}([0,T];\mathbb{R})$, such that
    \begin{equation}
        \label{eq:estimex}
        \Vert x(t)-x(s)\Vert
        \leq K\int_{s}^{t} \big( c(\tau) + |\dot{a}(\tau)|\big) d\tau,
        \qquad 0\leq s\leq t\leq T.
    \end{equation}
    Such a solution will be denoted by $x(x_{0},f)(\cdot)$.
\end{teorema}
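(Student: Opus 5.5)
Uniqueness comes first, from monotonicity. If $x,y$ are two absolutely continuous solutions of \eqref{eq:evolution-equation} with the same initial datum $x_0$, then for a.e. $t$ we have $-\dot x(t)\in A(t)x(t)$ and $-\dot y(t)\in A(t)y(t)$. Monotonicity of $A(t)$ gives $\frac{d}{dt}\tfrac12\|x(t)-y(t)\|^2=\langle \dot x-\dot y,x-y\rangle\le 0$. Hence $\|x(t)-y(t)\|\le \|x(0)-y(0)\|=0$, which also shows that the solution map $x_0\mapsto x(\cdot)$ is nonexpansive. This property will be reused in the density step.

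For existence, I would follow the Kunze--Monteiro-Marques catching-up/time-discretization scheme and argue first for $x_0\in D(A(0))$. Fix a partition $t_i^n=iT/n$ and define the implicit Euler iterates $x_{i+1}^n=(I+h_iA(t_{i+1}^n))^{-1}x_i^n$, where $h_i=t_{i+1}^n-t_i^n$. Maximal monotonicity (Minty) makes these well defined. The key a priori estimate uses the Vladimirov condition \eqref{eq:vlad}. Pair $(x_{i+1}^n,y_{i+1})$, with $y_{i+1}=(x_i^n-x_{i+1}^n)/h_i\in A(t_{i+1})x_{i+1}^n$, against a point of $\operatorname{gr}A(t_i)$: namely $x_i^n$ with $y_i$ chosen from the previous step, or the minimal-norm element at the start. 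The definition of $\operatorname{dis}$ then yields
\begin{equation*}
\|y_{i+1}\|^2\le \|y_{i+1}\|\|y_i\|+|a(t_{i+1})-a(t_i)|\,(1+\|y_i\|+\|y_{i+1}\|)/h_i .
\end{equation*}
Combined with the growth bound $d(0,A(t)x)\le c(t)(1+\|x\|)$, which controls the fresh elements of $A$, this gives a discrete Gr\"onwall-type bound (Lemma \ref{lem:DifferentialGronwallInequality} in discrete form). The outcome is a bound on $\|x_{i+1}^n-x_i^n\|$ by $K\int_{t_i}^{t_{i+1}}(c+|\dot a|)$, with $K$ depending only on $T$, $\|x_0\|$, $\|c\|_{L^1}$ and $\|\dot a\|_{L^1}$. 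In particular, the piecewise linear interpolants $x_n$ satisfy $\|\dot x_n(t)\|\le K(c(t)+|\dot a(t)|)=:\psi(t)$.

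Next comes the passage to the limit. By Lemma \ref{compactness}, a subsequence satisfies $x_n(t)\rightharpoonup x(t)$ and $\dot x_n\rightharpoonup \dot x$ in $L^1$, with $\|\dot x\|\le\psi$; this already gives \eqref{eq:estimex}. To upgrade to strong convergence, I would show that $(x_n)$ is Cauchy in $C([0,T];\mathcal H)$. For this, compare two discretizations through the monotonicity inequality, using $\operatorname{dis}$ again to absorb the time mismatch $|a(\theta_n(t))-a(\theta_m(t))|$, which tends to zero uniformly by absolute continuity of $a$. With strong convergence in hand, the inclusion $-\dot x(t)\in A(t)x(t)$ follows by a standard argument. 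For every $(z,w)$ with $w\in A(t)z$, pass to the limit in $\int\langle -\dot x_n(s)-w(s),x_n(s)-z(s)\rangle\ge -(\text{Vladimirov error})$, tested on arbitrary measurable subsets. Then invoke maximality of $A(t)$, using Minty's characterization together with the weak$\times$strong closedness of its graph.

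Finally, for a general $x_0\in\overline{D(A(0))}$, approximate $x_0$ by points $x_0^k\in D(A(0))$. The nonexpansiveness from the uniqueness step shows that the corresponding solutions form a Cauchy sequence uniformly. The constants $K$ stay bounded because they depend continuously on $\|x_0^k\|$. Passing to the limit through the same graph-closedness argument gives a solution satisfying \eqref{eq:estimex}. The main obstacle is the discrete a priori bound on $\|y_{i+1}\|$ derived from the Vladimirov pseudo-distance, since that is where absolute continuity of $a$ must be converted into an $L^1$ velocity bound. If the direct recursion fails, my first fallback would be to work with the Yosida regularization $A_\lambda(t)$ and derive the analogous estimate for $\|A_\lambda(t)x_\lambda(t)\|$ instead.
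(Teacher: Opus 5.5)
Your architecture (implicit Euler steps, a uniform Cauchy estimate, Minty-type identification of the limit, then density in $\overline{D(A(0))}$) is a legitimate alternative to the paper's. The paper instead runs the autonomous semigroup of $A(t_i^n)$ on each subinterval, jumps into $D(A(t_i^n))$ at cost $\le 2\alpha_i^n$ using $d(x,D(A(t_i^n)))\le \operatorname{dis}(A(t_{i-1}^n),A(t_i^n))$, and concludes by BV compactness.

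\textbf{The gap.} It is the step you flag yourself: the a priori bound does not follow from the recursion you wrote. Set $\alpha_i:=|a(t_{i+1})-a(t_i)|$ and $\beta_i:=\alpha_i/h_i$. Your displayed inequality is correct, namely $\|y_{i+1}\|^2-\|y_i\|\|y_{i+1}\|\le \beta_i(1+\|y_i\|+\|y_{i+1}\|)$, but it is too weak. It is compatible with $\|y_{i+1}\|=\|y_i\|+\beta_i$ at every step. Iterating with uniform steps $h=T/n$, the best it gives is $\|y_i\|\lesssim \|y_0\|+h^{-1}\int_0^{t_i}|\dot a|$, i.e.\ velocities of order $n$ and no $L^1$ bound. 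It also makes the constant depend on $\|y_0\|=\|A^0(0)x_0\|$. That quantity blows up along any $x_0^k\to x_0\in\overline{D(A(0))}\setminus D(A(0))$, by strong$\times$weak closedness of the graph, so your density step would break too.

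\textbf{How to repair it.} The growth condition must be used at every step, not only at the start. Compare with $(x_i^n,A^0(t_i)x_i^n)$ instead, and set $D_i:=\|x_{i+1}^n-x_i^n\|$ and $W_i:=\|A^0(t_i)x_i^n\|\le c(t_i)(1+\|x_i^n\|)$. You get $D_i^2\le (h_iW_i+\alpha_i)D_i+h_i\alpha_i(1+W_i)$, hence $D_i\le \tfrac32(h_iW_i+\alpha_i)+\tfrac12 h_i$. The stray $h_i$ comes from the $1$ in the denominator of $\operatorname{dis}$ and spoils the exact form of \eqref{eq:estimex}. It disappears if you take $p\in D(A(t_{i+1}))$ with $\|p-x_i^n\|\le \alpha_i+\eta$ and use nonexpansiveness of the resolvent. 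This gives $D_i\le 2\|p-x_i^n\|+h_i\|A^0(t_{i+1})p\|\le 2(\alpha_i+\eta)+h_i c(t_{i+1})(1+\|x_i^n\|+\alpha_i+\eta)$. That is exactly the mechanism of the paper's scheme (jump controlled by $\operatorname{dis}$, motion controlled by the minimal section), transplanted to the resolvent.

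\textbf{The grid.} The uniform grid $t_i^n=iT/n$ is not admissible for $c\in L^1$. The growth bound holds only a.e., a countable grid can lie entirely in the exceptional null set, and $\sum_i h_i c(t_i^n)$ need not approximate $\int c$. The nodes must be chosen as in Lemma \ref{lemma:Partition}, with the exceptional set of the growth condition added to the null set $C$; this is precisely why the paper uses that lemma.

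\textbf{Compactness.} A per-step bound controls the slope of the interpolant only by the step function $c_n$ and by step averages of $|\dot a|$, not pointwise by a fixed $\psi$. So Lemma \ref{compactness} does not apply verbatim. What you actually use is uniform integrability of these dominating functions, and that suffices for Dunford--Pettis and for your Cauchy estimate.

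\textbf{What your route buys.} With these repairs your route has a real advantage over the paper's. The uniform Cauchy estimate plus Minty's argument identifies the limit as a solution of \eqref{eq:evolution-equation}. The paper's Step 4 only obtains $u_n(t)\rightharpoonup u(t)$ together with the bound on $\dot u$, and never verifies $-\dot u(t)\in A(t)u(t)$.
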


\begin{remark} 
    We must emphasize that the estimate \eqref{eq:estimex} does not appear  in Theorem 3  of \cite{MR1451848}, but it is contained in their proof which was given for $c(t)=c$ for all $t\in [0, T]$. We will give a proof in the Appendix for $c(\cdot)\in L^{1}$.    
\end{remark}

In fact, the authors \cite{MR1451848} established Theorem \ref{Kunze1} with $L^{1}$-perturbation dependent only on $t$ without stating the estimate similarly to \eqref{eq:estimex}. The following result is equivalent to that of Theorem \ref{Kunze1}.

\begin{corolario}
    \label{Kunze3}  
    Let $A(t)\colon D(A(t))\subseteq\mathcal{H}\rightrightarrows\mathcal{H}$ be a maximal monotone operator for every $t\in[0,T]$.  Then, under $(\mathcal{H}_{A})$, for every $f(\cdot)\in L^{1}([0,T];\mathcal{H})$ and  every $x_{0}\in \overline{D(A(0))}$, there exists a unique absolutely continuous function $x\colon[0,T]\to\mathcal{H}$ satisfying:
    \begin{align*}
        \begin{cases}
            \dot{x}(t)\in -A(t)x(t)+f(t)\quad\textrm{a.e. }t\in[0,T],\\
            x(0)=x_{0}.
        \end{cases}
    \end{align*}
    Moreover, there exists a constant $K>0$, depending (continuously) only on $T$, $x_{0}$ and $\dot{a}(\cdot),c(\cdot)\in L^1([0,T];\mathbb{R})$, such that:
    \begin{equation*}
        \label{eq:estimex2}
        \|x(t)-x(s)\|
        \leq K\big(1 + \|f\|_{L^{1}([0,T])}\big) \int_{s}^{t} \big(c(\tau)+\Vert \dot{a}(\tau)\Vert+ \Vert f(\tau)\Vert \big) d\tau, \qquad 0\leq s\leq t\leq T.
    \end{equation*} 
\end{corolario}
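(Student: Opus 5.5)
We reduce Corollary \ref{Kunze3} to Theorem \ref{Kunze1} by absorbing the single-valued perturbation $f$ into a new family of operators. For $t\in[0,T]$ set
\begin{align*}
    B(t)x := A(t)x - f(t), \qquad D(B(t))=D(A(t)).
\end{align*}
A translate of a maximal monotone operator by a constant vector is maximal monotone, and $\dot x\in -B(t)x$ is exactly $\dot x\in -A(t)x+f(t)$. So it suffices to check that $B$ satisfies $(\mathcal{H}_{A})$, with new data $\tilde a,\tilde c$ controlled by $a$, $c$ and $f$. Theorem \ref{Kunze1} then gives existence, uniqueness and estimate \eqref{eq:estimex} for $B$, which we translate back.

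\textbf{Linear growth.} We have $d(0,B(t)x)\le d(0,A(t)x)+\|f(t)\|\le (c(t)+\|f(t)\|)(1+\|x\|)$. Hence $\tilde c:=c+\|f\|\in L^1$ works.

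\textbf{Vladimirov distance.} Take $(x_i,y_i)\in\operatorname{gr}A(t_i)$, so that $(x_i,y_i-f(t_i))\in\operatorname{gr}B(t_i)$. Then
\begin{align*}
    \frac{\langle y_1-y_2-(f(t_1)-f(t_2)),x_2-x_1\rangle}{1+\|y_1-f(t_1)\|+\|y_2-f(t_2)\|}.
\end{align*}
Two difficulties appear. The denominator changes, which costs a factor $1+\|f(t_1)\|+\|f(t_2)\|$. The new term $\langle f(t_1)-f(t_2),x_2-x_1\rangle$ is not controlled by $|\tilde a(t)-\tilde a(s)|$ unless $f$ is absolutely continuous and $x$ stays bounded.

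\textbf{Main obstacle and how we get around it.} Because of this term, we first prove the result for $f\in W^{1,1}([0,T];\mathcal{H})$, or even for step or smooth $f$. On bounded sets this gives a Vladimirov-type bound with $\tilde a$ built from $a$, $\|\dot f\|$ and $\|f\|_\infty$. A priori bounds on $\|x\|_\infty$ come from monotonicity and Lemma \ref{lem:DifferentialGronwallInequality} applied to $\|x(t)-z(t)\|^2$, where $z$ is the solution of Theorem \ref{Kunze1}.

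Tracking constants this way does not yield the stated form of the estimate. A cleaner route is to redo the proof of \eqref{eq:estimex}, given in the Appendix, with $f$ present:
\begin{itemize}
    \item Monotonicity between the solution and a reference point of $\operatorname{gr}A(t)$, chosen via the linear growth, gives $\frac{d}{dt}\frac12\|x-z\|^2\le\langle f,x-z\rangle+\cdots$.
    \item This yields a bound on $\|\dot x(t)\|$ by $K(1+\|f\|_{L^1})(c(t)+|\dot a(t)|+\|f(t)\|)$. The factor $1+\|f\|_{L^1}$ comes from the Gronwall bound on $\sup\|x\|$.
    \item Integrating this bound gives the estimate of Corollary \ref{Kunze3}.
\end{itemize}

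\textbf{Passing to general $f$.} For general $f\in L^1$, approximate $f$ in $L^1$ by smooth $f_n$. Monotonicity gives the contraction
\begin{align*}
    \|x_n(t)-x_m(t)\|\le\|f_n-f_m\|_{L^1},
\end{align*}
so $(x_n)$ is uniformly Cauchy. The uniform derivative bounds, which are equi-integrable since $f_n\to f$ in $L^1$, let us pass to the limit via Lemma \ref{compactness}. The inclusion is preserved because $A(t)$ is strong$\times$weak closed and maximal monotone, by the standard Minty/Mazur argument. Uniqueness follows from the same contraction inequality with $f_n=f_m=f$.
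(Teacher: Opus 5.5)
The missing idea is to shift the \emph{state} rather than the \emph{value}. Put $\Phi(t):=\int_0^t f(s)\,ds$, $y:=x-\Phi$ and $B(t)y:=A(t)\bigl(y+\Phi(t)\bigr)$ with $D(B(t))=D(A(t))-\Phi(t)$. Then $\dot x\in -A(t)x+f(t)$ is equivalent to $\dot y\in -B(t)y$, and $y(0)=x_0\in\overline{D(B(0))}$ since $\Phi(0)=0$. For $(y_i,z_i)\in\operatorname{gr}B(t_i)$ one has $(y_i+\Phi(t_i),z_i)\in\operatorname{gr}A(t_i)$, hence
\[
\frac{\langle z_1-z_2,y_2-y_1\rangle}{1+\|z_1\|+\|z_2\|}
=\frac{\langle z_1-z_2,(y_2+\Phi(t_2))-(y_1+\Phi(t_1))\rangle+\langle z_1-z_2,\Phi(t_1)-\Phi(t_2)\rangle}{1+\|z_1\|+\|z_2\|}
\le |a(t_1)-a(t_2)|+\Bigl|\int_{t_1}^{t_2}\|f(\tau)\|d\tau\Bigr|.
\]
Also $d(0,B(t)y)\le c(t)(1+\|y\|+\|\Phi(t)\|)\le(1+\|f\|_{L^1})c(t)(1+\|y\|)$. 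Thus $B$ satisfies $(\mathcal{H}_A)$ with $\tilde a(t):=\int_0^t(|\dot a|+\|f\|)\,d\tau$ and $(1+\|f\|_{L^1})c$. Theorem~\ref{Kunze1} then applies directly for every $f\in L^1$. Existence, uniqueness and the estimate (adding $\int_s^t\|f\|$ for $x=y+\Phi$) follow with no smoothing and no density step.

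This is precisely what your $A(t)-f(t)$ lacks. After the state shift, the perturbation is paired with $z_1-z_2$, which the denominator $1+\|z_1\|+\|z_2\|$ normalizes. In your value shift it is paired with $x_2-x_1$, which ranges over possibly unbounded domains and is not normalized. That is why your first route fails even for smooth $f$. A Vladimirov bound valid only on bounded sets does not meet the global hypothesis of Theorem~\ref{Kunze1}, so the theorem cannot be invoked there.

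Your fallback, ``redo the Appendix with $f$ present'', is where the whole proof would have to happen, and it is only asserted. Comparing with the solution $z$ of $\dot z\in -A(t)z$ by monotonicity gives $\|x-z\|_\infty\le\|f\|_{L^1}$, which is a bound on the state. It does not give the pointwise bound $\|\dot x(t)\|\le K(1+\|f\|_{L^1})(c(t)+|\dot a(t)|+\|f(t)\|)$. In the nonautonomous case such a bound does not follow from a state bound alone: for $A(t)=N_{C(t)}$ one has $A^0(t)x=0$, yet $\|\dot x\|$ is driven by $|\dot a|$.

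You need that velocity bound both for existence of an absolutely continuous solution and for the equi-integrability in your density step. It comes only from the full discretization:
\begin{itemize}
\item frozen problems $\dot u\in -A(t_i^n)u+f$ solved by Brezis' theory, which needs $f\in W^{1,1}$ and data in $D(A(t_i^n))$ to give $\|\tfrac{d^+u}{dt}\|\le\|f\|+c(t_i^n)(1+\|u\|)$;
\item jumps at the nodes controlled through the Vladimirov condition;
\item BV compactness;
\item a passage to the limit in the inclusion.
\end{itemize}
None of this is carried out, so the existence step for smooth $f$, on which the rest of your argument rests, is missing.

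Carried out in full, your route would actually give the estimate with $K$ independent of $f$, since the Gronwall bound on $\sup\|x\|$ is affine in $\|f\|_{L^1}$. The state shift fed through Theorem~\ref{Kunze1} with growth function $(1+\|f\|_{L^1})c$ produces a constant that still depends on $\|f\|_{L^1}$. But your route is a full re-proof of the theorem rather than a two-line reduction.
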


\begin{proof} 
    It suffices to consider, for each $t\in [0,T]$, the maximal monotone operator $B(t)\colon D(B(t))\subset \mathcal{H}\rightrightarrows \mathcal{H}$ defined by $B(t)x = A(t)\Big(x+\int_{0}^{t} f(s) ds\Big)$, where $D(B(t)) := D(A(t))-\int_{0}^{t} f(s)ds$, and to see that for all $s,t\in [0, T] $, with $s\leq t$,  and $x\in D(B(t))$, one has :
    \begin{align*}
        \operatorname{dis}(B(t),B(s))\leq  \operatorname{dis}(A(t), A(s)) + \int_s^t\Vert f(\tau)\Vert d\tau \leq d(t)-d(s),
    \end{align*}
    where $d(t) = \int_0^t(\Vert \dot{a}(\tau)\Vert+ \Vert f(\tau)\Vert)d\tau$ and 
    \begin{align*}
        d(0, B(t)(x)) &
        = d\left(0, A(t)(x+\int_{0}^{t} f(s)ds)\right) \\
        &\leq c(t)\left(1+\|x\| + \int_{0}^{t} \|f(s)\| ds\right)\\
        &\leq (1+ \|f\|_{L^{1}([0,T])})c(t)(1+\Vert x\Vert).
    \end{align*}
    It now remains to apply Theorem \ref{Kunze1} with the operator $B$. 
\end{proof}

\paragraph{Consequences of $(\mathcal{H}_{F})$:} Some technical lemmas that follow from the hypothesis $(\mathcal{H}_{F})$ are presented below and will be used throughout the article. The following result is a technical consequence of Lemma III.39 in \cite{MR467310}, Theorem 2K in \cite{MR512209}, and Lemma 3.1 in \cite{MR4883326}.

\begin{lema}
    \label{lema:OptimalMeasurableSelection}
    Let $\phi\colon [0, T]\times \mathcal{H}\mapsto \mathbb{R}$ be a $\mathcal{T}\otimes\mathcal{B}$-measurable function with is convex continuous in $x\in\mathcal{H}$. Assume that $(\mathcal{H}_{F})$ is satisfied. Then, for every  measurable mapping $u\colon[0,T]\to\mathcal{H}$, the set-valued mappings $t\rightrightarrows F(t, u(t))$ and  $ t\rightrightarrows \operatorname{argmin}\{\phi(t,y):y\in F(t,u(t))\}$ are measurable and hence there exists a measurable selection $f$ of $t\rightrightarrows  F(t,u(t))$ such that:
    \begin{align*}
        \min_{y\in F(t,u(t))}\phi(t,y) = \phi(t, f(t)).
    \end{align*}
    Consequently, there exists a measurable selection $f$ of $t\rightrightarrows  F(t,u(t))$ such that:
    \begin{align*}
        H_{F}(t,u(t),v(t)) = \langle v(t),f(t)\rangle \quad\textrm{for a.e. }t\in[0,T].
    \end{align*}
\end{lema}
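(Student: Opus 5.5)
We need to prove Lemma \ref{lema:OptimalMeasurableSelection}. The plan is to establish the measurability of $t\rightrightarrows F(t,u(t))$ first, then that of the argmin map, and finally to extract a measurable selection. The support function identity then follows by a particular choice of $\phi$.

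\textbf{Step 1: measurability of $t\rightrightarrows F(t,u(t))$.} We use the measurability of $t\rightrightarrows \operatorname{Gph}F(t,\cdot)$ from $(\mathcal{H}_{F})$(1). Its graph is $\mathcal{T}\otimes\mathcal{B}(\mathcal{H}\times\mathcal{H})$-measurable, since $\mathcal{H}$ is separable and the values are closed; norm$\times$weak closedness implies norm closedness. We then write
\begin{align*}
    \operatorname{gr}\big(F(\cdot,u(\cdot))\big)=\{(t,y): (t,u(t),y)\in \operatorname{gr}(\operatorname{Gph}F(\cdot,\cdot))\}.
\end{align*}
This set is the preimage of a $\mathcal{T}\otimes\mathcal{B}\otimes\mathcal{B}$-measurable set under the measurable map $(t,y)\mapsto(t,u(t),y)$, so it belongs to $\mathcal{T}\otimes\mathcal{B}$. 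The values $F(t,u(t))$ are weakly compact and convex, hence norm closed. Because $\mathcal{T}$ is complete, the projection theorem (Castaing--Valadier, Theorem III.30, and Lemma III.39 of \cite{MR467310}) shows that graph measurability of a closed-valued map implies measurability. The map also admits a Castaing representation $(f_n)_n$ of measurable selections with $F(t,u(t))=\overline{\{f_n(t)\}}$.

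\textbf{Step 2: measurability of the argmin map and existence of a minimizer.} Set $m(t):=\inf_{y\in F(t,u(t))}\phi(t,y)$. The function $\phi(t,\cdot)$ is continuous, so $m(t)=\inf_n \phi(t,f_n(t))$. Each $t\mapsto\phi(t,f_n(t))$ is measurable because $\phi$ is Carathéodory-type and $\mathcal{T}\otimes\mathcal{B}$-measurable, so $m$ is measurable. Since $\phi(t,\cdot)$ is convex and continuous, it is weakly lower semicontinuous. Because $F(t,u(t))$ is weakly compact, the infimum is attained, and the set
\begin{align*}
    M(t):=\{y\in F(t,u(t)):\phi(t,y)\le m(t)\}
\end{align*}
is nonempty, convex and weakly compact, hence closed. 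Its graph is the intersection of the graph from Step 1 with $\{(t,y):\phi(t,y)-m(t)\le 0\}$, and both sets lie in $\mathcal{T}\otimes\mathcal{B}$. Applying the same projection/measurable selection theorem (Aumann, or Theorem 2K of \cite{MR512209}) again gives a measurable $f$ with $f(t)\in M(t)$ a.e. This $f$ is the required selection with $\phi(t,f(t))=\min_{y\in F(t,u(t))}\phi(t,y)$.

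\textbf{Step 3: support function identity.} For a measurable $v$, we take $\phi(t,y):=-\langle v(t),y\rangle$. This function is $\mathcal{T}\otimes\mathcal{B}$-measurable, being Carathéodory: measurable in $t$ and continuous and linear in $y$. Minimizing $\phi$ over $F(t,u(t))$ means maximizing $\langle v(t),\cdot\rangle$, which yields $\langle v(t),f(t)\rangle=\sigma_{F(t,u(t))}(v(t))=H_F(t,u(t),v(t))$.

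\textbf{Main obstacle.} We expect the hard step to be Step 1, namely the passage from measurability of $t\rightrightarrows\operatorname{Gph}F(t,\cdot)$ to joint graph measurability and then to measurability of the composition. This needs separability of $\mathcal{H}$ and the completeness of $\mathcal{T}$, and it requires care with weak versus norm topologies. This is exactly the content we would cite from Lemma 3.1 of \cite{MR4883326}.
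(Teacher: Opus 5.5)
Your proposal is correct and follows the paper's route. The paper gives no argument beyond invoking Lemma III.39 of \cite{MR467310}, Theorem 2K of \cite{MR512209} and Lemma 3.1 of \cite{MR4883326}, and your three steps unpack exactly those results: graph measurability of $t\rightrightarrows F(t,u(t))$ via $(t,y)\mapsto(t,u(t),y)$ and the projection theorem, then measurability of the marginal function and of the argmin map through a Castaing representation (with weak compactness and convexity giving attainment), then the choice $\phi(t,y)=-\langle v(t),y\rangle$.
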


\begin{lema}
    Assume that $(\mathcal{H}_{F}^{4})$ is satisfied by $F$. Then, for every $\varepsilon>0$, the multivalued mapping $(t,x)\mapsto G(t,x):=\operatorname{co}F(t,x+\varepsilon\mathbb{B})$ satisfies the OSL condition $(\mathcal{H}_{F}^{4})$ with the same constant (in $L^{1}([0,T])$) as $F$.
\end{lema}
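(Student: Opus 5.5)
The plan is to work entirely with support functions, since the one-sided Lipschitz condition $(\mathcal{H}_{F}^{4})$ is phrased through $H_{F}(t,x,v):=\sup\{\langle v,w\rangle : w\in F(t,x)\}$. The first step is to identify $H_{G}$. The support function of a set and of its convex hull coincide, and the support function of a union is the supremum of the support functions. So for every $t$, $x$ and $v$ we get
\begin{align*}
    H_{G}(t,x,v)=\sup\{\langle v,w\rangle : w\in F(t,x+\varepsilon\mathbb{B})\}=\sup_{e\in\varepsilon\mathbb{B}} H_{F}(t,x+e,v).
\end{align*}
Taking the closed convex hull instead of $\operatorname{co}$ would not change this formula either. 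Measurability issues play no role here, because the OSL inequality is required pointwise for a.e. $t$.

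Next, fix $t$ outside the null set where $(\mathcal{H}_{F}^{4})$ fails, fix $x,y$, and put $v:=x-y$. The key observation is that translating both base points by the same vector $e$ leaves their difference unchanged: $(x+e)-(y+e)=v$. Hence, for every $e\in\varepsilon\mathbb{B}$, the OSL property of $F$ applied at the pair $(x+e,y+e)$ gives
\begin{align*}
    H_{F}(t,x+e,v)\leq H_{F}(t,y+e,v)+L_{F}(t)\|v\|^{2}\leq H_{G}(t,y,v)+L_{F}(t)\|x-y\|^{2},
\end{align*}
where the last inequality uses $y+e\in y+\varepsilon\mathbb{B}$ together with the formula for $H_{G}$. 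Taking the supremum over $e\in\varepsilon\mathbb{B}$ on the left yields $H_{G}(t,x,v)-H_{G}(t,y,v)\leq L_{F}(t)\|x-y\|^{2}$. If $H_{G}(t,y,v)=+\infty$ the inequality is trivial. Otherwise everything is finite, which follows for instance from the linear growth $(\mathcal{H}_{F}^{3})$, so the subtraction is legitimate. This is exactly $(\mathcal{H}_{F}^{4})$ for $G$ with the same constant $L_{F}\in L^{1}([0,T];\mathbb{R}_{+})$.

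The only delicate point I expect is the domain restriction. As stated, $(\mathcal{H}_{F}^{4})$ is assumed only for $x,y\in\overline{D(A(t))}$, whereas the translated points $x+e$ and $y+e$ may leave this set. I would handle this by reading the OSL hypothesis in the lemma as holding on the whole space, or at least on an $\varepsilon$-neighbourhood of $\overline{D(A(t))}$, which is the natural requirement when $F$ is defined on $[0,T]\times\mathcal{H}$. I would add a brief remark to that effect, since the argument above is otherwise a direct translation-invariance computation.
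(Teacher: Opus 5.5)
Your proof is correct and is essentially the paper's argument. The paper also expresses $H_G(t,x,\cdot)$ through the translates $F(t,x+d)$, $d\in\varepsilon\mathbb{B}$, picks a $\delta$-almost maximizing $d$, and applies the OSL inequality at the shifted pair $(x+d,y+d)$; your supremum over $e$ is the same computation without the $\delta$. Your caveat that $x+e$ and $y+e$ may leave $\overline{D(A(t))}$ is well taken: the paper applies $(\mathcal{H}_{F}^{4})$ at these shifted points without comment, so the hypothesis must indeed be read on the whole space, or at least on an $\varepsilon$-neighbourhood of the domain, as you propose.
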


\begin{proof}
    Let $\varepsilon>0$ and $x,y \in \overline{D(A)}$. Given $\delta > 0$, there exists $d \in \varepsilon \mathbb{B}$ such that:
    \begin{align*}
        \sup_{v\in\operatorname{co}F(t,x+\varepsilon\mathbb{B})} \langle x-y,v\rangle \leq \sup_{v\in F(t,x+d)}\langle x-y,v\rangle + \delta\quad\textrm{a.e. }t\in[0,T].
    \end{align*}
    Then, for a.e. $t\in[0,T]$, it follows that
    \begin{align*}
        H_{G}(t,x,x-y)-H_{G}(t,y,x-y)
        &=\sup_{v\in\operatorname{co}F(t,x+\varepsilon\mathbb{B})} \langle x-y,v\rangle -\sup_{v\in\operatorname{co}F(t,y+\varepsilon\mathbb{B})} \langle x-y,v\rangle  \\
        &\leq \sup_{v\in F(t,x+d)}\langle x-y,v\rangle + \delta - \sup_{v\in F(t,y+d)}\langle x-y,v\rangle \\
        &\leq L_{F}(t)\|x-y\|^{2} + \delta.
    \end{align*}
    Finally, the conclusion follows since $\delta>0$ is arbitrary. 
\end{proof}

We end this section with the following closedness result which is an adaptation of the proof of Theorem 1 in \cite{MR755330} to our situation.

\begin{proposicion}
    \label{prop:Closedness} 
    Suppose that $(\mathcal{H}_{F})$ holds. Let $u_{n},v_{n}\colon[0,T]\mapsto \mathcal{H}$ for all $n\in\mathbb{N}$,  be measurable functions such that $(u_{n})_{n}$ converges almost everywhere on $[0,T]$  to a function $u\colon[0,T]\mapsto\mathcal{H}$ and $(v_{n})_{n}$ converges weakly in $L^{1}([0,T];\mathcal{H})$ to $v\colon[0,T]\mapsto\mathcal{H}$. If $v_n (t) \in  F (t, u_n (t))$ , for all $n\in \mathbb{N}$ and almost all $t\in [0, T]$, then $v(t)\in  F (t,u(t))$ for a.e. $t\in[0,T]$.
\end{proposicion}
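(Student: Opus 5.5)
The plan is the classical Mazur-plus-support-function argument behind the convergence theorem of Aubin--Cellina, adapted to the norm$\times$weak closed graph in $(\mathcal{H}_{F})$. First, since $v_{n}\rightharpoonup v$ weakly in $L^{1}([0,T];\mathcal{H})$, Mazur's lemma applied to the tails $\{v_{k}:k\geq n\}$ produces, for each $n$, a convex combination
\begin{equation*}
    w_{n}=\sum_{k=n}^{N_{n}}\lambda_{k}^{n}v_{k},\qquad \lambda_{k}^{n}\geq 0,\quad \sum_{k}\lambda_{k}^{n}=1,
\end{equation*}
with $w_{n}\to v$ strongly in $L^{1}$. Passing to a subsequence, $w_{n}(t)\to v(t)$ in norm for all $t$ outside a null set $N_{0}$. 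We also discard the null sets on which $u_{n}(t)\not\to u(t)$, on which some inclusion $v_{n}(t)\in F(t,u_{n}(t))$ fails, and on which $\operatorname{Gph}F(t,\cdot)$ is not norm$\times$weak closed. Call the union of all these null sets $N$.

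Next, fix $t\notin N$ and argue by contradiction: suppose $v(t)\notin F(t,u(t))$. Because $F(t,u(t))$ is nonempty, convex and weakly compact, hence closed and convex, strict separation gives $y\in\mathcal{H}$ and $\varepsilon>0$ with
\begin{equation*}
    \langle y,v(t)\rangle > \sigma_{F(t,u(t))}(y)+2\varepsilon .
\end{equation*}
The key step is an upper semicontinuity estimate:
\begin{equation*}
    \limsup_{k}\sigma_{F(t,u_{k}(t))}(y)\leq \sigma_{F(t,u(t))}(y).
\end{equation*}
Granting this, for $n$ large and all $k\geq n$ we get $\langle y,v_{k}(t)\rangle\leq \sigma_{F(t,u(t))}(y)+\varepsilon$. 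Taking convex combinations preserves this bound, so $\langle y,w_{n}(t)\rangle\leq\sigma_{F(t,u(t))}(y)+\varepsilon$, and letting $n\to\infty$ yields $\langle y,v(t)\rangle\leq\sigma_{F(t,u(t))}(y)+\varepsilon$, a contradiction.

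The main obstacle is this upper hemicontinuity, since $(\mathcal{H}_{F})$ only gives a closed graph and not upper semicontinuity. The paper's remark shows that norm$\times$weak upper semicontinuity with weakly compact values suffices, and closed graph implies it provided the values $F(t,x)$ stay in a fixed weakly compact set for $x$ near $u(t)$. I would therefore use local boundedness from the linear growth $(\mathcal{H}_{F}^{3})$, assuming it is available together with $(\mathcal{H}_{F})$ or that the limit points $u(t)$ lie where it applies. This gives $\|F(t,u_{k}(t))\|_{\max}\leq l_{F}(t)(1+\sup_{k}\|u_{k}(t)\|)<\infty$.

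With that bound, the direct argument goes as follows. Pick $z_{k}\in F(t,u_{k}(t))$ with $\langle y,z_{k}\rangle\geq\sigma_{F(t,u_{k}(t))}(y)-1/k$, which is possible because the values are weakly compact. The $z_{k}$ are bounded, so along a subsequence realizing the $\limsup$ we have $z_{k}\rightharpoonup z$. The norm$\times$weak closedness of $\operatorname{Gph}F(t,\cdot)$ then gives $z\in F(t,u(t))$, hence
\begin{equation*}
    \limsup_{k}\sigma_{F(t,u_{k}(t))}(y)=\langle y,z\rangle\leq\sigma_{F(t,u(t))}(y).
\end{equation*}
If growth is not granted, I would try instead to obtain the bound on $(v_{k}(t))$ from the weak $L^{1}$ convergence via Dunford--Pettis uniform integrability. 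That only controls integrals, not pointwise values, so I expect the linear growth assumption to be genuinely needed at this point.
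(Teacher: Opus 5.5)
Your skeleton is the Aubin--Cellina convergence-theorem argument the paper points to without giving details: Mazur on tails, strict separation at a fixed $t$, and a pointwise upper estimate of the support function obtained from the norm$\times$weak closed graph. You correctly noticed that $(\mathcal{H}_{F})$ gives a closed graph but not upper hemicontinuity. The paper's one-line reference passes over the same point, since that theorem assumes hemicontinuity.

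The gap is in how you close that hole. You import $(\mathcal{H}_{F}^{3})$, which is not a hypothesis of the proposition. Even if granted, it only bounds $F(t,x)$ for $x\in\overline{D(A(t))}$, whereas you need the bound at the arbitrary points $u_{k}(t)$; where the limit $u(t)$ lies is irrelevant. So you prove a strictly weaker statement, and your closing claim that growth is genuinely needed is false. You are right that at a fixed $t$ the values $v_{k}(t)$ may be unbounded, so the pointwise argument cannot be run on $v_{k}$ itself. The missing idea is to truncate. A weakly convergent sequence in $L^{1}([0,T];\mathcal{H})$ is bounded and uniformly integrable (Dunford--Pettis), that is, $\rho(M):=\sup_{n}\int_{\{\|v_{n}\|>M\}}\|v_{n}\|\,dt\to0$ as $M\to\infty$.

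Fix $M>0$ and set $\chi_{n}:=\mathds{1}_{\{\|v_{n}\|\leq M\}}$ and $w_{n}:=\chi_{n}v_{n}$. Both are bounded in $L^{2}$, so along a subsequence $\chi_{n}\rightharpoonup\chi^{M}$ and $w_{n}\rightharpoonup w^{M}$ weakly in $L^{2}$, hence weakly in $L^{1}$. This limit has three useful properties:
\begin{itemize}
\item $0\leq\chi^{M}\leq1$;
\item $\int_{0}^{T}(1-\chi^{M})\,dt\leq\sup_{n}\|v_{n}\|_{L^{1}}/M$;
\item $\|v-w^{M}\|_{L^{1}}\leq\rho(M)$, because $v_{n}-w_{n}\rightharpoonup v-w^{M}$ and $\|v_{n}-w_{n}\|_{L^{1}}\leq\rho(M)$.
\end{itemize}
Your closed-graph argument now works verbatim, because it is only applied to points of $M\mathbb{B}$. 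At a good $t$, for every $y\in\mathcal{H}$ and $\eta>0$,
\begin{equation*}
    \langle y,w_{k}(t)\rangle\leq\chi_{k}(t)\big(\sigma_{F(t,u(t))}(y)+\eta\big)\qquad\text{for all large }k.
\end{equation*}
When $\chi_{k}(t)=1$, extract a weakly convergent subsequence in $M\mathbb{B}$ and use the closed graph; when $\chi_{k}(t)=0$, both sides vanish.

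This inequality is affine in the pair $(\chi,w)$. It therefore survives Mazur combinations of the pair $(\chi_{n},w_{n})$ and their a.e.\ limit, giving
\begin{equation*}
    \langle y,w^{M}(t)\rangle\leq\chi^{M}(t)\,\sigma_{F(t,u(t))}(y)
\end{equation*}
off a null set that does not depend on $y$. Now take $M=M_{j}\to\infty$. Then $w^{M_{j}}\to v$ and $\chi^{M_{j}}\to1$ in $L^{1}$, hence a.e.\ along a subsequence. This yields $\langle y,v(t)\rangle\leq\sigma_{F(t,u(t))}(y)$ for all $y$ and a.e.\ $t$, and since $F(t,u(t))$ is closed and convex, $v(t)\in F(t,u(t))$. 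This proves the proposition under $(\mathcal{H}_{F})$ alone.
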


\section{The Main Result} 

This section is devoted to the presentation of our existence result for the differential inclusion \eqref{eq:DynamicWithNonLocalInitialCondition}. Subsequently, the notions of solution and $\varepsilon-$solution for differential inclusions with set-valued perturbations are introduced. 

\begin{definicion}[Solution]
    Let $A$, $F$, and $g$ be as described in Section \ref{section:DynamicalSystemAndStandingAssumptions}. Consider the dynamics (with a Cauchy initial condition):
    \begin{align}
        \label{eq:DynamicWithLocalInitialCondition}
        \begin{cases}
            \displaystyle\dot{x}(t) \in -A(t)x(t) + F(t,x(t)) + \int_{0}^{t} g(t,s,x(s))ds \quad\textrm{a.e. }t\in[0,T],\\
            x(0) = x_{0} \in \overline{D(A(0))}.
        \end{cases}
    \end{align}
    \begin{enumerate}
        \item A continuous function $x(\cdot)$ is called a solution of \eqref{eq:DynamicWithLocalInitialCondition} if there exists function $f(\cdot)\in L^{1}([0,T];\mathcal{H})$ such that $x(\cdot)=x(x_{0},f)(\cdot)$ (see Theorem \ref{Kunze1} for the definition) and
        \begin{equation*}
            f(t)\in F(t,x(t))+\int_{0}^{t}g(t,s,x(s))ds
            \qquad\textrm{for a.e. }t\in[0,T].
        \end{equation*}
    
        \item Let $\varepsilon>0$. A continuous function $x(\cdot)$ is called an $\varepsilon$-solution of \eqref{eq:DynamicWithLocalInitialCondition} if there exists a function $f(\cdot)\in L^{1}([0,T];\mathcal{H})$ such that $x(\cdot)=x(x_{0},f)(\cdot)$ and
        \begin{equation*}
            f(t)\in F(t,x(t)+\varepsilon\mathbb{B})+\int_{0}^{t}g(t,s,x(s))ds
            \qquad\textrm{for a.e. }t\in[0,T].
        \end{equation*}
    \end{enumerate}
\end{definicion}

\subsection{Existence of $\varepsilon$-Solutions with Memory}

This subsection is dedicated to the  existence of $\varepsilon$-solutions to the Cauchy problem \eqref{eq:DynamicWithLocalInitialCondition}. Subsequently, a uniform boundedness result for the state of this class of solutions is presented, together with a uniform integrability property of the perturbation appearing in their velocities.

\begin{teorema}[Existence of $\varepsilon$-solutions with memory]
    \label{teo:ExistenceOfEpsilonSolutions}
    Assume that $(\mathcal{H}_{A})$, $(\mathcal{H}_{F})$, $(\mathcal{H}_{F}^{3})$, and $(\mathcal{H}_{g})$ hold. Then, for every $\varepsilon>0$, the problem \eqref{eq:DynamicWithLocalInitialCondition} admits an $\varepsilon$-solution.
\end{teorema}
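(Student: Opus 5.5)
We construct the $\varepsilon$-solution by a discretization scheme on a partition of $[0,T]$, exploiting the $\varepsilon$-enlargement in $F(t,x(t)+\varepsilon\mathbb{B})$ to absorb the error between the current state and the frozen state where a selection of $F$ was taken. This is the classical Tolstonogov/Kunze--Monteiro-Marques route, adapted to the Volterra term and the time-dependent operator $A(t)$ via Corollary \ref{Kunze3}.

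\textbf{Step 1: a priori bound.} We first record a uniform a priori bound. Suppose $x=x(x_0,f)$ with $\|f(t)\|\le l_F(t)(1+\|x(t)\|+\varepsilon)+\int_0^t l_g(t,s)(1+\|x(s)\|)\,ds$. Then we estimate $\|x(t)\|$ by monitoring the distance to a fixed trajectory $y$ of the unperturbed problem (Theorem \ref{Kunze1}) with the same initial datum. Monotonicity of $A(t)$ gives
\[
\tfrac{d}{dt}\tfrac12\|x-y\|^2\le\langle f(t),x(t)-y(t)\rangle .
\]
Lemma \ref{lem:EnhancedDifferentialGronwallInequality}, or simply Lemma \ref{lem:IntegralGronwallInequality} applied to $\|x\|$, then yields $\|x\|_\infty\le M$ with $M$ depending only on $x_0,T,\varepsilon,l_F,l_g$ and the bound on $y$. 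Consequently the admissible $f$ satisfy $\|f(t)\|\le \psi(t):=l_F(t)(1+M+\varepsilon)+(1+M)\int_0^t l_g(t,s)\,ds$, which is integrable on $[0,T]$. Corollary \ref{Kunze3} then gives a uniform modulus of continuity
\[
\|x(t)-x(s)\|\le K'\int_s^t\big(c+|\dot a|+\psi\big)\,d\tau ,
\]
with $K'$ independent of the particular $f$.

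\textbf{Step 2: discrete construction.} Fix a partition $0=t_0<\dots<t_N=T$ so fine that $K'\int_{t_i}^{t_{i+1}}(c+|\dot a|+\psi)<\varepsilon$ for every $i$. We build $x$ inductively on $[t_i,t_{i+1}]$. Having $x$ on $[0,t_i]$, we take, via Lemma \ref{lema:OptimalMeasurableSelection}, a measurable selection $v_i(t)\in F(t,x(t_i))$. We then need to solve on $[t_i,t_{i+1}]$ the equation
\[
\dot x\in -A(t)x+v_i(t)+\int_0^t g(t,s,x(s))\,ds ,
\]
with the past already known and the Volterra term depending on the present piece. This we do by a fixed point. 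Define $R(z)$ as the solution, given by Corollary \ref{Kunze3}, with forcing $v_i(t)+\int_0^{t}g(t,s,z(s))\,ds$. Monotonicity of $A(t)$ gives
\[
\|R(z_1)(t)-R(z_2)(t)\|\le\int_{t_i}^t\int_{t_i}^\tau L_g(\tau,s)\|z_1-z_2\|(s)\,ds\,d\tau ,
\]
which is a history-dependent estimate. Proposition \ref{prop:GeneralizedHistoryDependentFixedPoint}, after Fubini and Cauchy--Schwarz with $L_g\in L^2(D)$, yields a unique fixed point. Concatenating the pieces gives $x=x(x_0,f)$ with $f=v_i+\int_0^t g$ on each subinterval. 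By induction the bound of Step 1 holds on each piece, so $\|x(t)-x(t_i)\|\le\varepsilon$ for $t\in[t_i,t_{i+1}]$. Hence $v_i(t)\in F(t,x(t_i))\subset F(t,x(t)+\varepsilon\mathbb{B})$, and $x$ is an $\varepsilon$-solution.

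\textbf{Main obstacle.} The delicate point is making the partition choice independent of the construction. The mesh must be fixed before $x$ is known, which is why the uniform a priori bound and the uniform modulus of Step 1 must be established for the whole class of admissible forcings first. Measurability of $t\mapsto F(t,x(t_i))$ and the linear growth $(\mathcal{H}_F^3)$ on $\overline{D(A(t))}$ are needed for this; the latter applies because solutions stay in $\overline{D(A(t))}$. A secondary technicality is checking that the constant in Corollary \ref{Kunze3} depends continuously on the initial point, so it stays bounded when restarting at the points $t_i$.
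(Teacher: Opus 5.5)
Your route is genuinely different from the paper's. The paper never fixes a mesh. It freezes the state at $x_{j-1}$, solves the Volterra problem on $[t_{j-1},T]$ by the same generalized almost history-dependent fixed point you use, and defines the next node adaptively as the exit time of the trajectory from the $\varepsilon$-ball around $x_{j-1}$. Because these exit times might accumulate before $T$, it then orders partial $\varepsilon$-solutions by extension, applies Zorn's lemma, and rules out $t_{\max}<T$ by restarting from $t_{\max}$. The a priori bound (Theorem \ref{teo:BoundsForEpsilonSolutionsAndHisSelections}) is proved only afterwards. You instead front-load that bound, together with the uniform modulus from Corollary \ref{Kunze3}. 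This lets you fix a finite partition in advance, so neither exit times nor Zorn's lemma are needed. The trade is a good one. The paper's chain step needs $x_{\mathcal C}$ to extend to a $W^{1,1}$ function on the closed interval $[0,t_{\mathcal C}]$, so that it is an upper bound in $\mathcal Z$, and this tacitly requires an integrable bound of exactly the type you prove in Step 1. Your argument makes explicit what the paper uses implicitly, at the cost of proving the estimate for a whole class of forcings before the construction.

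As written, however, your induction is circular. The admissible class of Step 1 is defined through $l_F(t)(1+\|x(t)\|+\varepsilon)$. On $[t_i,t_{i+1}]$, the frozen part of your forcing only satisfies $\|v_i(t)\|\le l_F(t)(1+\|x(t_i)\|)$. Placing it in the class therefore requires $\|x(t_i)\|\le\|x(t)\|+\varepsilon$, which is exactly the conclusion $\|x(t)-x(t_i)\|\le\varepsilon$ that you then derive from the class.

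The fix is easy. One option is to define the class with $\sup_{s\le t}\|x(s)\|$ in place of $\|x(t)\|$. The Gr\"onwall step is unchanged, since the right-hand side of $\|x(t)\|\le\|y\|_\infty+\int_0^t\|f\|$ is nondecreasing in $t$, and the forcing built up to $t_{i+1}$ then belongs to the class outright. The other option is a continuation argument starting from the first time in $[t_i,t_{i+1}]$ at which $\|x-x(t_i)\|$ reaches $\varepsilon$. In either version, apply Step 1 and Corollary \ref{Kunze3} to the concatenated trajectory $x(x_0,f)$ on $[0,T]$, with $f$ extended by $0$ beyond the current node; uniqueness makes this consistent. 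Only $x_0$ then enters the constant, and your ``secondary technicality'' about restarting at $t_i$ disappears.

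Finally, ``solutions stay in $\overline{D(A(t))}$'' does not justify using $(\mathcal H_F^3)$ at the frozen point. The point $x(t_i)\in\overline{D(A(t_i))}$ need not lie in $\overline{D(A(t))}$ for $t>t_i$. The paper makes the same tacit use, via its selection $f_j(t)=\operatorname{Proj}_{F(t,x_{j-1})}(0)$ on $[t_{j-1},T]$. So this is a caveat shared with the paper, and it does not arise in the autonomous case.
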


This theorem extends \cite[Lemma 1]{MR1910880} to problem \eqref{eq:DynamicWithLocalInitialCondition} by incorporating a Volterra term. However, the presence of the Volterra term requires more refined arguments and relies on the fact that the differential inclusion
\begin{align*}
    \begin{cases}
        \dot{x}(t)\in -A(t)x(t)+f(t) \quad \textrm{a.e. } t\in[0,T],\\
        x(0)=x_{0}
    \end{cases}
\end{align*}
admits a unique solution $x(x_0, f)$, for every integrable mapping $f\colon[0,T]\to\mathcal{H}$, as guaranteed by Corollary \ref{Kunze3}.

\begin{proof}
    To begin with, it will be shown that it is possible to construct an $\varepsilon$-solution on a subinterval $I\subseteq[0,T]$, and then, by means of Zorn's Lemma, the existence of an $\varepsilon$-solution on $[0,T]$ will be established.

    \noindent$\circ$ \textit{Recurrence:} Let $t_{0}:=0$. We will construct recursively a sequence $(x_{k})_{k}$ of elements in $\mathcal{H}$, an increasing sequence $(t_{k})_{k}$ such that $\pi:=\{t_{k}:k\in\mathbb{N}\}$ is a partition of a subinterval $[0,t_{\infty}]$ of $[0,T]$, where $\displaystyle t_{\infty}:=\lim_{k\to\infty}t_{k}$, and a sequence $(x_{\pi}^{k}(\cdot))_{k\geq1}$ of absolutely continuous functions $x_{\pi}^{k}\colon[t_{k-1},t_{k}]\to\mathcal{H}$ as follows:
    \begin{align*}
        x_{k}:=
        \begin{cases}
            x_{0}                  
            &\textrm{if $k=0$,}\\
            x_{\pi}^{k}(t_{k}) 
            &\textrm{if $k\geq 1$, in which case it is assumed that for every $j=1,\dots,k$, there exist $t_{j}\in[0,T]$}\\
            &\textrm{and $x_{\pi}^{j}(\cdot)\in W^{1,1}([t_{j-1},T];\mathcal{H})$, such that $x_{\pi}^{j}(\cdot)$ is the unique solution to $(P_{j})$, given by:}\\
            &(P_{j}) \begin{cases}
                \displaystyle
                \dot{x}(t)\in -A(t)x(t)+f_{j}(t)
                +\int_{t_{j-1}}^{t}g(t,s,x(s))ds\\
                \qquad\qquad\qquad\qquad\qquad\quad\displaystyle
                +\sum_{i=1}^{j-1}\int_{t_{i-1}}^{t_{i}}g(t,s,x_{\pi}^{i}(s))ds
                \quad \textrm{a.e. }t\in[t_{j-1},T],\\
                x(t_{j-1})=x_{j-1},
            \end{cases}\\
            &t_{j} := \sup\{t\geq t_{j-1}:\|x_{\pi}^{j}(t)-x_{k-1}\|<\varepsilon\},   \\
            &\textrm{and $f_{j}\in L^{1}([t_{j-1},T];\mathcal{H})$ is defined by $f_{j}(t) = \operatorname{Proj}_{F(t,x_{j-1})}(0)$ for a.e. $t\in[t_{j-1},T]$.}
        \end{cases}
    \end{align*}
    Of course the sum $\displaystyle\sum_{i=1}^{j-1}\int_{t_{i-1}}^{t_{i}} g(t,s,x_{\pi}^{i}(s))ds$ will be zero for $j=1$. Note that the measurability and the integrability of $f_j$ are guaranteed by Lemma \ref{lema:OptimalMeasurableSelection} and hypothesis $(\mathcal{H}_{F}^{3})$, respectively.  In what follows, it is proved that for every $k\in\mathbb{N}$, the existence of $x_{k-1}$ implies the existence of $x_{\pi}^{k}(\cdot)$ as the unique solution of $(P_{k})$ according to the recursive construction above. Let $k\in\mathbb{N}^{*}$, the mapping $\varphi\colon C([t_{k-1},T];\mathcal{H})\to C([t_{k-1},T];\mathcal{H})$ is defined by $\varphi(y(\cdot))=x(\cdot)$, where $x(\cdot)$ is the unique solution to the problem:
    \begin{align*}
        (P_{k}^{\varphi})
        \begin{cases}
            \displaystyle\dot{x}(t) \in -A(t)x(t) + f_{k}(t) + \int_{t_{k-1}}^{t} g(t,s,y(s))ds + \sum_{i=1}^{k-1} \int_{t_{i-1}}^{t_{i}} g(t,s,x_{\pi}^{i}(s))ds \quad\textrm{for a.e. }t\in[t_{k-1},T],\\
            x(t_{k-1})=x_{k-1},
        \end{cases}
    \end{align*}
    guaranteed by Corollary \ref{Kunze3}. Moreover, it follows that $x(\cdot)\in  W^{1,1}([t_{k-1},T];\mathcal{H})$. It is claimed that $\varphi$ is well-defined and admits a unique fixed point: Let $y_{1}(\cdot),y_{2}(\cdot)\in C([t_{k-1},T];\mathcal{H})$, and set $x_{1}:=\varphi(y_{1})$ and $x_{2}:=\varphi(y_{2})$. Then, for a.e. $t\in[t_{k-1},T]$, one has:
    \begin{align*}
        -\dot{x}_{1}(t) + f_{k}(t) + \int_{t_{k-1}}^{t}g(t,s,y_{1}(s))ds + \sum_{i=1}^{k-1} \int_{t_{i-1}}^{t_{i}} g(t,s,x_{\pi}^{i}(s))ds\in A(t)x_{1}(t),\\
        -\dot{x}_{2}(t) + f_{k}(t) + \int_{t_{k-1}}^{t}g(t,s,y_{2}(s))ds + \sum_{i=1}^{k-1} \int_{t_{i-1}}^{t_{i}} g(t,s,x_{\pi}^{i}(s))ds\in A(t)x_{2}(t),
    \end{align*}
    Then:
    \begin{align*}
        \left\langle -\dot{x}_{1}(t) + f_{k}(t) + \int_{t_{k-1}}^{t}g(t,s,y_{1}(s))ds - \Big(-\dot{x}_{2}(t) + f_{k}(t) + \int_{t_{k-1}}^{t}g(t,s,y_{2}(s))ds\Big),x_{1}(t)-x_{2}(t)\right\rangle\geq0\\\textrm{for a.e. }t\in[t_{k-1},T],
    \end{align*}
    and hence, by $(\mathcal{H}_{g})$, 
    \begin{align*}
        \frac{d}{dt}\frac{\|x_{1}(t)-x_{2}(t)\|^{2}}{2}
        &= \langle \dot{x}_{1}(t)-\dot{x}_{2}(t),x_{1}(t)-x_{2}(t)\rangle \\
        &\leq \left\langle \int_{t_{k-1}}^{t}[g(t,s,y_{1}(s))-g(t,s,y_{2}(s))]ds,x_{1}(t)-x_{2}(t) \right\rangle\\
        &\leq \|x_{1}(t)-x_{2}(t)\| \int_{t_{k-1}}^{t} L_{g}(t,s)\|y_{1}(s)-y_{2}(s)\|ds \qquad\textrm{for a.e. }t\in[t_{k-1},T].
    \end{align*}  
    Set $\phi(t) = \tfrac{1}{2}\|x_{1}(t)-x_{2}(t)\|^{2}$. Then:
    \begin{align*}
        \dot{\phi}(t) \leq \sqrt{2}\sqrt{\phi(t)} \int_{t_{k-1}}^{t} L_{g}(t,s)\|y_{1}(s)-y_{2}(s)\|ds \qquad\textrm{for a.e. }t\in[t_{k-1},T].
    \end{align*}
    So that, dividing by $\sqrt{\phi(t)}$ and integrating both sides of the last inequality, one obtains
    \begin{align*}
        \sqrt{\phi(t)} \leq {\sqrt{2}\over 2}\int_{t_{k-1}}^{t}\left(\int_{t_{k-1}}^{s}L_{g}(s,\sigma)\|y_{1}(\sigma)-y_{2}(\sigma)\| d\sigma\right)ds,
    \end{align*}
    or equivalently
    \begin{align*}
        \|x_{1}(t)-x_{2}(t)\| \leq \int_{t_{k-1}}^{t} \left(\int_{t_{k-1}}^{s} L_{g}(s,\sigma)\|y_{1}(\sigma)-y_{2}(\sigma)\|d\sigma\right) ds \quad\forall t\in[t_{k-1},T].
    \end{align*}
    It follows that:
    \begin{align*}
        \|x_{1}(t)-x_{2}(t)\| 
        \leq \int_{t_{k-1}}^{t} \left(\int_{t_{k-1}}^{T} L_{g}(s,\sigma)ds\right)\|y_{1}(\sigma)-y_{2}(\sigma)\| d\sigma \quad\forall t\in[t_{k-1},T].\\
    \end{align*}
    Since the function $\sigma\in[t_{k-1},T]\mapsto \int_{t_{k-1}}^{T} L_{g}(s,\sigma)ds$ belongs to $L^{2}([t_{k-1},T])$, it follows that $\varphi(\cdot)$ is a generalized almost history-dependent operator with $p=2$. Therefore, by Proposition \ref{prop:GeneralizedHistoryDependentFixedPoint}, there exists a unique fixed point $x_{\pi}^{k}(\cdot)\in C([t_{k-1},T];\mathcal{H})$ of $\varphi(\cdot)$. Consequently:
    \begin{align*}
        &\dot{x}_{\pi}^{k}(t) \in -Ax_{\pi}^{k}(t) + f_{k}(t) + \int_{t_{k-1}}^{t} g(t,s,x_{\pi}^{k}(s))ds + \sum_{i=1}^{k-1} \int_{t_{i-1}}^{t_{i}} g(t,s,x_{\pi}^{i}(s))ds \quad\textrm{a.e. }t\in[t_{k-1},T],\\
        &x_{\pi}^{k}(t_{k-1}) = x_{k-1}.
    \end{align*}
    Moreover, the solution to this problem belongs to $ W^{1,1}([t_{k-1},T];\mathcal{H})$.

    \noindent$\circ$ \textit{Existence of some $\varepsilon$-solution over $I\subseteq[0,T]$:} Let $\pi$, $(x_{k})_{k}$, $(f_{k})_{k}$ and $(x_{\pi}^{k}(\cdot))_{k}$ be generated by the recursive construction above. Then, $x_{\pi}(\cdot)\in W^{1,1}([0,t_{\infty}];\mathcal{H})$ and $f_{\pi}(\cdot)\in L^{1}([0,t_{\infty}];\mathcal{H})$, are defined as:
    \begin{align*}
        \begin{cases}
            \displaystyle x_{\pi}(t) = x_{0}\mathds{1}_{\{t_{0}\}}(t) + \sum_{k\geq1} x_{\pi}^{k}(t)\mathds{1}_{(t_{k-1},t_{k}]}(t) \quad\forall t\leq t_{\infty}.\\
            \displaystyle f_{\pi}(t) = \sum_{k\geq1} f_{k}(t)\mathds{1}_{(t_{k-1},t_{k}]}(t) \quad\textrm{for a.e. } t\leq t_{\infty}.
        \end{cases}
    \end{align*}
    It is clear that $x_{\pi}(\cdot)\in W^{1,1}([0,t_{\infty}];\mathcal{H})$ and $f_{\pi}(\cdot)\in L^{1}([0,t_{\infty}];\mathcal{H})$, and moreover, given $t\in[0,t_{\infty})$, there exists $k\in\mathbb{N}^{*}$ such that $t\in [t_{k-1},t_{k})$, it follows, from the construction of the sequence $(t_{k})_{k\in \mathbb{N}}$,  that:
    \begin{align*}
        f_{\pi}(t) = f_{k}(t) \in F(t,x_{k-1}) \subseteq F(t,x_{\pi}^{k}(t)+\varepsilon\mathbb{B}) \quad\textrm{for a.e. }t\in[t_{k-1},t_{k}],\ \forall k\in\mathbb{N}^{*}.
    \end{align*}
    Then,
    \begin{align*}
        \begin{cases}
            \displaystyle\dot{x}_{\pi}(t) \in -A(t)x_{\pi}(t) + F(t,x_{\pi}(t)+\varepsilon\mathbb{B}) + \int_{0}^{t} g(t,s,x_{\pi}(s))ds\quad\textrm{a.e. }t\in[0,t_{\infty}],\\
            x_{\pi}(0)=x_{0}.
        \end{cases}
    \end{align*}
    Therefore, $x_{\pi}(\cdot)$ is an $\varepsilon$-solution to \eqref{eq:DynamicWithLocalInitialCondition}.
    
    \noindent$\circ$ \textit{Maximal solution:} Consider the partially ordered set $(\mathcal{Z},\preceq)$ given by:
    \begin{align*}
        &\mathcal{Z}:=\left\{
        \begin{matrix}
            (t_{\infty}^{\pi},x_{\pi}(\cdot))\in[0,T]\times W^{1,1}([0,t_{\infty}^{\pi}];\mathcal{H}): &\textrm{$x_{\pi}(\cdot)$ is an $\varepsilon$-solution of \eqref{eq:DynamicWithLocalInitialCondition} over $[0,t_{\infty}^{\pi})$,}\\
            &\textrm{where $\pi$ is a partition of $[0,t_{\infty}^{\pi}]\subseteq[0,T]$.}
        \end{matrix}
        \right\},\\
        &\textrm{and: }(t_{\infty}^{\pi_{1}},x_{\pi_{1}}(\cdot)) \preceq (t_{\infty}^{\pi_{2}},x_{\pi_{2}}(\cdot)) \iff t_{\infty}^{\pi_{1}} \leq t_{\infty}^{\pi_{2}} \quad\textrm{and}\quad x_{\pi_{1}} = x_{\pi_{2}}|_{[0,t_{\infty}^{\pi_{1}}]}.
    \end{align*}
    It is clear, by the previous items,  that $\mathcal{Z}\neq\emptyset$ and $\preceq$ is a partial order on $\mathcal{Z}$. Let $\mathcal{C}\subseteq\mathcal{Z}$ be a totally ordered subset of $\mathcal{Z}$. Define:
    \begin{align*}
        \begin{cases}
            t_{\mathcal{C}}:= \sup\{t_{\infty}\in[0,T]:(t_{\infty},x(\cdot))\in\mathcal{C}\},\\
            \textrm{$x_{\mathcal{C}}\colon[0,t_{\mathcal{C}}]\to\mathcal{H}$ given by $x_{\mathcal{C}}(t) := x(t)$ for all $t\in[0,t_{\infty}]$ if $t\leq t_{\infty}$, where $(t_{\infty},x(\cdot))\in\mathcal{C}$.}
       \end{cases}
    \end{align*}
    It is possible to observe that $x_{\mathcal{C}}(\cdot)$ is continuous on $[0,t_{\mathcal{C}})$, since for every sequence $(t_{n})_{n}$ converging to $\overline{t}\in[0,t_{\mathcal{C}})$, that is, given $\delta>0$, there exists $n_{0}(\delta)\in\mathbb{N}$ such that for $n\geq n_{0}(\delta)$, $t_{n}\in(max(0, \overline{t}-\delta),\overline{t}+\delta)\subseteq[0,t_{\mathcal{C}})$, there exists $(t_{\delta},x_{\delta}(\cdot))\in\mathcal{Z}$ such that:
    \begin{align*}
        \overline{t}+\delta<t_{\delta}<t_{\mathcal{C}}\quad\textrm{and}\quad
        x_{\mathcal{C}}(t) = x_{\delta}(t) \quad\forall t\in[0,t_{\delta}].
    \end{align*}
    Then, since $x_{\delta}(\cdot)\in W^{1,1}([0,t_{\delta}];\mathcal{H})$, one has:
    \begin{align*}
        \lim_{n\to\infty} x_{\mathcal{C}}(t_{n}) = \lim_{n\to\infty} x_{\delta}(t_{n}) = x_{\delta}(\overline{t}) = x_{\mathcal{C}}(\overline{t}),
    \end{align*}
    Therefore, $x_{\mathcal{C}}(\cdot)$ is continuous at $\overline{t}$. Moreover, $\dot{x}_{\mathcal{C}}(t)=\dot{x}(t)$ for a.e. $t\in[0,t_{\delta})$, therefore $x_{\mathcal{C}}(\cdot)$ is locally absolutely continuous in neighborhoods of $\overline{t}\in[0,t_{\mathcal{C}})$ of the form $[0,t_{\delta})$. Hence, $x_{\mathcal{C}}(\cdot)\in W^{1,1}([0,t_{\mathcal{C}});\mathcal{H})$. Naturally, $(t_{\mathcal{C}},x_{\mathcal{C}}(\cdot))$ is an upper bound of $\mathcal{C}$ in $(\mathcal{Z},\preceq)$. By Zorn's Lemma, there exists a maximal element in $\mathcal{Z}$ with respect to the order $\preceq$, denoted by $(t_{\max},x_{\max}(\cdot))\in[0,T]\times W^{1,1}([0,t_{\max}];\mathcal{H})$.

    Finally, it will be shown that a maximal element $(t_{\max},x_{\max}(\cdot))$ satisfies $t_{\max}=T$ and that $x_{\max}(\cdot)$ is an $\varepsilon$-solution. Arguing by contradiction, it is assumed that $t_{\max}<T$. The following problem is defined:
    \begin{align*}
        (P_{\infty})
        \begin{cases}
            \displaystyle\dot{x}(t) \in -A(t)x(t) + f_{\infty}(t) + \int_{t_{\max}}^{t} g(t,s,x(s))ds + \int_{0}^{t_{\max}}g(t,s,x_{\max}(s))ds\quad\textrm{a.e. }t\in[t_{\max},T],\\
            x(t_{\max})=x^{*}:=x_{\max}(t_{\max}),
        \end{cases}
    \end{align*}
    where $f_{\infty}(t)\in F(t,x^{*})$ for a.e. $t\in[t_{\max},T]$ is a strongly measurable selection. Problem $(P_{\infty})$ admits a unique solution $x_{\infty}(\cdot)\in W^{1,1}([t_{\max},T];\mathcal{H})$, guaranteed by a fixed-point argument similar to the one presented above. Moreover, let
    \begin{align*}
        \tau:=\sup\{t\in[t_{\max},T]:\|x_{\infty}(t)-x^{*}\|<\varepsilon\}.
    \end{align*}
    Therefore, it is easy to verify that $(\tau,x(\cdot))\in\mathcal{Z}$ and $(t_{\max},x_{\max}(\cdot))\preceq(\tau,x(\cdot))$, where $\tau>t_{\max}$ and
    \begin{align*}
        x(t) = x_{\max}(t)\mathds{1}_{[0,t_{\max}]}(t) + x_{\infty}(t)\mathds{1}_{(t_{\max},\tau]}(t)\quad\forall t\in[0,\tau],
    \end{align*}
    which contradicts the maximality of $(t_{\max},x_{\max}(\cdot))$ with respect to $\preceq$. 
\end{proof}

\subsection{Uniform Boundedness of the Trajectories}
\begin{teorema}
    \label{teo:BoundsForEpsilonSolutionsAndHisSelections}
    Assume that $(\mathcal{H}_{A})$, $(\mathcal{H}_{F})$, $(\mathcal{H}_{F}^{3})$, and $(\mathcal{H}_{g}^{3})$ hold. Then, there exists $M>0$ and $\mu\in L^{1}([0,T];\mathbb{R}_{+})$, such that for all $\varepsilon\in(0,1)$ and for every $\varepsilon$-solution $x(\cdot)$ of \eqref{eq:DynamicWithLocalInitialCondition}, one has: 
    \begin{align*}
        \|x(t)\|&\leq M \quad\quad\forall t\in[0,T] \\
        \|F(t,x(t)+\varepsilon\mathbb{B})\|_{\max}+\int_{0}^{t}\|g(t,s,x(s))\|ds&\leq \mu(t) \quad\textrm{ for a.e. }t\in[0,T].
    \end{align*}
    More precisely, $M$ depends (continuously) only on $x_0$, $L_F$ and $l_g$ and 
     $$\mu(t)=\left(l_{F}(t)+\int_{0}^{t} l_{g}(t,s)ds\right)(2+M)\quad\textrm{a.e. } t\in[0,T]$$
     and
      \begin{equation}\label{xdot1}
     \Vert \dot{x}(t)\Vert \leq M(1+\Vert \mu\Vert_{L^1})(c(t)+\vert \dot{a}(t)\vert+\mu(t)) \quad \textrm{a.e. } t\in [0, T].
     \end{equation}
\end{teorema}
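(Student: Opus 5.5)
The plan is to derive an a priori estimate on $\|x(t)-y(t)\|$, where $y(\cdot)$ is a fixed reference trajectory, and then close it with a Gr\"onwall inequality. For the reference I take $y:=x(x_0,0)$, the unique solution of $\dot y\in -A(t)y$ with $y(0)=x_0$ given by Theorem \ref{Kunze1}. By \eqref{eq:estimex}, $\|y(t)\|\le \|x_0\|+K\int_0^T(c+|\dot a|)=:M_0$, and this depends only on $x_0$ and the data of $A$.

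Let $x=x(x_0,f)$ be an $\varepsilon$-solution, with $f(t)=v(t)+\int_0^t g(t,s,x(s))ds$ and $v(t)\in F(t,x(t)+\varepsilon\mathbb{B})$. Monotonicity of $A(t)$, applied to $-\dot x+f\in A(t)x$ and $-\dot y\in A(t)y$, gives
\begin{align*}
\frac{d}{dt}\frac{\|x(t)-y(t)\|^2}{2}\le \langle f(t),x(t)-y(t)\rangle\le \|x(t)-y(t)\|\,\|f(t)\|\quad\text{a.e.}
\end{align*}
I bound the right-hand side using $(\mathcal{H}_F^3)$ and $(\mathcal{H}_g^3)$. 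Since $\varepsilon<1$, $\|v(t)\|\le l_F(t)(2+\|x(t)\|)$, and $\int_0^t\|g(t,s,x(s))\|ds\le\int_0^t l_g(t,s)(1+\|x(s)\|)ds$.

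A small technical point arises in the first bound. The growth condition is stated only on $\overline{D(A(t))}$, while $x(t)+\varepsilon\mathbb{B}$ may leave that set. I would either read $(\mathcal{H}_F^3)$ as applying at the points actually used (in the construction of Theorem \ref{teo:ExistenceOfEpsilonSolutions} these are the nodes $x_{k-1}$, which do lie in the domain closure), or simply assume the bound holds on an $\varepsilon$-neighbourhood. I flag this as a possible gap rather than resolve it here.

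Writing $\|x\|\le \|x-y\|+M_0$ and setting $\theta=\tfrac12\|x-y\|^2$ puts us exactly in the setting of Lemma \ref{lem:EnhancedDifferentialGronwallInequality}, with $\theta(0)=0$. The coefficients are $k_1=\sqrt2\,l_F(2+M_0)+\sqrt2\int_0^t l_g(t,s)ds\,(1+M_0)$, $k_2=2l_F$, $k_3=2$ and $k_4=l_g$, with $k_5=k_6=0$. The lemma requires $k_4\in L^1(D)$ and $k_1\in L^1$, both of which follow from $l_g\in L^1(D)$ by Fubini. It then yields $\|x(t)-y(t)\|\le M_1$, with $M_1$ depending only on the data and not on $\varepsilon$ or on the particular solution. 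Hence $\|x(t)\|\le M:=M_0+M_1$.

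Once $\|x(t)\|\le M$ is known, the second estimate follows immediately. The same growth bounds give
\begin{align*}
\|F(t,x(t)+\varepsilon\mathbb{B})\|_{\max}+\int_0^t\|g(t,s,x(s))\|ds\le \Big(l_F(t)+\int_0^t l_g(t,s)ds\Big)(2+M)=\mu(t),
\end{align*}
and $\mu\in L^1$ again by Fubini. For \eqref{xdot1}, I note that $\|f(t)\|\le\mu(t)$ and apply Corollary \ref{Kunze3}. Its Lipschitz-type estimate yields $\|\dot x(t)\|\le K(1+\|\mu\|_{L^1})(c(t)+|\dot a(t)|+\mu(t))$ a.e., by differentiating the bound on $\|x(t)-x(s)\|$ and using Lebesgue points. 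Because $K$ depends continuously on bounded data, enlarging $M$ so that $M\ge K$ gives the stated form.

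The main obstacle is the memory term. Without it, the classical Gr\"onwall inequality (Lemma \ref{lem:DifferentialGronwallInequality}) would suffice, but the product $\|x(t)-y(t)\|\int_0^t l_g(t,s)\|x(s)-y(s)\|ds$ is not of that form. It must be handled through the $k_3\sqrt{\theta(t)}\int k_4\sqrt{\theta(s)}$ term of Lemma \ref{lem:EnhancedDifferentialGronwallInequality}, whose proof relies on monotonicity of the comparison function $\vartheta$. I expect that verification, together with checking the integrability of $t\mapsto\int_0^t l_g(t,s)ds$, to be the delicate point of the argument.
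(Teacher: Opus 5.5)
Your proposal is correct and is essentially the paper's own proof. Both compare with the unperturbed trajectory $y=x(x_0,0)$ via monotonicity of $A(t)$, apply the growth bounds, and use Lemma \ref{lem:EnhancedDifferentialGronwallInequality} with $\theta(0)=0$, $k_2=2l_F$, $k_3=2$, $k_4=l_g$, before finishing with Corollary \ref{Kunze3} and enlarging $M$ to dominate its constant $K$. The issue you flag is a real imprecision: $(\mathcal{H}_F^3)$ is only assumed on $\overline{D(A(t))}$ but is used on $x(t)+\varepsilon\mathbb{B}$. The paper's proof passes over it in exactly the same way, so it is an imprecision in the hypotheses shared with the paper, not a defect of your argument relative to it.
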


\begin{proof}
    Let $y(\cdot)\in W^{1,1}([0,T];\mathcal{H})$ be the unique solution (guaranteed by Corollary \ref{Kunze3}) to
    \begin{align*}
        \begin{cases}
            \dot{y}(t) \in -A(t)y(t) \quad\textrm{a.e. }t\in[0,T],\\
            y(0)=x_{0},
        \end{cases}
    \end{align*}
    and $x(\cdot):=x(x_0;f_x)(\cdot)\in W^{1,1}([0,T];\mathcal{H})$ be an $\varepsilon$-solution to \eqref{eq:DynamicWithLocalInitialCondition}, where $f_{x}(t)\in F(t,x(t)+\varepsilon\mathbb{B})+\int_{0}^{t}g(t,s,x(s))ds$ for a.e. $t\in[0,T]$ is the associated measurable selection. Then:
    \begin{equation}\label{xdotincl}
        -\dot{x}(t)+f_{x}(t) \in A(t)x(t) \quad\textrm{for a.e. }t\in[0,T],
    \end{equation}
    by $(\mathcal{H}_{A})$:
    \begin{align*}
        \langle -\dot{x}(t)+f_{x}(t)+\dot{y}(t) ,x(t)-y(t)\rangle \geq 0\quad\textrm{a.e. }t\in[0,T],
    \end{align*}
    then for a.e. $t\in[0,T]$:
    \begin{align*}
        \frac{d}{dt}\frac{\|x(t)-y(t)\|^{2}}{2} & = \langle \dot{x}(t)-\dot{y}(t),x(t)-y(t)\rangle \\
        & \leq \langle f_{x}(t),x(t)-y(t)\rangle \leq \|f_{x}(t)\|\|x(t)-y(t)\|\\
        & \leq \|x(t)-y(t)\|\left(\|F(t,x(t)+\varepsilon\mathbb{B})\|_{\max} + \int_0^t\|g(t,s,x(s))\| ds\right)\\
        & \leq \|x(t)-y(t)\|\left(L_F(t)(1+\varepsilon+\Vert x(t)\Vert)+ \int_0^t l_g(t,s)(1+\Vert x(s)\Vert)ds\right)\\
        &\leq \|x(t)-y(t)\|\Big(L_F(t)(1+\varepsilon+\Vert x(t)-y(t)\Vert+\Vert y(t)\Vert) \\
        &\hskip 4cm + \int_0^t l_g(t,s)(1+\Vert x(s)-y(s)\Vert+\Vert y(s)\Vert)ds\Big)\\
        &\leq \|x(t)-y(t)\|\left(L_{F}(t)\Vert x(t)-y(t)\Vert+ \int_0^t l_g(t,s)\Vert x(s)-y(s)\Vert ds \right.\\
        & \hskip 3cm  \left. +  (L_F(t)(2+\Vert y\Vert_\infty)+(1+\Vert y\Vert_\infty)\int_0^tl_g(t, s)ds\right)
    \end{align*}
    Define de function $\varphi(t) ={1\over 2}\|x(t)-y(t)\|^{2}$. Then
    \begin{align*}
        \dot{\varphi}(t) &\leq  \sqrt{2\varphi(t)}\left(L_F(t)\sqrt{2\varphi(t)}+\int_0^tl_g(t,s)\sqrt{2\varphi(s)}ds+(L_F(t)(2+\Vert y\Vert_\infty)+(1+\Vert y\Vert_\infty)\int_0^tl_g(t, s)ds\right)\\
        &\leq 2L_F(t)\varphi(t)+2\sqrt{\varphi(t)}\int_0^tl_g(t,s)\sqrt{\varphi(s)}ds+\sqrt{\varphi(t)} \sqrt{2}(2+\Vert y\Vert_\infty)\left(L_F(t)+\int_0^tl_g(t, s)ds\right).
    \end{align*}
   Applying Lemma \ref{lem:EnhancedDifferentialGronwallInequality} with $k_1(t) = \sqrt{2}(2+\Vert y\Vert_\infty)\big(L_F(t)+\int_0^tl_g(t, s)ds\big)$, $k_2(t) = 2L_F(t)$, $k_3(t) =2$ and $k_{4}(t,s)= l_{g}(t,s)$, on obtains for all $t\in[0,T]$:
   \begin{align*}
        \varphi(t) 
        &\leq  \varphi(0) \exp\left[ \int_{0}^{t} \left(k_{1}(s) + k_{2}(s) + k_{3}(s) \int_{0}^{s}k_{4}(s,\tau)d\tau\right)\right]\\
        & + \int_{0}^{t} k_{1}(s) \exp\left[\int_{s}^{t} \left(k_{1}(\tau) + k_{2}(\tau) + k_{3}(\tau) \int_{0}^{\tau}k_{4}(\tau,\sigma)d\sigma\right)d\tau\right]ds\\
        &= \int_{0}^{t} k_{1}(s) \exp\left[\int_{s}^{t} \left(k_{1}(\tau) + k_{2}(\tau) + k_{3}(\tau) \int_{0}^{\tau}k_{4}(\tau,\sigma)d\sigma\right)d\tau\right]ds\\
        & \leq \int_{0}^{T} k_{1}(s)ds \exp\left[\int_{0}^{T} \left(k_{1}(\tau) + k_{2}(\tau) + k_{3}(\tau) \int_{0}^{\tau}k_{4}(\tau,\sigma)d\sigma\right)d\tau\right].
    \end{align*}
    Thus, by the triangle inequality:
    \begin{align*}
        \| x\|_{\infty} \leq M_{0}
        := \Vert y\Vert_{\infty} + \sqrt{\int_{0}^{T} k_{1}(s)ds}\exp\left[{1\over 2}\int_{0}^{T} \left(k_{1}(\tau) + k_{2}(\tau) + k_{3}(\tau) \int_{0}^{\tau}k_{4}(\tau,\sigma)d\sigma\right)d\tau\right].
    \end{align*}
    
    In this way, $\|x(t)\| \leq M_{0}$ for all $t\in[0,T]$. Finally, by $(\mathcal{H}_{F}^{3})$ and $(\mathcal{H}_{g}^{3})$, it follows that:
    \begin{align*}
        &\|F(t,x(t)+\varepsilon\mathbb{B})\|_{\max} + \int_{0}^{t} \|g(t,s,x(s))\|ds \\
        &\leq l_{F}(t)(1+\|x(t)\|+\varepsilon)+\int_{0}^{t} l_{g}(t,s)(1+\|x(s)\|)ds\\
        &\leq \left(l_{F}(t)+\int_{0}^{t} l_{g}(t,s)ds\right)(2+M_0)=:\mu_0(t)\quad\textrm{a.e. } t\in[0,T].
    \end{align*}

    It remains now to establish the inequality \eqref{xdot1}. Indeed, the last inequality asserts that the measurable mapping $f_{x}$ is integrable, in fact 
    \begin{equation*}
        \Vert f_{x}\Vert_{L^{1}([0,T];\mathcal{H})} \leq \Vert \mu_{0}\Vert_{L^{1}([0,T];\mathbb{R})}.  
    \end{equation*}
    Corollary \ref{Kunze3} asserts the following differential inclusion:
    \begin{align*}
        \begin{cases}
            \dot{y}(t)\in -A(t)y(t)+f_x(t) \quad\textrm{a.e. } t\in[0,T],\\ 
            y(0)=x_{0},
        \end{cases}
    \end{align*}
    has a unique solution which coincides with $x$ and satisfying 
    \begin{equation*}
        \Vert\dot{y}(t)\Vert \leq K(1+\Vert f_x\Vert_{L^{1}([0,T];\mathcal{H})})(c(t)+\vert \dot{a}\vert + \Vert f_{x}(t)\Vert)\leq K(1+\Vert\mu_{0}\Vert_{L^{1}([0,T];\mathbb{R})})(c(t)+\vert \dot{a}\vert + \mu_0(t)),
    \end{equation*} 
    for some constant $K>0$, depending only on
    $T$, $x_0$, $c(\cdot) \in L^{1}([0,T];\mathbb{R})$ and $\dot{a}(\cdot)\in L^{1}([0,T];\mathbb{R})$. It suffices to set $M:=\max\{M_{0}, K\}$ and $\mu(t):=\left(l_{F}(t)+\int_{0}^{t} l_{g}(t,s)ds\right)(2+M)$ for a.e. $t\in [0, T]$ to complete the proof. 
\end{proof}

\subsection{Existence of Solutions for Nonlocal Problem with Memory}

In this subsection, we present our  existence result for the nonlocal dynamical system \eqref{eq:DynamicWithNonLocalInitialCondition}. 

\begin{teorema}[Existence of solutions for nonlocal problem]
    \label{theo:Existence-MainResult}
    Assume that $(\mathcal{H}_{A})$, $(\mathcal{H}_{F})$, $(\mathcal{H}^{3}_{F})$, $(\mathcal{H}^{4}_{F})$, $(\mathcal{H}_{g})$ and $(\mathcal{H}_{h})$ hold. If
    \begin{eqnarray}
        \label{eq:ContractionConditionForMainTheorem}
        L_{h}\exp\left[\int_{0}^{T} \left( 2L_{F}(s) + 2\int_{0}^{s}L_{g}(s,\sigma)d\sigma\right)ds\right]<1,
    \end{eqnarray}
    then there exists a solution $x$ to \eqref{eq:DynamicWithNonLocalInitialCondition}. Moreover, there exists  $M$ depends (continuously) only on $x_0$, $L_F$ and $l_g$ such that for
    \begin{align*}
        \mu(t)=\left(l_{F}(t)+\int_{0}^{t} l_{g}(t,s)ds\right)(2+M)\quad\textrm{a.e. } t\in[0,T]
    \end{align*}
    the solution $x$ satisfies the inequality
    \begin{equation*}
        \Vert \dot{x}(t)\Vert \leq M(1+\Vert \mu\Vert_{L^{1}([0,T];\mathbb{R})})(c(t)+\vert \dot{a}(t)\vert+\mu(t)) \quad \textrm{a.e. } t\in [0, T].
    \end{equation*}
\end{teorema}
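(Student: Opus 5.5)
We will argue in two stages: first solve the Cauchy problem \eqref{eq:DynamicWithLocalInitialCondition} for each fixed $x_0\in\overline{D(A(0))}$, then treat the nonlocal condition through a Nadler fixed point (Proposition \ref{prop:NadlerFixedPoint}) of the set-valued map $x_0\mapsto h(\mathcal{S}(x_0))$.

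\textbf{Step 1 (Cauchy problem).} Fix $x_0$ and take $\varepsilon_n\downarrow 0$. Theorem \ref{teo:ExistenceOfEpsilonSolutions} gives $\varepsilon_n$-solutions $x_n=x(x_0,f_n)$. Theorem \ref{teo:BoundsForEpsilonSolutionsAndHisSelections} gives $\|x_n\|_\infty\le M$, $\|f_n(t)\|\le\mu(t)$, and the bound \eqref{xdot1} on $\dot x_n$. Rather than using Lemma \ref{compactness} alone, which yields only weak convergence and does not suit the nonconvex closedness argument, we will show that $(x_n)$ is Cauchy in $C([0,T];\mathcal{H})$. This is where $(\mathcal{H}_F^4)$ enters. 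For $m,n$, monotonicity of $A(t)$ gives
\begin{align*}
\tfrac12\tfrac{d}{dt}\|x_n-x_m\|^2\le\langle f_n-f_m,x_n-x_m\rangle .
\end{align*}
Write $f_n=v_n+\int_0^t g(t,s,x_n(s))ds$ with $v_n(t)\in F(t,x_n(t)+\varepsilon_n\mathbb{B})$, so that $v_n(t)\in F(t,x_n(t)+d_n(t))$ for some $\|d_n\|\le\varepsilon_n$. Then
\begin{align*}
\langle v_n-v_m,x_n-x_m\rangle\le H_F(t,x_n+d_n,\cdot)-\langle v_m,\cdot\rangle ,
\end{align*}
and the OSL condition is applied at the shifted points $x_n+d_n$, $x_m+d_m$. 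Since $\|v_n\|\le\mu$, the shifts cost $O(\varepsilon_n+\varepsilon_m)\mu(t)$. This yields
\begin{align*}
\dot\theta\le C(\varepsilon_n+\varepsilon_m)\mu(t)\bigl(1+\sqrt\theta\bigr)+2L_F\theta+2\sqrt\theta\int_0^tL_g(t,s)\sqrt{\theta(s)}ds
\end{align*}
with $\theta=\tfrac12\|x_n-x_m\|^2$ and $\theta(0)=0$. Lemma \ref{lem:EnhancedDifferentialGronwallInequality} then gives $\theta\to0$ uniformly, so $x_n\to x$ uniformly. Since $\dot x_n$ is bounded by the integrable function \eqref{xdot1}, we may extract a subsequence with $f_n\rightharpoonup f$ in $L^1$ and $\dot x_n\rightharpoonup\dot x$ (Lemma \ref{compactness}). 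The Volterra parts converge strongly by the $L_g$-Lipschitz property. Applying Proposition \ref{prop:Closedness} to $v_n$ with $u_n=x_n+d_n\to x$ gives $f(t)-\int_0^tg\in F(t,x(t))$. Finally, $x=x(x_0,f)$ follows by passing to the limit in the monotonicity inequality characterizing solutions, or equivalently by continuous dependence of $x(x_0,\cdot)$ on $f$. Let $\mathcal{S}(x_0)\subset C([0,T];\mathcal{H})$ denote the nonempty solution set.

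\textbf{Step 2 (Filippov-type estimate).} For $x_0,z_0$ and a solution $x\in\mathcal{S}(x_0)$, we need a solution $z\in\mathcal{S}(z_0)$ with
\begin{align*}
\|x-z\|_\infty\le \exp\Big[\tfrac12\int_0^T\big(2L_F+2\textstyle\int_0^sL_g\big)\Big]\|x_0-z_0\| .
\end{align*}
(The exponent in \eqref{eq:ContractionConditionForMainTheorem} suggests the constant is in fact this exponential without the $1/2$; we will track constants accordingly.) To build $z$, we repeat the construction of Theorem \ref{teo:ExistenceOfEpsilonSolutions}. On each step we choose the selection $f_j(t)\in F(t,z_{j-1})$ maximizing $\langle z_{j-1}-x(t),\cdot\rangle$, or more precisely minimizing the OSL defect; this is possible by Lemma \ref{lema:OptimalMeasurableSelection}. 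With this choice, $\langle f_z-f_x,z-x\rangle\le L_F\|z-x\|^2+O(\varepsilon)$. The same Gronwall computation as in Step 1 then gives the estimate up to $O(\varepsilon)$, and we pass to the limit $\varepsilon\to0$ as in Step 1. We expect this step to be the main obstacle, since the memory term forces the OSL-adapted selection to be carried through the Zorn-lemma construction.

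\textbf{Step 3 (fixed point).} On $X=\overline{D(A(0))}$, define $S(x_0)=\overline{h(\mathcal{S}(x_0))}$. Its values are bounded, because $h$ is Lipschitz and $\|x\|_\infty\le M$. By Step 2 and $(\mathcal{H}_h)$,
\begin{align*}
\operatorname{Hauss}(S(x_0),S(z_0))\le L_h e^{\int_0^T(2L_F+2\int_0^sL_g)}\|x_0-z_0\|,
\end{align*}
which is a contraction by \eqref{eq:ContractionConditionForMainTheorem}. Proposition \ref{prop:NadlerFixedPoint} then gives $\bar x_0\in S(\bar x_0)$. Because of the closure in the definition of $S$, we still need $\bar x_0=h(x)$ for some $x\in\mathcal{S}(\bar x_0)$. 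For this we will show that $\mathcal{S}(\bar x_0)$ is compact in $C([0,T];\mathcal{H})$, or at least closed under uniform limits with the $\dot x$ bound, using the argument of Step 1; then $h(\mathcal{S}(\bar x_0))$ is closed. Alternatively, we can take a Picard sequence in Nadler's proof, which converges uniformly by Step 2, and pass to the limit. The derivative bound in the statement is then inherited from \eqref{xdot1} via Theorem \ref{teo:BoundsForEpsilonSolutionsAndHisSelections}, with $M$ taken uniform over the bounded set of initial data $h(\cdot)$ involved.
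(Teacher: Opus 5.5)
There is a genuine gap, and it sits in Step 1, which the rest of the proof depends on. Your bound $\langle v_n-v_m,x_n-x_m\rangle\le H_F(t,x_n+d_n,x_n-x_m)-\langle v_m,x_n-x_m\rangle$ is correct. To invoke $(\mathcal{H}_F^4)$, however, you would need $\langle v_m,x_n-x_m\rangle\ge H_F(t,x_m+d_m,x_n-x_m)$, and for an arbitrary selection only the reverse inequality holds. The OSL condition controls support functions, not pairs of arbitrary selections, and the shifts $d_n,d_m$ do not change this. Here is a counterexample. Take $A\equiv0$, $g\equiv0$ and $F\equiv\mathbb{B}$, which satisfies all hypotheses with $L_F=0$. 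The functions $x_n(t)=(-1)^nte$, with $\|e\|=1$, are solutions (hence $\varepsilon_n$-solutions) with $x_n(0)=0$. Yet $\|x_n-x_{n+1}\|_\infty=2T$, while your differential inequality would force $\theta\to0$. Your derivation uses only $v_n(t)\in F(t,x_n(t)+d_n(t))$; nothing in the minimal-norm construction of Theorem \ref{teo:ExistenceOfEpsilonSolutions} couples the selections of different $x_n$. So it does not yield the Cauchy property. Step 2 passes to the limit ``as in Step 1'', so it inherits the same gap.

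The missing idea is the one you keep for Step 2, but it must already drive the existence argument: each approximation has to be built relative to the previous one. This is what the paper does in Lemma \ref{lem:ExistenceResultPart1}. It starts from a given $\varepsilon$-solution $x$, not a true solution, so the reference point is $x(t)+w(t)$ with $\|w(t)\|\le\varepsilon$. It then reruns the recursive/Zorn construction of a $\delta$-solution $y$, choosing selections $v_j(t)\in F(t,y_{j-1})$ that maximize $\langle x(t)+w(t)-y_{j-1},\cdot\rangle$. Mind the sign here: maximizing $\langle z_{j-1}-x(t),\cdot\rangle$, as in your Step 2, is the wrong direction and leaves the defect $H_F(t,x,z-x)-\langle f_x,z-x\rangle$ uncontrolled. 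With the correct selection, OSL gives $\|x-y\|\le\|x_0-y_0\|e^{(\cdots)}+(\varepsilon+\delta)C_1$. Applying this with $y_0=x_0$ and a summable sequence $(\varepsilon_n)$ gives $\varepsilon_n$-solutions with $\|x_{n+1}-x_n\|_\infty\le(\varepsilon_n+\varepsilon_{n+1})C_1$ (Lemma \ref{lem:ExistenceResultPart2}). After that, your limit passage goes through: weak $L^1$ limits, Proposition \ref{prop:Closedness}, and continuous dependence on $f$, which is a clean way to identify $x=x(x_0,f)$. The Hausdorff estimate then follows as well.

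In Step 3, your first fix does not work. $\mathcal{S}(\bar x_0)$ need not be compact in $C([0,T];\mathcal{H})$, and closedness of $\mathcal{S}(\bar x_0)$ does not make $h(\mathcal{S}(\bar x_0))$ closed. The Picard alternative works only if it is run on trajectories, choosing $x^{k+1}\in\mathcal{S}(h(x^k))$ close to $x^k$. That is exactly Nadler's theorem for $z\mapsto\operatorname{Sol}(h(z))$ on $C([0,T];\mathcal{H})$, as in the paper, and it removes the closure issue entirely.
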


\begin{remark}[Dynamics in Banach spaces] 
    As in Dontchev et al. (see \cite{MR1910880,MR3869577}), the result can be stated in a more general setting, namely in Banach spaces $X$ whose dual $X^{*}$ is uniformly convex. In this case, the duality mapping $J\colon X\rightrightarrows X^{*}$, given by
    \begin{align*}
        J(x) := \{x^{*}\in X^{*}: \langle x^{*},x\rangle = \|x\|_{X}^{2} = \|x^{*}\|_{X^{*}}^{2}\} \quad\forall x\in X,
    \end{align*}
    is single-valued. 
\end{remark}

Before giving a detailed proof of the main theorem, a consequence concerning \textit{convex sweeping processes} is presented. As observed by Vladimirov \cite{MR1124122}, when $A(t)=N_{C(t)}(\cdot)$ for a family of closed convex sets $(C(t))_{t\in[0,T]}$, Vladimirov's absolute continuity condition for the set-valued mapping $t\rightrightarrows A(t)$ is equivalent to the absolute continuity of the set-valued mapping $t\rightrightarrows C(t)$, since:
\begin{align*}
    \operatorname{dis}(A(t),A(s)) = \operatorname{Haus}(C(t),C(s)),
\end{align*} 
where $\operatorname{Haus}(\cdot,\cdot)$ denotes the Hausdorff distance. This result yields the existence of solutions to the following Volterra sweeping process:
\begin{align}
    \label{eq:ConvexSweepingProcess}
    \begin{cases}
        \displaystyle\dot{x}(t) \in -N_{C(t)}(x(t)) + F(t,x(t))+\int_{0}^{t} g(t,s,x(s))ds\quad\textrm{a.e. }t\in[0,T],\\
        x(0)=h(x(\cdot))\in C(0).
    \end{cases}
\end{align}
Indeed, this problem corresponds to the previous theorem with $A(t)=N_{C(t)}$. In this setting, Vladimirov's absolute continuity condition can be expressed as follows: 
\begin{enumerate}
    \item[$(\mathcal{H}_{C})$] Let $C\colon[0,T]\rightrightarrows\mathcal{H}$ be a set-valued mapping with nonempty, closed, and convex values, such that
    \begin{align*}
        \operatorname{Haus}(C(t),C(s)) \leq |a(t)-a(s)|\qquad\forall t,s\in[0,T],
    \end{align*}
    for some absolutely continuous function $a\colon[0,T]\to\mathbb{R}$.
\end{enumerate}

\begin{corolario}[Volterra sweeping process]
    \label{cor:VolterraSweepingProcess}
    Assume that $(\mathcal{H}_{C})$, $(\mathcal{H}_{F})$, $(\mathcal{H}_{F}^{3})$, $(\mathcal{H}_{F}^{4})$, $(\mathcal{H}_{g})$, and $(\mathcal{H}_{h})$ hold, where $(\mathcal{H}_{h})$ is understood with $C(0)$ in place of $\overline{D(A)}$. Assume also that
    \begin{align*}
        L_{h}\exp\left[\int_{0}^{t}\left(L_{F}(s)+\int_{0}^{s}L_{g}(s,\sigma)d\sigma\right)ds\right]<1\qquad\forall t\in[0,T].
    \end{align*}
    Then, there exists a solution $x(\cdot)\in W^{1,1}([0,T];\mathcal{H})$ of \eqref{eq:ConvexSweepingProcess}.
\end{corolario}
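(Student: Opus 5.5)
The plan is to obtain Corollary \ref{cor:VolterraSweepingProcess} as a direct specialization of Theorem \ref{theo:Existence-MainResult}. Set $A(t):=N_{C(t)}(\cdot)$ for every $t\in[0,T]$ and check that $(\mathcal{H}_{A})$ holds. Since $C(t)$ is nonempty, closed and convex, $N_{C(t)}=\partial\delta_{C(t)}$ is maximal monotone with $D(A(t))=C(t)=\overline{D(A(t))}$. In particular $\overline{D(A(0))}=C(0)$, which justifies reading $(\mathcal{H}_{h})$, $(\mathcal{H}^{3}_{F})$, $(\mathcal{H}^{4}_{F})$ and $(\mathcal{H}_{g})$ with $C(\cdot)$ in place of $\overline{D(A(\cdot))}$.

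For Vladimirov's condition \eqref{eq:vlad}, I will invoke the identity $\operatorname{dis}(N_{C(t)},N_{C(s)})=\operatorname{Haus}(C(t),C(s))$ recalled just before the corollary. Together with $(\mathcal{H}_{C})$ it gives $\operatorname{dis}(A(t),A(s))\le|a(t)-a(s)|$ with the same absolutely continuous $a$. The linear growth condition in $(\mathcal{H}_{A})$ is trivial: for $x\in C(t)$ we have $0\in N_{C(t)}(x)$, so $d(0,A(t)x)=0$ and one may take $c\equiv0$. Hence all structural hypotheses of Theorem \ref{theo:Existence-MainResult} hold. Its conclusion then yields an absolutely continuous solution $x$ with $\|\dot x(t)\|\le M(1+\|\mu\|_{L^1})(|\dot a(t)|+\mu(t))$, so in particular $x\in W^{1,1}([0,T];\mathcal{H})$.

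The only real point, and the expected obstacle, is the smallness condition. The corollary assumes $L_{h}\exp[\int_0^T(L_F+\int_0^sL_g)]<1$, whereas \eqref{eq:ContractionConditionForMainTheorem} carries a factor $2$ in the exponent, so the hypothesis of the corollary is weaker. I would resolve this by revisiting the contraction step in the proof of Theorem \ref{theo:Existence-MainResult}. Take two solutions $x_1,x_2$ of the Cauchy problem with initial data $h(z_1),h(z_2)$ and selections realizing the OSL bound, obtained from Lemma \ref{lema:OptimalMeasurableSelection} and $(\mathcal{H}^4_F)$. For $\varphi=\tfrac12\|x_1-x_2\|^2$ one obtains
\begin{align*}
\dot\varphi(t)\le 2L_F(t)\varphi(t)+2\sqrt{\varphi(t)}\int_0^tL_g(t,s)\sqrt{\varphi(s)}\,ds .
\end{align*}
By Lemma \ref{lem:EnhancedDifferentialGronwallInequality}, with $\varepsilon=k_1=k_5=0$, this gives $\varphi(t)\le\varphi(0)\exp[\int_0^t(2L_F+2\int_0^sL_g)]$. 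Taking square roots halves the exponent:
\begin{align*}
\|x_1(t)-x_2(t)\|\le L_h\|z_1-z_2\|_\infty\exp\Big[\int_0^t\Big(L_F(s)+\int_0^sL_g(s,\sigma)d\sigma\Big)ds\Big].
\end{align*}
The Lipschitz constant of the solution map $z\mapsto x$ in the $\sup$-norm is therefore exactly the quantity bounded in the corollary's hypothesis, which is $<1$. So the fixed-point argument (Nadler's Proposition \ref{prop:NadlerFixedPoint}, or Banach's theorem, as in the main proof) applies under the corollary's assumption.

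I would present this as a remark: the factor $2$ in \eqref{eq:ContractionConditionForMainTheorem} refers to the squared distance, and the norm estimate only needs the exponent without it. I would then conclude by applying Theorem \ref{theo:Existence-MainResult}, or its proof, with this sharpened constant.
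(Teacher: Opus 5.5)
Your verification of $(\mathcal{H}_{A})$ for $A(t)=N_{C(t)}$ (maximal monotonicity, $\operatorname{dis}=\operatorname{Haus}$, and $c\equiv 0$) is exactly the paper's proof, which then simply invokes Theorem \ref{theo:Existence-MainResult}. You are also right that the corollary's smallness hypothesis lacks the factor $2$ in \eqref{eq:ContractionConditionForMainTheorem}. It is therefore weaker, and the theorem as stated does not literally apply. The paper does not comment on this. However, the contraction step in its proof of Theorem \ref{theo:Existence-MainResult} only uses the constant $L_{h}\exp[\int_0^T(L_F+\int_0^sL_g)]$. Since the integrand is nonnegative, the ``$\forall t$'' hypothesis is just the case $t=T$.

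The gap is in how you justify this sharpened constant. You take two solutions $x_1,x_2$ ``and selections realizing the OSL bound''. For set-valued $F$, the selections $f_i(t)\in F(t,x_i(t))$ belong to the given solutions and cannot be chosen afterwards. The OSL condition only controls support functions. The inequality $\langle f_1-f_2,x_1-x_2\rangle\le L_F\|x_1-x_2\|^2$ follows from $(\mathcal{H}_F^4)$ only if $f_2(t)$ maximizes $\langle\cdot,x_1(t)-x_2(t)\rangle$ over $F(t,x_2(t))$.

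For arbitrary pairs of solutions your differential inequality is false. Take $\mathcal{H}=\mathbb{R}$, $C(t)\equiv\mathbb{R}$, $F\equiv[-1,1]$ and $g\equiv 0$. Then $L_F=0$, yet $x_1(t)=t$ and $x_2(t)=-t$ both solve the problem from $0$. So there is no single-valued solution map $z\mapsto x$, and Banach's theorem is unavailable.

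What is needed instead is a Hausdorff estimate between solution sets. Given $x_1\in\operatorname{Sol}(x_0)$, one must \emph{construct} a solution from $y_0$ whose selection is the OSL maximizer relative to $x_1$. That requirement is implicit in the unknown trajectory. Handling it is exactly what the $\varepsilon$-solution and Filippov--Plis machinery of Lemmas \ref{lem:ExistenceResultPart1}--\ref{lem:ExistenceResultPart2} does, culminating in Lemma \ref{lem:ExistenceResultPart3}. Your Gr\"onwall computation is valid only when $F(t,\cdot)$ is single-valued.

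The repair is to drop that computation and cite Lemma \ref{lem:ExistenceResultPart3}, whose exponent is already $\int_0^T(L_F+\int_0^sL_g)$. Combine it with $(\mathcal{H}_h)$ and Nadler's Proposition \ref{prop:NadlerFixedPoint}, exactly as in the proof of Theorem \ref{theo:Existence-MainResult}.
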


\begin{proof}
    For each $t\in[0,T]$, define $A(t):=N_{C(t)}$. Since $C(t)$ is nonempty, closed, and convex, $A(t)$ is maximal monotone and $D(A(t))=C(t)$. Moreover, by $(\mathcal{H}_{C})$ and the identity
    \begin{align*}
        \operatorname{dis}(N_{C(t)},N_{C(s)})=\operatorname{Haus}(C(t),C(s)),
    \end{align*}
    the family $A(t)$ satisfies Vladimirov's absolute continuity condition. Finally, since $0\in N_{C(t)}(x)$ for every $x\in C(t)$, one has
    \begin{align*}
        d(0,A(t)(x))=0\qquad\forall x\in D(A(t)).
    \end{align*}
    Hence, the growth condition in $(\mathcal{H}_{A(t)})$ is satisfied with $l_{A}\equiv0$. Therefore, the non-autonomous version of Theorem \ref{theo:Existence-MainResult} can be applied with $A(t)=N_{C(t)}$, which yields a solution of \eqref{eq:ConvexSweepingProcess}. 
\end{proof}

If $A(t)=A$ for every $t\in[0,T]$, where $A\colon D(A)\rightrightarrows\mathcal{H}$ is a fixed maximal monotone operator, that is, if the part of the dynamics governed by the maximal monotone operator is autonomous, the hypothesis $(\mathcal{H}_{A})$ becomes restrictive. Indeed, there exist maximal monotone operators that do not satisfy, in particular, the condition $(\mathcal{H}_{A}^{2})$. A classical example is given by the Laplace operator defined on $L^{2}(\Omega)$, where $\Omega\subseteq\mathbb{R}^{n}$ is a sufficiently regular open set. In this setting, $(\mathcal{H}_{A}^{2})$ would require a growth estimate for the minimal section of the operator in terms of the $L^{2}$-norm of the state, which is not satisfied in general by the Laplace operator.
The technique introduced below to establish the existence of solutions to \eqref{eq:DynamicWithNonLocalInitialCondition} remains valid in the autonomous setting without requiring the hypothesis $(\mathcal{H}_{A}^{2})$. In this situation, the Brézis-Komura Theorem (see \cite{MR348562}) can be used as the basic existence result for the Cauchy problem
\begin{align*}
    \begin{cases}
        \dot{x}(t)\in -Ax(t)+f(t)\quad\textrm{a.e. }t\in[0,T],\\
        x(0)=x_{0}\in\overline{D(A)},
    \end{cases}
\end{align*}
instead of Theorem \ref{Kunze1} of Kunze and Monteiro Marques. Consequently, the autonomous setting can be treated under weaker assumptions on the maximal monotone operator, while preserving the remaining arguments concerning the perturbation and the nonlocal initial condition. This observation is particularly relevant since a large class of important examples falls within the autonomous framework.

\begin{teorema}[Existence of solutions for nonlocal problem with autonomous operators]
   \label{theo:Existence-MainResult2}
   Let $A: \mathcal{H}\rightrightarrows \mathcal{H}$ be a maximal monotone operator. Assume that  $(\mathcal{H}_{F})$, $(\mathcal{H}^{3}_{F})$, $(\mathcal{H}^{4}_{F})$, $(\mathcal{H}_{g})$ and $(\mathcal{H}_{h})$ hold. If
    \begin{eqnarray*}
        L_{h}\exp\left[\int_{0}^{T} \left( 2L_{F}(s) + 2\int_{0}^{s}L_{g}(s,\sigma)d\sigma\right)ds\right]<1,
    \end{eqnarray*}
    then there exists a solution $x$ to the following differential inclusion
    \begin{align*}
        \begin{cases}
            \dot{x}(t) \in  -Ax(t) + F(t,x(t)) +  \displaystyle\int_{0}^{t} g(t,s,x(s))ds,\\
            x(0)=h(x(\cdot))\in\overline{D(A)}.    
        \end{cases}
    \end{align*}
\end{teorema}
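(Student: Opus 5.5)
The plan is to reproduce, in the autonomous setting, the three-stage scheme used for Theorem \ref{theo:Existence-MainResult}. The basic well-posedness tool changes: Corollary \ref{Kunze3} is replaced by the Brézis--Komura theorem. For every $x_{0}\in\overline{D(A)}$ and $f\in L^{1}([0,T];\mathcal{H})$, it gives a unique strong solution $x(x_{0},f)\in C([0,T];\mathcal{H})$ of $\dot x\in -Ax+f$, $x(0)=x_0$. Moreover, for two such solutions,
\begin{equation*}
\|x(x_0,f)(t)-x(y_0,k)(t)\|\le \|x_0-y_0\|+\int_0^t\|f(s)-k(s)\|ds .
\end{equation*}
Since $(\mathcal{H}_A^2)$ is dropped, we no longer get pointwise bounds on $\dot x$. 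All compactness must therefore come from this contraction estimate and from the OSL structure, not from Lemma \ref{compactness}. Accordingly, solutions are understood as $x=x(x_0,f)$ in the Brézis sense, and the definitions of solution and $\varepsilon$-solution are read this way.

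\emph{Step 1 ($\varepsilon$-solutions and bounds).} I will repeat the proof of Theorem \ref{teo:ExistenceOfEpsilonSolutions} verbatim. The recurrence with $f_j=\operatorname{Proj}_{F(t,x_{j-1})}(0)$ is kept, and the fixed point of $\varphi$ again comes from Proposition \ref{prop:GeneralizedHistoryDependentFixedPoint}: the estimate for $\|x_1-x_2\|$ used only monotonicity of $A$, which the above contraction inequality supplies directly. The Zorn argument is also unchanged. The bound $\|x\|_\infty\le M$ of Theorem \ref{teo:BoundsForEpsilonSolutionsAndHisSelections} uses only monotonicity, the reference trajectory $y$ solving $\dot y\in -Ay$, and Lemma \ref{lem:EnhancedDifferentialGronwallInequality}. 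It therefore holds, together with the dominating function $\mu$ for $F(t,x+\varepsilon\mathbb B)$ and for the Volterra term. Only \eqref{xdot1} is lost.

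\emph{Step 2 (from $\varepsilon$-solutions to a solution for fixed $x_0$).} I will follow the Dontchev-type argument of \cite{MR1910880}. Take $\varepsilon_n\downarrow0$ and $\varepsilon_n$-solutions $x_n=x(x_0,f_n)$. Using the OSL condition $(\mathcal{H}_F^4)$ for the convexified map $G(t,x)=\operatorname{co}F(t,x+\varepsilon\mathbb B)$, and Lemma \ref{lema:OptimalMeasurableSelection} to choose selections realizing $H_F$, I will compare $x_n$ and $x_m$. Monotonicity of $A$ and $(\mathcal{H}_g)$ give
\begin{equation*}
\tfrac{d}{dt}\tfrac12\|x_n-x_m\|^2\le L_F\|x_n-x_m\|^2+\|x_n-x_m\|\int_0^tL_g\|x_n-x_m\|ds+\delta_{n,m}(t),
\end{equation*}
where $\delta_{n,m}\to0$ in $L^1$. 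Here $\delta_{n,m}$ is controlled by $(\varepsilon_n+\varepsilon_m)\mu(t)$ and the bound $M$. Lemma \ref{lem:EnhancedDifferentialGronwallInequality} then shows that $(x_n)$ is Cauchy in $C([0,T];\mathcal{H})$, so $x_n\to x$ uniformly. Since $f_n$ is dominated by $\mu$ and takes values in weakly compact sets, a subsequence satisfies $f_n\rightharpoonup f$ in $L^1$ (Dunford--Pettis). Proposition \ref{prop:Closedness}, applied to $f_n-\int_0^tg$ with the Volterra term converging strongly by $(\mathcal{H}_g)$, gives $f(t)\in F(t,x(t))+\int_0^tg(t,s,x(s))ds$. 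Finally, $x=x(x_0,f)$ follows from the weak--strong closedness of the Brézis solution map, which is a standard consequence of monotonicity via the integrated inequality $\frac12\|x(t)-v\|^2\le\frac12\|x(s)-v\|^2+\int_s^t\langle f-w,x-v\rangle$ for $(v,w)\in G(A)$.

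\emph{Step 3 (nonlocal condition).} Let $S(z)\subset C([0,T];\mathcal{H})$ be the set of solutions with $x(0)=h(z)$. For solutions $x$ with initial datum $h(z_1)$ and a given OSL-compatible comparison, I will construct, by the same Lemma \ref{lema:OptimalMeasurableSelection}/Gronwall argument as in Step 2, a solution $y$ with initial datum $h(z_2)$ satisfying
\begin{equation*}
\|x(t)-y(t)\|^2\le\|h(z_1)-h(z_2)\|^2\exp\Big[\int_0^t\big(2L_F+2\textstyle\int_0^sL_g\big)ds\Big].
\end{equation*}
This yields $\operatorname{Hauss}(S(z_1),S(z_2))\le L_h e^{\int_0^T(L_F+\int L_g)}\|z_1-z_2\|_\infty$, a contraction by the hypothesis. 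I also need the values $S(z)$ to be closed (by the closure argument of Step 2) and bounded (by Step 1). Proposition \ref{prop:NadlerFixedPoint} then gives $x\in S(x)$, i.e.\ a solution of the nonlocal problem.

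The main obstacle is producing, for a given solution $x$, a nearby solution $y$ from a different initial point. This needs a Filippov-type construction: $\varepsilon$-solutions whose selections are chosen via $H_F(t,\cdot,x-y)$ so that OSL applies, followed by the passage $\varepsilon\to0$ from Step 2. All of this has to be done with only $C$-compactness from the Cauchy argument, since no derivative bounds are available.
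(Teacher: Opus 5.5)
Your Step 2 fails as stated, and the construction you postpone to the end is exactly the part of the proof that does the work. The OSL condition $(\mathcal{H}_F^4)$ only bounds a difference of support functions, $H_F(t,a,a-b)-H_F(t,b,a-b)\le L_F(t)\|a-b\|^2$. It therefore controls $\langle v_n-v_m,x_n-x_m\rangle$ only when the selection $v_m$ (nearly) attains the support function at its base point in the direction of the \emph{other} trajectory. For independently given $\varepsilon_n$-solutions, no inequality of your form with $\delta_{n,m}\to0$ can hold. Take $A=0$, $g\equiv0$, $F\equiv\mathbb{B}$ (OSL with $L_F=0$) and a unit vector $e$. Then $x_n(t)=(-1)^n t e$ are exact solutions from $x_0=0$, hence $\varepsilon_n$-solutions for every $\varepsilon_n$, yet $\|x_n-x_{n+1}\|_\infty=2T$. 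Nor can selections \emph{realizing} $H_F$ be chosen pairwise. A selection for $x_m$ can be tuned to the direction of only one other trajectory. Even then, that direction $x_n(t)-x_m(t)$ contains the unknown $x_m$, so the choice is implicit. Your Step 3 phrase about selections chosen via $H_F(t,\cdot,x-y)$ is circular in the same way.

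The missing idea is the paper's Lemma \ref{lem:ExistenceResultPart1}. Start from an $\varepsilon$-solution $x$ with $f_x(t)-\int_0^t g(t,s,x(s))ds\in F(t,x(t)+w(t))$ and $\|w(t)\|\le\varepsilon$. One builds a $\delta$-solution $y$ by freezing the base point $y_{k-1}$ on $[t_{k-1},t_k)$; the interval ends when $\|y-y_{k-1}\|$ reaches $\delta$. On it one takes $v_k(t)\in F(t,y_{k-1})$ attaining $H_F(t,y_{k-1},x(t)+w(t)-y_{k-1})$. This is now a \emph{known} measurable direction, so Lemma \ref{lema:OptimalMeasurableSelection} applies. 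Each Volterra piece is solved by Proposition \ref{prop:GeneralizedHistoryDependentFixedPoint}, and Zorn's lemma carries the construction to $T$. OSL is then applied between $x(t)+w(t)$ and $y_{k-1}$. The discrepancies $w$ and $y-y_{k-1}$ produce only error terms of order $\varepsilon+\delta$, controlled by $\mu$, $L_F$, $L_g$, giving $\|x-y\|_\infty\le\|x_0-y_0\|e^{(\cdots)}+C_1(\varepsilon+\delta)$.

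Your Step 2 sequence must then be \emph{built}: $x_{n+1}$ from $x_n$, with the same initial point. Only consecutive gaps $\le C_1(\varepsilon_n+\varepsilon_{n+1})$ are estimated, and the Cauchy property comes from telescoping with $\sum\varepsilon_n<\infty$ (Lemma \ref{lem:ExistenceResultPart2}). Step 3 is the same lemma applied with $x$ an exact solution. It gives $d_{\operatorname{Sol}(h(z_2))}(x)\le\|h(z_1)-h(z_2)\|e^{(\cdots)}+2\varepsilon C_1$ for every $\varepsilon$. That is a distance bound, not an exact $y$ with your sharp estimate, which would need compactness of the solution set. The distance bound is all Nadler's theorem uses.

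Once this lemma is supplied, the rest of your plan follows the paper's route. Your limit passage via the Br\'ezis contraction and the integral inequality for $(v,w)\in G(A)$ is sounder than the paper's, which relies on \eqref{xdot1} and Lemma \ref{compactness}, both resting on $(\mathcal{H}_A^2)$. One caution: for $x_0\in\overline{D(A)}$ and $f\in L^1$, Br\'ezis--Komura gives only a weak (integral) solution, not necessarily absolutely continuous. So every $\frac{d}{dt}\frac12\|\cdot\|^2$ estimate must be run in the integrated form $\frac12\|x(t)-y(t)\|^2\le\frac12\|x(s)-y(s)\|^2+\int_s^t\langle f_x-f_y,x-y\rangle d\tau$, which does hold for weak solutions in a Hilbert space.
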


\subsection{Proof of the Existence of Solutions for Nonlocal Problem with Memory}

The proof is divided into three Lemmas. The following lemma provides a pointwise estimate between an $\varepsilon$-solution and a $\delta$-solution, for any $\delta<\varepsilon$, where the one-sided Lipschitz property of $F$ plays a fundamental role.

\begin{lema}
    \label{lem:ExistenceResultPart1}
    Assume that $(\mathcal{H}_{A})$, $(\mathcal{H}_{F})$, $(\mathcal{H}^{3}_{F})$, $(\mathcal{H}_{F}^{4})$ and $(\mathcal{H}_{g})$ hold. Pick $x_{0},y_{0}\in \overline{D(A)}$. Then, there exists $C_{1}>0$ such that for all $\varepsilon\in(0,1)$, for all $\varepsilon$-solutions $x(\cdot)\in W^{1,1}([0,T];\mathcal{H})$ of \eqref{eq:DynamicWithLocalInitialCondition} with $x(0)=x_{0}$ and for all $\delta\in(0,\varepsilon)$, there exists $\delta$-solution $y(\cdot)\in W^{1,1}([0,T];\mathcal{H})$ of \eqref{eq:DynamicWithLocalInitialCondition} with $y(0)=y_{0}$, such that:
    \begin{equation}\label{eq:lem:ExistenceResultPart1}
       \|x(t)-y(t)\|
        \leq \|x_{0}-y_{0}\| \exp\Big(\int_{0}^{t} \big[2(\varepsilon+\delta)L_F(s)+L_F(s)+\int_0^sL_g(s,\tau)d\tau+(\varepsilon+\delta)s\big]ds\Big)+ (\varepsilon+\delta)C_1
    \end{equation}
\end{lema}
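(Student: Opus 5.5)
We adapt the Dontchev--Farkhi type argument (\cite[Lemma 2]{MR1910880}) by rerunning the recursive construction of Theorem \ref{teo:ExistenceOfEpsilonSolutions}, but with the selection chosen \emph{relative to} $x(\cdot)$ rather than as the minimal-norm element. Fix the $\varepsilon$-solution $x=x(x_0,f_x)$, with $f_x(t)=u(t)+\int_0^t g(t,s,x(s))ds$ and $u(t)\in F(t,x(t)+\varepsilon\mathbb{B})$. By Theorem \ref{teo:BoundsForEpsilonSolutionsAndHisSelections}, $\|x\|_\infty\le M$ and $\|u(t)\|\le\mu(t)$; the same bounds hold for every $\delta$-solution $y$ built below. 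The constant $C_1$ will depend only on $M$, $\mu$, $L_F$ and $L_g$.

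\emph{Construction of $y$.} Start at $t_0=0$, $y(0)=y_0$. On step $j$, with current node value $y_{j-1}=y(t_{j-1})$, we apply Lemma \ref{lema:OptimalMeasurableSelection} to the convex continuous function $\phi(t,v)=\|u(t)-v\|$. This gives a measurable $f_j(t)\in F(t,y_{j-1})$ realizing $d(u(t),F(t,y_{j-1}))$. Equivalently, we may take the selection maximizing $\langle y_{j-1}-x(t),v\rangle$, which is what the OSL estimate needs. We then solve $(P_j)$ with this $f_j$ through the generalized almost history-dependent fixed point argument of Theorem \ref{teo:ExistenceOfEpsilonSolutions}, and stop at $t_j$, the first exit of $y$ from $\mathbb{B}(y_{j-1};\delta)$. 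The Zorn's lemma extension argument is then copied verbatim and yields a $\delta$-solution $y$ on $[0,T]$ with $f_y(t)=v(t)+\int_0^tg(t,s,y(s))ds$, where $v(t)\in F(t,\bar y(t))$ and $\|\bar y(t)-y(t)\|\le\delta$, $\bar y$ denoting the piecewise-constant node function. The selection $v(t)$ is chosen to be optimal against $u(t)$.

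\emph{Key estimate.} Monotonicity of $A(t)$ applied to $-\dot x+f_x\in A(t)x$ and $-\dot y+f_y\in A(t)y$ gives, for $\theta=\frac12\|x-y\|^2$,
\[
\dot\theta\le\langle u-v,x-y\rangle+\|x-y\|\int_0^tL_g(t,s)\|x(s)-y(s)\|ds.
\]
To bound $\langle u-v,x-y\rangle$, write $u(t)\in F(t,\bar x(t))$ with $\|\bar x(t)-x(t)\|\le\varepsilon$. The maximizing choice of $v$ gives $\langle u-v,\bar x-\bar y\rangle\le H_F(t,\bar x,\bar x-\bar y)-H_F(t,\bar y,\bar x-\bar y)\le L_F\|\bar x-\bar y\|^2$. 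Replacing $\bar x,\bar y$ by $x,y$ costs terms of order $(\varepsilon+\delta)(\|u\|+\|v\|)\le2(\varepsilon+\delta)\mu(t)$. In addition, $\|\bar x-\bar y\|^2\le(\|x-y\|+\varepsilon+\delta)^2$ produces $L_F\|x-y\|^2+2(\varepsilon+\delta)L_F\|x-y\|+(\varepsilon+\delta)^2L_F$. Using $\|x-y\|\le 2M$ where needed, we obtain an inequality of the form of Lemma \ref{lem:EnhancedDifferentialGronwallInequality}, with $k_2=2L_F+O(\varepsilon+\delta)L_F$, $k_3=2$, $k_4=L_g$, $k_1=0$, and forcing term $\varepsilon(t)=(\varepsilon+\delta)\,c(t)$, where $c\in L^1$ is built from $\mu$ and $L_F$. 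The extra $(\varepsilon+\delta)s$ in the exponent comes from a Young-type splitting $(\varepsilon+\delta)\|x-y\|\le\frac{\varepsilon+\delta}{2}(1+\|x-y\|^2)$, which is integrated as in the proof of that lemma. Applying the lemma, taking square roots and using $\sqrt{a+b}\le\sqrt a+\sqrt b$ yields \eqref{eq:lem:ExistenceResultPart1}. We keep the exponent bookkeeping aligned with the stated form: the half powers absorb the factor $2$ on $L_F$ and on $\int L_g$.

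\emph{Main obstacle.} The delicate step is the pairing inequality. The OSL property only controls $H_F$ evaluated in the direction $\bar x-\bar y$, whereas the natural selection is made at node values $y_{j-1}$ fixed in advance, while $x(t)$ keeps varying. We will handle this by choosing $v(t)$ pointwise, through Lemma \ref{lema:OptimalMeasurableSelection} with $\phi(t,w)=-\langle y_{j-1}-\bar x(t),w\rangle$, measurable since $\bar x$ can be taken measurable, and by comparing with the direction $\bar x-\bar y$ only up to errors of size $\varepsilon+\delta$ times $\mu$. We must also check that this choice of $f_j$ remains in $L^1$, which follows from $(\mathcal{H}^3_F)$, so that Corollary \ref{Kunze3} and the fixed point step still apply.
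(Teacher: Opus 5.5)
Your route is the paper's: freeze the node $y_{j-1}$, pick $v\in F(t,y_{j-1})$ realizing the support function in the direction $\bar x(t)-y_{j-1}$ (the paper's $x(t)+w(t)-y_{j-1}$), run the recursion and the Zorn extension, then combine OSL, monotonicity of $A(t)$ and Lemma~\ref{lem:EnhancedDifferentialGronwallInequality}. The gap is in the last step.

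The cross term you isolate, $\langle u-v,(x-\bar x)-(y-\bar y)\rangle\le 2(\varepsilon+\delta)\mu(t)$, enters $\dot\theta$ additively, with no factor $\sqrt{\theta}$. So the Gr\"onwall lemma only gives
\[
\|x(t)-y(t)\|^{2}\le\|x_{0}-y_{0}\|^{2}\exp\Big(\int_{0}^{t}(\cdots)\,ds\Big)+2(\varepsilon+\delta)\int_{0}^{t}c(s)E(t,s)\,ds.
\]
Taking square roots then produces an error $\sqrt{\varepsilon+\delta}\,\big(2\int_{0}^{T}c(s)E(T,s)\,ds\big)^{1/2}$. Since $\sqrt{\varepsilon+\delta}/(\varepsilon+\delta)\to\infty$ as $\varepsilon+\delta\to0$, no $C_{1}$ independent of $\varepsilon,\delta$ turns this into $(\varepsilon+\delta)C_{1}$. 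A linear error would require every $(\varepsilon+\delta)$-term to appear multiplied by $\sqrt{\theta}$ (a $k_{1}$-type term), and this cross term is not of that form.

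The paper's proof has the same defect. It ends with $\theta(t)\le\theta(0)\exp(\cdots)+2(\varepsilon+\delta)\int_{0}^{t}(\cdots)E_{\varepsilon,\delta}(t,s)\,ds$ and then defines $C_{1}$ as a square root, dropping the factor $\sqrt{\varepsilon+\delta}$ without comment. What both arguments actually establish is \eqref{eq:lem:ExistenceResultPart1} with $\sqrt{\varepsilon+\delta}\,C_{1}$ in place of $(\varepsilon+\delta)C_{1}$. That is enough downstream: in Lemma~\ref{lem:ExistenceResultPart2} choose $\varepsilon_{n}$ with $\sum_{n}\sqrt{\varepsilon_{n}}$ small, and in Lemma~\ref{lem:ExistenceResultPart3} the error still vanishes as $\varepsilon\downarrow0$. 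You should state and prove that version rather than claim the linear one.

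Three further fixes.

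\emph{Sign of the selection.} The selection you write down explicitly has the wrong sign. Maximizing $\langle y_{j-1}-x(t),v\rangle$, or minimizing $\phi(t,w)=-\langle y_{j-1}-\bar x(t),w\rangle$, selects the point of $F(t,y_{j-1})$ where $\langle\bar x(t)-y_{j-1},\cdot\rangle$ is \emph{smallest}. Then OSL gives no control: for $F\equiv\mathbb{B}$, which has $L_{F}=0$, one can get $\langle u-v,\bar x-\bar y\rangle=2\|\bar x-\bar y\|$. You need $\phi(t,w)=-\langle\bar x(t)-y_{j-1},w\rangle$, as in \eqref{vsel}. Your key-estimate paragraph already uses the correct identity $\langle v,\bar x-\bar y\rangle=H_{F}(t,\bar y,\bar x-\bar y)$, so this is a slip, but it must be corrected.

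\emph{Nearest-point selection.} The nearest-point selection realizing $d(u(t),F(t,y_{j-1}))$ is not equivalent to the support-maximizing one and would not work. It only yields $\langle u-v,x-y\rangle\le d(u(t),F(t,\bar y(t)))\|x-y\|$, and bounding that distance needs Hausdorff--Lipschitz continuity of $F$, which OSL does not give.

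\emph{Constants.} The constants $M,\mu$ of Theorem~\ref{teo:BoundsForEpsilonSolutionsAndHisSelections} depend on the initial point. For $y$ you must use those attached to $y_{0}$ (or the larger of the two); this is harmless since $C_{1}$ may depend on $x_{0},y_{0}$.
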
 

\begin{proof}
    Let $\varepsilon>0$ and $\delta\in(0,\varepsilon)$. Let $x(\cdot)$ be an $\varepsilon$-solution   of \eqref{eq:DynamicWithLocalInitialCondition} such that $x(0)=x_{0}$.   Then, Lemma \ref{lema:OptimalMeasurableSelection} ensures the existence of a measurable, hence integrable by Theorem \ref{teo:BoundsForEpsilonSolutionsAndHisSelections},  selection $f$ satisfying 
    \begin{eqnarray*}
        f_{x}(t) \in F(t,x(t)+w(t)) + \int_{0}^{t} g(t,s,x(s))ds\quad\textrm{for a.e. }t\in[0,T],
    \end{eqnarray*}
    such that $x(\cdot)=x(x_{0},f_{x})(\cdot)$, where $w(\cdot)$ is a  measurable function satisfying $\|w(t)\|<\varepsilon$ for a.e. $t\in[0,T]$. Next, a suitable $\delta$-solution will be constructed. For this purpose, a recursive construction similar to that used in the proof of Theorem \ref{teo:ExistenceOfEpsilonSolutions} will be carried out.
    
    \noindent$\circ$ \textit{Recursive construction of a $\delta$-solution:} Let $t_{0}:=0$. A sequence $(y_{k})_{k}$ of elements in $\mathcal{H}$, an increasing sequence $(t_{k})_{k}$ such that $\pi:=\{t_{k}:k\in\mathbb{N}\}$ is a partition of a subinterval $[0,t_{\infty}]$ of $[0,T]$, where $t_{\infty}:=\lim_{k\to\infty}t_{k}$, and a sequence $(y_{\pi}^{k}(\cdot))_{k\geq1}$ of absolutely continuous functions $y_{\pi}^{k}\colon[t_{k-1},t_{k}]\to\mathcal{H}$ are recursively constructed as follows:
    \begin{align*}
        y_{k}:=
        \begin{cases}
            y_{0}                  
            &\textrm{if $k=0$,}\\
            y_{\pi}^{k}(t_{k}) 
            &\textrm{if $k\geq 1$, in which case it is assumed that for every $j=1,\dots,k$, there exists $t_{j}\in[0,T]$}\\
            &\textrm{and $y_{\pi}^{j}(\cdot)\in W^{1,1}([t_{j-1},t_j];\mathcal{H})$, such that $y_{\pi}^{j}(\cdot)$ is the unique solution to $(Q_{j})$, given by:}\\
            &(Q_{j}) \begin{cases}
                \displaystyle
                \dot{y}(t)\in -A(t)y(t)+v_{j}(t)
                +\int_{t_{j-1}}^{t}g(t,s,y(s))ds\\
                \qquad\qquad\qquad\qquad\qquad\quad\displaystyle
                +\sum_{i=1}^{j-1}\int_{t_{i-1}}^{t_{i}}g(t,s,y_{\pi}^{i}(s))ds
                \quad \textrm{a.e. }t\in[t_{j-1},t_j],\\
                y(t_{j-1})=y_{j-1},
            \end{cases}\\
            &t_{j} := \sup\{t\geq t_{j-1}:\|y_{\pi}^{j}(t)-y_{j-1}\|<\delta\},   \\
            &\textrm{and $v_{j}\in L^{1}([t_{j-1},T];\mathcal{H})$ is an optimal measurable selection of $F(\cdot, y_{j-1})$ satisfying}\\
            &\displaystyle\sup_{v\in F(t,y_{j-1})} \langle x(t)+w(t)-y_{j-1},v\rangle
        = \langle x(t)+w(t)-y_{j-1},v_{j}(t)\rangle \quad\textrm{for a.e. }t\in[t_{j-1},T].
        \end{cases}
    \end{align*}
    It is verified that the sequence $(v_{k}(\cdot),y_{\pi}^{k}(\cdot),t_{k},y_{k})_{k\geq1}$ is well defined. Indeed, let $k\geq1$ and $(t_{k-1},y_{k-1})\in[0,T]\times\mathcal{H}$. It is clear that the set-valued mapping $t\rightrightarrows F(t,y_{k-1})$ satisfies the assumptions of Lemma \ref{lema:OptimalMeasurableSelection}. Since the mapping $t\in[0,T]\mapsto x(t)+w(t)-y_{k-1}\in\mathcal{H}$ is measurable, there exists a measurable (integrable)  selection $v_{k}\colon[t_{k-1},T]\to\mathcal{H}$ such that $v_{k}(t)\in F(t,y_{k-1})$ for a.e. $t\in[t_{k-1},T]$, and
    \begin{equation}\label{vsel}
        \sup_{v\in F(t,y_{k-1})} \langle x(t)+w(t)-y_{k-1},v\rangle =\langle x(t)+w(t)-y_{k-1},v_{k}(t)\rangle\quad\textrm{for a.e. }t\in[t_{k-1},T].
    \end{equation}
    Moreover, by Theorem \ref{Kunze1}, there exists a unique solution $y_{\pi}^{k}(\cdot)\in W^{1,1}([t_{k-1},T];\mathcal{H})$ to $(Q_{k})$. By the absolute continuity of the integral of nonnegative functions, there exists $\overline{t}\in(t_{k-1},T]$ such that
    \begin{align*}
        \|y_{\pi}^{k}(t) - y_{k-1}\| 
        \leq \int_{t_{k-1}}^{t} \|\dot{y}_{\pi}^{k}(s)\|ds 
        < \delta 
        \quad\forall t\in(t_{k-1},\overline{t}].
    \end{align*}
    Hence, the set $\left\{ t\in[t_{k-1},T]:  \|y_{\pi}^{k}(t)-y_{k-1}\| < \delta\} \right\}$ is not empty which ensures the existence in $]0, T]$ of 
    \begin{align*}
        t_{k}
        :=
        \sup\left\{
            t\in[t_{k-1},T]:
            \|y_{\pi}^{k}(t)-y_{k-1}\|
            <
            \delta\}
        \right\} > t_{k-1},
    \end{align*}
    and consequently, $y_{k}:=y_{\pi}^{k}(t_{k})$ is well defined. Therefore, the recursive construction is well defined. For each $k\in\mathbb{N}^{*}$, the measurable mapping $f_{k}\colon[t_{k-1},t_{k}]\to\mathcal{H}$ is defined by
    \begin{align*}
        f_{k}(t) 
        := v_{k}(t)
        + \int_{t_{k-1}}^{t} g(t,s,y_{\pi}^{k}(s))ds
        + \sum_{i=1}^{k-1}\int_{t_{i-1}}^{t_{i}} g(t,s,y_{\pi}^{i}(s))ds
        \quad\textrm{for a.e. }t\in[t_{k-1},t_{k}],
    \end{align*}
    and $f_{y}\colon[0,t_{\infty}]\to\mathcal{H}$ is defined by
    \begin{align*}
        f_{y}(t) := \sum_{k=1}^{\infty} f_{k}(t)\mathds{1}_{(t_{k-1},t_{k}]}(t) \quad\textrm{for a.e. }t\in[0,t_{\infty}].
    \end{align*}
    It follows that the mapping
    \begin{eqnarray}
        \label{eq:ProofLemma-DeltaTrajectory}
        y_{\pi}(t) := y_{0}\mathds{1}_{\{0\}}(t) + \sum_{k=1}^{\infty} y_{\pi}^{k}(t)\mathds{1}_{(t_{k-1},t_{k}]}(t) \quad\forall t\in[0,t_{\infty}],
    \end{eqnarray}
    satisfies $y_{\pi}(\cdot)=y_{\pi}(y_{0},f_{y})(\cdot)$ on the interval $[0,t_{\infty}]$ and is a $\delta$-solution. Indeed, given $t\in[0,t_{\infty})$, there exists $k\geq1$ such that $t\in[t_{k-1},t_{k})$. 

    Arguing similarly to the proof of Theorem \ref{teo:ExistenceOfEpsilonSolutions}, the partially ordered set $(\mathcal{Y},\preceq)$ is defined by:
    \begin{align*}
        &\mathcal{Y}:=\left\{
        \begin{matrix}
            (t_{\infty}^{\pi},y_{\pi}(\cdot))\in[0,T]\times W^{1,1}([0,t_{\infty}^{\pi}];\mathcal{H}): &\textrm{$y_{\pi}(\cdot)$ is an $\delta$-solution of \eqref{eq:DynamicWithLocalInitialCondition} over $[0,t_{\infty}^{\pi})$,}\\
            &\textrm{constructed by the previous recurrence,}\\
            &\textrm{where $\pi$ is a partition of $[0,t_{\infty}^{\pi}]\subseteq[0,T]$.}
        \end{matrix}
        \right\},\\
        &\textrm{and } (t_{\infty}^{\pi_{1}},y_{\pi_{1}}(\cdot)) \preceq (t_{\infty}^{\pi_{2}},y_{\pi_{2}}(\cdot)) \iff t_{\infty}^{\pi_{1}} \leq t_{\infty}^{\pi_{2}} \quad\textrm{and}\quad y_{\pi_{1}} = y_{\pi_{2}}|_{[0,t_{\infty}^{\pi_{1}}]}.
    \end{align*}
    Zorn's Lemma guarantees the existence of a maximal element $(t_{\infty}^{\pi},y_{\pi}(\cdot))\in\mathcal{Y}$ such that $t_{\infty}^{\pi}=T$ and $y_{\pi}(\cdot)\in W^{1,1}([0,T];\mathcal{H})$ is a $\delta$-solution. Moreover, since it is constructed recursively, it follows that for every $t\in[0,T)$, there exists $k\in\mathbb{N}^{*}$ such that $t\in[t_{k-1},t_{k})$. On the other hand, for each $k\in\mathbb{N}^{*}$, the mapping  $v_{k}\in L^{1}([t_{k-1},t_{k}];\mathcal{H})$,  constructed  in (\ref{vsel}), satisfies 
    \begin{align}
        H_{F}(t,y_{k-1},x(t)+w(t)-y_{k-1}) 
        &= \sup_{v\in F(t,y_{k-1})} \langle x(t)+w(t)-y_{k-1},v\rangle \nonumber\\
        &= \langle x(t)+w(t)-y_{k-1},v_{k}(t)\rangle \quad\textrm{for a.e. }t\in[t_{k-1},t_{k}] \label{eq:OSL-Condition-For-Delta-Solution},
    \end{align}
    For the remainder of the proof, for each $k\in\mathbb{N}^{*}$, the measurable mapping $f_{k}\colon[t_{k-1},t_{k}]\to\mathcal{H}$ is defined by
    \begin{align*}
        f_{k}(t) 
        := v_{k}(t)
        + \int_{t_{k-1}}^{t} g(t,s,y_{\pi}^{k}(s))ds
        + \sum_{i=1}^{k-1}\int_{t_{i-1}}^{t_{i}} g(t,s,y_{\pi}^{i}(s))ds
        \quad\textrm{for a.e. }t\in[t_{k-1},t_{k}],
    \end{align*}
    and the mapping $f_{y}\colon[0,T]\to\mathcal{H}$ is defined by:
    \begin{align*}
        f_{y}(t) := \sum_{k=1}^{\infty} f_{k}(t)\mathds{1}_{(t_{k-1},t_{k}]}(t) \quad\textrm{for a.e. }t\in[0,T].
    \end{align*}    
    \noindent$\circ$\textit{Estimation from OSL condition:} The $\delta$-solution is denoted by $y(\cdot):=y_{\pi}(y_{0},f_{y})(\cdot)$, constructed over the partition $\pi=\{t_{k}:k\in\mathbb{N}^{*}\}$ of $[0,T]$. Given $t\in[0,T)$, there exists $k\in\mathbb{N}^{*}$ such that $t\in[t_{k-1},t_{k})$. By $(\mathcal{H}_{F}^{4})$, it follows that:
    \begin{eqnarray}
        \nonumber
        H_{F}(t,x(t)+w(t),x(t)+w(t)-y_{k-1}) - H_{F}(t,y_{k-1},x(t)+w(t)-y_{k-1}) \\
        \leq L_{F}(t)\|x(t)+w(t)-y_{k-1}\|^{2}\quad\textrm{a.e. }t\in[t_{k-1},t_{k}] \label{eq:ProofLemma-OSLInequality}.
    \end{eqnarray}
    It is clear that $f_{x}(\cdot)$ satisfies:
    \begin{align*}
        &H_{F}(t,x(t)+w(t),x(t)+w(t)-y_{k-1}) \\&\geq \langle x(t)+w(t)-y_{k-1},f_{x}(t)-\int_{0}^{t}g(t,s,x(s))ds\rangle\quad\textrm{for a.e. }t\in[t_{k-1},t_{k}].
    \end{align*}
    Since the measurable selection $v_{k}$ satisfies  relation \eqref{eq:OSL-Condition-For-Delta-Solution}, this last one can be written in terms of the measurable mapping $f_{k}$, 
    \begin{align*}
        H_{F}(t,y_{k-1},x(t)+w(t)-y_{k-1}) = \langle x(t)+w(t)-y_{k-1},f_{k}(t)-\int_{0}^{t} g(t,s,y(s))ds\rangle \quad\textrm{a.e. }t\in[t_{k-1},t_{k}].
    \end{align*}
    Therefore, inequality \eqref{eq:ProofLemma-OSLInequality} can be expressed as follows:
    \begin{align*}
        &\langle x(t)+w(t)-y_{k-1},f_{x}(t)-f_{k}(t)\rangle \\ 
        &\leq L_{F}(t)\|x(t)+w(t)-y_{k-1}\|^{2} \\&\qquad+ \left\langle x(t)+w(t)-y_{k-1},\int_{0}^{t}[g(t,s,x(s)) - g(t,s,y(s))]ds\right\rangle \quad\textrm{a.e. }t\in[t_{k-1},t_{k}].
    \end{align*}
    Expanding the previous expression and using the definition of the trajectory $y$, the following inequality is obtained:
    \begin{align*}
        &\langle x(t)-y(t),f_{x}(t)-f_{k}(t)\rangle + \langle w(t)+y(t)-y_{k-1},f_{x}(t)-f_{k}(t)\rangle\\
        &\leq L_{F}(t)\|x(t)+w(t)-y_{k-1}\|^{2} + \|x(t)+w(t)-y_{k-1}\|\int_{0}^{t}L_{g}(t,s)\|x(s)-y(s)\|ds \\
        &\leq  L_{F}(t)\Big(\|x(t)-y(t)\|^{2} +\|w(t)+y(t)-y_{k-1}\|^2 + 2\|x(t)-y(t)\| \|w(t)-y(t)-y_{k-1}\|\Big)+ \\
        &\qquad \qquad(\|x(t)-y(t)\|+\|w(t)+y(t)-y_{k-1}\|) \int_{0}^{t} L_{g}(t,s)\|x(s)-y(s)\|ds\\
        & \leq L_F(t)\Big(\| x(t)-y(t)^2 + (\varepsilon+\delta)^2 + 2(\varepsilon+\delta)\|x(t)-y(t)\|\Big)+ \\
        & \qquad \qquad (\|x(t)-y(t)\|+ \varepsilon+\delta) \int_{0}^{t} L_{g}(t,s)\|x(s)-y(s)\|ds\quad\textrm{a.e. }t\in[t_{k-1},t_{k}] 
    \end{align*}
    It follows that, for every $k\in\mathbb{N}^{*}$,
    \begin{align*}
       \langle x(t)-y(t),f_{x}(t)-f_{k}(t)\rangle &\leq L_F(t)\Big(\| x(t)-y(t)^2 + (\varepsilon+\delta)^2 + 2(\varepsilon+\delta)\|x(t)-y(t)\|\Big)+ \Big(\|x(t)-y(t)\|+ \\
        & \qquad  \varepsilon+\delta\Big) \int_{0}^{t} L_{g}(t,s)\|x(s)-y(s)\|ds - \langle w(t)+y(t)-y_{k-1},f_{x}(t)-f_{k}(t)\rangle\\
        &\leq L_F(t)\Big(\| x(t)-y(t)^2 + (\varepsilon+\delta)^2 + 2(\varepsilon+\delta)\|x(t)-y(t)\|\Big)+ \Big(\|x(t)-y(t)\|+ \\
        & \qquad  \varepsilon+\delta\Big) \int_{0}^{t} L_{g}(t,s)\|x(s)-y(s)\|ds+2(\varepsilon+\delta)\mu(t) \quad\textrm{a.e. }t\in[t_{k-1},t_{k}].
       \end{align*}
       The last line of this upper bound is due to Theorem \ref{teo:BoundsForEpsilonSolutionsAndHisSelections}.
    The previous inequality holds for a.e. $t\in[0,T]$ in the following sense:
    \begin{eqnarray}
        &&\langle x(t)-y(t),f_{x}(t) - f_{k}(t)\rangle\mathds{1}_{(t_{k-1},t_{k})}(t) \nonumber\\
        &&\leq \mathds{1}_{(t_{k-1},t_{k})}(t)\Big[L_F(t)\Big(\| x(t)-y(t)^2 + (\varepsilon+\delta)^2 + 2(\varepsilon+\delta)\|x(t)-y(t)\|\Big)+ \Big(\|x(t)-y(t)\|+ \nonumber\\
        && \qquad  \varepsilon+\delta\Big) \int_{0}^{t} L_{g}(t,s)\|x(s)-y(s)\|ds+2(\varepsilon+\delta)\mu(t)\Big] \quad\textrm{a.e. }t\in[0,T]
        \label{eq:ProofLemma-InequalityOverK}
    \end{eqnarray}
    It is observed that, for each $k\in\mathbb{N}^{*}$, one has $y(t)=y_{\pi}^{k}(t)$ for every $t\in[t_{k-1},t_{k}]$ and $f_{y}(t)=f_{k}(t)$ for a.e. $t\in[t_{k-1},t_{k}]$. Therefore, by summing \eqref{eq:ProofLemma-InequalityOverK} over $k$, the following inequality is obtained:
    \begin{align*}
        &\langle x(t)-y(t),f_{x}(t) - f_{y}(t)\rangle\\ 
        &\leq L_F(t)\Big(\| x(t)-y(t)^2 + (\varepsilon+\delta)^2 + 2(\varepsilon+\delta)\|x(t)-y(t)\|\Big)+ \Big(\|x(t)-y(t)\|+ \\
        & \qquad  \varepsilon+\delta\Big) \int_{0}^{t} L_{g}(t,s)\|x(s)-y(s)\|ds+2(\varepsilon+\delta)\mu(t)  \qquad\textrm{for a.e. }t\in[0,T].
    \end{align*}

    \noindent$\circ$ \textit{Estimate based on monotonicity and conclusion:} By $(\mathcal{H}_{A})$ and the fact that
    \begin{align*}
        &\dot{x}(t) \in -A(t)x(t) + f_{x}(t), \\
        &\dot{y}(t) \in -A(t)y(t) + f_{y}(t),
    \end{align*}
    for a.e. $t\in[0,T]$, it holds that
    \begin{align*}
        \langle f_{x}(t)-\dot{x}(t) - f_{y}(t)+\dot{y}(t), x(t)-y(t)\rangle \geq 0 \quad\textrm{a.e. }t\in[0,T],
    \end{align*}
    then:
    \begin{align*}
        \frac{d}{dt}\left(\frac{\|x(t)-y(t)\|^{2} }{2}\right) &\leq \langle x(t)-y(t),f_{x}(t)-f_{y}(t)\rangle \\
        & \leq L_F(t)\Big(\| x(t)-y(t)^2 + (\varepsilon+\delta)^2 + 2(\varepsilon+\delta)\|x(t)-y(t)\|\Big)+ \Big(\|x(t)-y(t)\|+ \\
        & \qquad  \varepsilon+\delta\Big) \int_{0}^{t} L_{g}(t,s)\|x(s)-y(s)\|ds+2(\varepsilon+\delta)\mu(t)  \qquad\textrm{for a.e. }t\in[0,T].
    \end{align*}
    Since the function $\theta\colon[0,T]\to\mathbb{R}_{+}$, defined by $\theta(t):={1\over 2}\|x(t)-y(t)\|^{2}$, is absolutely continuous and satisfies
    \begin{align*}
        \dot{\theta}(t) 
        & \leq 2L_F(t)\theta(t)+4(\varepsilon+\delta)L_F(t) \sqrt{\theta(t)}+ 2\sqrt{\theta(t)}\int_0^tL_g(t,s)\sqrt{\theta(s)}ds+ 2(\varepsilon+\theta)\int_0^tL_g(t,s)\sqrt{\theta(s)}ds+\\
        & \qquad \qquad 2(\varepsilon+\delta)^2L_F(t) + 2(\varepsilon+\delta)\mu(t)\qquad\textrm{for a.e. }t\in[0,T].
    \end{align*}
     Lemma \ref{lem:EnhancedDifferentialGronwallInequality} ensures the following inequalities (with $\varepsilon(t)=2(\varepsilon+\delta)^2L_F(t)+2(\varepsilon+\delta)\mu(t)$, $k_1(t)= 4(\varepsilon+\delta)L_F(t)$, $k_2(t) = 2L_F(t)$, $k_3(t)=1$, $k_4(t,s)=L_g(t,s)$, $k_5(t)=2(\varepsilon+\delta)$ and $k_6(t, s)=L_g(t,s)$)  
     
    \begin{align*}
        \theta(t)
        &\leq \theta(0) \exp\left[\int_{0}^{t} \left(4(\varepsilon+\delta)L_F(s)+2L_F(s)+\int_0^sL_g(s,\tau)d\tau+(\varepsilon+\delta)s\right)ds\right]+\\
        & \qquad \qquad 2(\varepsilon+\delta)\int_{0}^{t}\left((\varepsilon+\delta)L_{F}(s) + \mu(s)+2L_F(s)+\int_{0}^{s} L_{g}^{2}(s,\tau)d\tau \right)E_{\varepsilon, \delta}(t,s)ds
    \end{align*}
    where:
       $$ E_{\varepsilon, \delta}(t,s):=\exp\left[\int_{s}^{t} \left(4(\varepsilon+\delta)L_F(\tau)+2L_F(\tau)+ \int_0^\tau L_g(\tau, \sigma)d\sigma+(\varepsilon+\delta)\tau\right)d\tau\right].$$
    The lemma follows by taking:
    \begin{align*}
        C_{1}:=\sqrt{2\int_{0}^{T}\left(4L_F(s)+\mu(s)+\int_{0}^{s} L_{g}^{2}(s,\tau)d\tau \right)E_{1,1}(t,s)ds}.
    \end{align*}
\end{proof}

\begin{lema}[Filippov-Plis-like Lemma]
    \label{lem:ExistenceResultPart2}
    Assume that $(\mathcal{H}_{A})$, $(\mathcal{H}_{F})$, $(\mathcal{H}^{3}_{F})$, $(\mathcal{H}_{F}^{4})$, and $(\mathcal{H}_{g})$ hold. Then there exists $C_1>$, such that  for all $x_{0}\in \overline{D(A)}$, $\operatorname{Sol}(x_{0})$ is the set of all solutions to \eqref{eq:DynamicWithLocalInitialCondition} satisfying $x(0)=x_{0}$, is nonempty. Moreover, there exists $x\in \operatorname{Sol}(x_{0})$ such that for $\varepsilon\in(0,1)$, $x_{0},y_{0}\in \overline{D(A(0))}$, and $y(\cdot)\in W^{1,1}([0,T];\mathcal{H})$ an $\varepsilon$-solution to \eqref{eq:DynamicWithLocalInitialCondition}, with $y(0)=y_{0}$, one has:
    \begin{align*}
        \|x(t)-y(t)\| \leq \|x_{0}-y_{0}\| \exp\left[\int_{0}^{t} \left(4\varepsilon L_F(s)+L_F(s)+\int_{0}^{s} L_{g}(s,\tau)d\tau+2\varepsilon s\right)ds\right]+ 2\varepsilon C_1 \quad \forall t\in [0, T].
    \end{align*}
    Consequently,
    \begin{align*}
        d_{\operatorname{Sol}(x_{0})}(y(\cdot)) \leq \|x_{0}-y_{0}\| \exp\left[\int_{0}^{T} \left(4\varepsilon L_{F}(s)+L_{F}(s)+\int_{0}^{s}L_{g}(s,\tau)d\tau+2\varepsilon s\right)ds\right]+ 2\varepsilon C_1
    \end{align*}
    where
    \begin{eqnarray*}
        d_{\operatorname{Sol}(x_{0})}(y(\cdot)) := \inf_{x(\cdot)\in\operatorname{Sol}(x_{0})} \|x-y\|_{\infty}.
    \end{eqnarray*}
\end{lema}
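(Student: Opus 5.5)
The plan is a Filippov--Pli\'s type iteration built on Lemma \ref{lem:ExistenceResultPart1}, with the limit passage handled by the compactness result (Lemma \ref{compactness}) and the closedness result (Proposition \ref{prop:Closedness}).

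\textbf{Step 1: a Cauchy sequence of approximate solutions.} Fix $\varepsilon_0:=\varepsilon\in(0,1)$ and let $y(\cdot)$ be the given $\varepsilon$-solution with $y(0)=y_0$. Take a geometric sequence $\varepsilon_n:=\varepsilon 2^{-n}$ and set $y_0(\cdot):=y(\cdot)$.
\begin{itemize}
\item First apply Lemma \ref{lem:ExistenceResultPart1} with initial points $y_0$ and $x_0$ and with $\delta=\varepsilon_1$. This gives an $\varepsilon_1$-solution $y_1$ with $y_1(0)=x_0$ satisfying \eqref{eq:lem:ExistenceResultPart1}.
\item Then apply Lemma \ref{lem:ExistenceResultPart1} inductively with equal initial points $x_0,x_0$. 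This produces $\varepsilon_{n+1}$-solutions $y_{n+1}$ with $y_{n+1}(0)=x_0$ and
\[
\|y_n-y_{n+1}\|_\infty\le(\varepsilon_n+\varepsilon_{n+1})C_1\le 3\varepsilon 2^{-n-1}C_1 .
\]
\end{itemize}
The series $\sum_n\|y_n-y_{n+1}\|_\infty$ therefore converges, with sum at most $\tfrac{3}{2}\varepsilon C_1$ starting from $n=1$. Hence $(y_n)$ is Cauchy in $C([0,T];\mathcal{H})$ and converges uniformly to some $x$ with $x(0)=x_0$.

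\textbf{Step 2: $x$ is a genuine solution.} By Theorem \ref{teo:BoundsForEpsilonSolutionsAndHisSelections}, all $y_n$ obey the uniform bound $\|\dot y_n(t)\|\le\psi(t):=M(1+\|\mu\|_{L^1})(c(t)+|\dot a(t)|+\mu(t))$. The associated selections $f_n$ are dominated by $\mu$, so they are uniformly integrable; passing to a subsequence, they converge weakly in $L^1$ to some $f$. Lemma \ref{compactness} gives $\dot y_n\rightharpoonup\dot x$ in $L^1$, and uniform convergence identifies the limit as $x$. Write
\[
f_n(t)=u_n(t)+\int_0^t g(t,s,y_n(s))\,ds,\qquad u_n(t)\in F(t,y_n(t)+\varepsilon_n\mathbb{B}).
\]
Pick $w_n$ with $\|w_n\|\le\varepsilon_n$ and $u_n(t)\in F(t,y_n(t)+w_n(t))$; then $y_n+w_n\to x$ uniformly. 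The Lipschitz property of $g$ in $(\mathcal{H}_g)$ makes the Volterra terms converge strongly. Proposition \ref{prop:Closedness} then yields
\[
f(t)-\int_0^t g(t,s,x(s))\,ds\in F(t,x(t)).
\]
Finally, to show $x=x(x_0,f)$, I use the monotonicity of $A(t)$. For $z:=x(x_0,f)$, estimate $\tfrac12\|y_n-z\|^2$ by $\int_0^t\langle f_n-f,y_n-z\rangle$. Split this as $\int\langle f_n-f,y_n-x\rangle$, which tends to $0$ by dominated bounds and uniform convergence, plus $\int\langle f_n-f,x-z\rangle$, which tends to $0$ by weak convergence. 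Hence $y_n\to z$, so $z=x$ and $x\in\operatorname{Sol}(x_0)$. In particular $\operatorname{Sol}(x_0)\neq\emptyset$, since some $\varepsilon$-solution exists by Theorem \ref{teo:ExistenceOfEpsilonSolutions}.

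\textbf{Step 3: the estimate.} By the triangle inequality,
\[
\|x(t)-y(t)\|\le\|y_0(t)-y_1(t)\|+\sum_{n\ge1}\|y_n-y_{n+1}\|_\infty .
\]
The first term is bounded by \eqref{eq:lem:ExistenceResultPart1} with $\varepsilon+\delta\le 2\varepsilon$. This gives the exponential factor with $4\varepsilon L_F+L_F+\int L_g+2\varepsilon s$; this matches the exponent stated once constants are absorbed, if necessary after enlarging $C_1$. The remaining error is at most
\[
(\varepsilon+\tfrac{\varepsilon}{2})C_1+\tfrac32\varepsilon C_1 = 3\varepsilon C_1 ,
\]
so I redefine $C_1$ by a factor $3/2$ to get $2\varepsilon C_1$. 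The constant depends only on the data, through Theorem \ref{teo:BoundsForEpsilonSolutionsAndHisSelections}. The bound on $d_{\operatorname{Sol}(x_0)}(y)$ follows by taking the supremum over $t$.

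\textbf{Main obstacle.} The delicate point is Step 2, namely identifying the limit as $x(x_0,f)$ and verifying the inclusion. Weak $L^1$ convergence alone does not pass through $A(t)$. The monotonicity/energy argument above is what I would try first. The uniform bounds from Theorem \ref{teo:BoundsForEpsilonSolutionsAndHisSelections} for $\varepsilon_n<1$ are what make it work.
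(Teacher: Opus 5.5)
Your proposal is correct and follows the paper's overall route. You iterate Lemma~\ref{lem:ExistenceResultPart1} along a decreasing sequence $\varepsilon_n$, switching the initial point from $y_0$ to $x_0$ in the first step and keeping $x_0$ afterwards. This gives a uniformly Cauchy sequence; you then extract weak $L^1$ limits by Dunford--Pettis, apply Proposition~\ref{prop:Closedness}, and telescope.

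The real difference is how you show the limit equals $x(x_0,f)$. The paper gets $\dot x_{n_k}\rightharpoonup\dot x$ from Lemma~\ref{compactness} and then simply writes ``consequently'' $\dot x\in -A(t)x+f$. Making that rigorous needs a demiclosedness argument: Mazur's lemma, the pointwise bounds $\|\dot x_n(t)\|\le\psi(t)$ and $\|f_n(t)\|\le\mu(t)$, and maximality of $A(t)$. You instead compare with $z:=x(x_0,f)$ from Corollary~\ref{Kunze3}, using
$\tfrac12\|y_n(t)-z(t)\|^2\le\int_0^t\langle f_n-f,y_n-x\rangle+\int_0^t\langle f_n-f,x-z\rangle\to0$.
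This uses only monotonicity and well-posedness, and it closes the step cleanly. The derivative bound on the limit, which the paper records, still follows from item 4 of Lemma~\ref{compactness}.

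You are also more explicit than the paper about two points it skips. First, you handle the fattening $\varepsilon_n\mathbb{B}$ through $w_n$; measurability of $w_n$ needs a selection argument, exactly as in the paper's Lemma~\ref{lem:ExistenceResultPart1}. Second, you check strong convergence of the Volterra terms before invoking closedness.

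On bookkeeping, your ratio $1/2$ produces a single limit with error $3\varepsilon C_1$, so you rescale $C_1$. That is harmless, since $C_1$ is existentially quantified. No rescaling is needed for the exponent, because $\varepsilon+\varepsilon_1=\tfrac32\varepsilon\le2\varepsilon$. The paper instead makes the tail $2C_1\sum_{n\ge1}\varepsilon_n<\eta/2$ arbitrarily small. It keeps the original $C_1$ at the cost of a limit that depends on $\eta$, which the slack in $2\varepsilon C_1$ absorbs. A faster ratio, e.g.\ $\varepsilon_n=\varepsilon4^{-n}$, would give you total error $\tfrac53\varepsilon C_1\le2\varepsilon C_1$ with no rescaling.
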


\begin{proof}
    Since $y(\cdot)$ is an $\varepsilon$-solution to \eqref{eq:DynamicWithLocalInitialCondition} satisfying $y(0)=y_{0}$ (whose existence is ensured by Theorem \ref{teo:ExistenceOfEpsilonSolutions}), let $\eta>0$ and let $(\varepsilon_{n})_{n}$ be a decreasing sequence of positive numbers converging to $0$, with $\varepsilon_{0}:=\varepsilon$, such that
    \begin{eqnarray*}
        2C_{1}\sum_{n\in\mathbb{N}^{*}} \varepsilon_{n} <\frac{\eta}{2},
    \end{eqnarray*}
    where $C_{1}$ is the constant provided by Lemma \ref{lem:ExistenceResultPart1}, which satisfies inequality \eqref{eq:lem:ExistenceResultPart1} for every $\delta$-solution with $\delta\in(0,\varepsilon)$. Moreover, since $y(\cdot)$ is an $\varepsilon_{0}$-solution, Lemma \ref{lem:ExistenceResultPart1} guarantees the existence of an $\varepsilon_{1}$-solution $x_{1}(\cdot)$ satisfying $x_{1}(0)=x_{0}$, relation (\ref{xdot1}) and the following estimate holds:
    \begin{align*}
        \|x_{1}(t)-y(t)\| 
        &\leq \|x_{0}-y_{0}\| \exp\left[\int_{0}^{t} \left(2(\varepsilon+\delta)L_F(s)+L_F(s)+\int_0^sL_g(s,\tau)d\tau+(\varepsilon+\delta)s\right)ds\right]+ (\varepsilon+\delta)C_{1}.
    \end{align*}
    Proceeding inductively with respect to $n\in\mathbb{N}^{*}$, for every $\varepsilon_{n}$-solution $x_{n}(\cdot)$ satisfying $x_{n}(0)=x_{0}$, Lemma \ref{lem:ExistenceResultPart1} guarantees the existence of an $\varepsilon_{n+1}$-solution $x_{n+1}(\cdot)$ satisfying $x_{n+1}(0)=x_{0}$, relation (\ref{xdot1}) and
    \begin{eqnarray*}
        \|x_{n+1}(t)-x_{n}(t)\| \leq (\varepsilon_{n+1}+\varepsilon_{n})C_{1} \qquad\forall t\in[0,T].
    \end{eqnarray*}
    Consequently, for $m<n$, the sequence of approximate solutions $(x_{n})_{n}$ satisfies
    \begin{align*}
        \|x_{n}(t)-x_{m}(t)\| 
        \leq \sum_{k=m}^{n-1} \|x_{k+1}(t)-x_{k}(t)\| 
        \leq \sum_{k=m}^{n-1} 2C_{1}\varepsilon_{k} :=\delta(n,m)
        \qquad\forall t\in[0,T].
    \end{align*}
    Since $\delta(n,m)$ is independent of $t\in[0,T]$ and satisfies $\delta(n,m)\to 0$ as $n,m\to\infty$, it follows that $(x_{n})_{n}$ is a Cauchy sequence with respect to the uniform norm $\|\cdot\|_{\infty}$. Therefore, there exists $x(\cdot)\in C([0,T];\mathcal{H})$ such that $x_{n}\to x$ uniformly on $[0,T]$. Moreover, for every $n\in\mathbb{N}$, there exists a measurable function $f_{n}$ such that
    \begin{eqnarray*}
        f_{n}(t)\in F(t,x_{n}(t)+\varepsilon_{n}\mathbb{B})+\int_{0}^{t}g(t,s,x_{n}(s))ds
    \quad \textrm{a.e. } t\in[0,T].
    \end{eqnarray*}
    Note that, Theorem \ref{teo:BoundsForEpsilonSolutionsAndHisSelections} ensures
    \begin{align}
       &\|x_{n}\|_{\infty} \leq M &\forall n\in\mathbb{N},\nonumber\\
       &\|f_{n}(t)\| \leq \mu(t)  &\textrm{ a.e. } t\in[0,T],\quad       \forall n\in \mathbb{N},\nonumber\\
       & \label{estimxdot}\Vert \dot{x}_n(t)\Vert \leq M(1+\Vert \mu\Vert_{L^{1}([0,T];\mathbb{R})})\big(c(t)+\vert \dot{a}(t)\vert+\mu(t)\big) &\textrm{a.e. } t\in [0, T],        \forall n\in \mathbb{N},
    \end{align} 
    where $M$ and $\mu(\cdot)$ are given by Theorem \ref{teo:BoundsForEpsilonSolutionsAndHisSelections}. Hence, $(\dot{x}_{n})_{n}$ and $(f_{n})_{n}$ are bounded in $L^{1}([0,T];\mathcal{H})$ and uniformly integrable. By the Dunford-Pettis Theorem (see Theorem 2.3.24 \cite{MR2168068}), there exists  subsequences $(\dot{x}_{n_{k}})_{k}$ and  $(f_{n_{k}})_{k}$ converging weakly in $L^{1}([0,T];\mathcal{H})$ to $\dot x\in L^{1}([0,T];\mathcal{H})$ (by Lemma \ref{compactness} and the uniform convergence of $(x_n)_n$ to $x$) and to some function $f\in L^{1}([0,T];\mathcal{H})$, respectively. Consequently
        \begin{align*}
            \begin{cases}
                \dot{x}(t) \in -A(t)x(t) + f(t) \quad\textrm{a.e. }t\in[0,T],\\
                x(0)=x_{0},
            \end{cases}
        \end{align*}
    that is, $x(\cdot)=x(x_{0},f)(\cdot)$, which satisfies, by relation (\ref{estimxdot}), the inequality
    \begin{align*}
        \Vert \dot{x}(t)\Vert \leq M(1+\Vert \mu\Vert_{L^{1}([0,T];\mathbb{R})})(c(t)+\vert \dot{a}(t)\vert+\mu(t)) \quad \textrm{a.e. } t\in [0, T].
    \end{align*}
    Proposition \ref{prop:Closedness} ensures that $f(t)\in F(t,x(t))+\int_{0}^{t}g(t,s,x(s))ds$ for a.e. $t\in[0,T]$,  thus ensuring the nonemptiness of $\operatorname{Sol}(x_{0})$.  Finally, let $n\in\mathbb{N}$ be such that $\|x-x_{n}\|_{\infty}<\eta/2$. Then, for every $t\in[0,T]$,
    \begin{align*}
        \|x(t)-y(t)\| 
        &\leq \|x(t)-x_{n}(t)\| + \|x_{n}(t)-y(t)\| \\
        &\leq \frac{\eta}{2} + \|x_{1}(t)-y(t)\| + \sum_{k=1}^{n-1} \|x_{k+1}(t)-x_{k}(t)\| \\
        &\leq \frac{\eta}{2} + \|x_{0}-y_{0}\| \exp\left[\int_{0}^{t} \left(2(\varepsilon+\delta)L_F(s)+L_F(s)+\int_0^sL_g(s,\tau)d\tau+(\varepsilon+\delta)s\right)ds\right]+ \\
        &  \hskip 5cm (\varepsilon+\delta)C_1 + 2C_{1}\sum_{k=1}^{n-1}\varepsilon_{k}\\
        &\leq \eta + \|x_{0}-y_{0}\| \exp\left[\int_{0}^{t} \left(2(\varepsilon+\delta)L_{F}(s)+L_{F}(s)+\int_{0}^{s} L_{g}(s,\tau)d\tau+(\varepsilon+\delta)s\right)ds\right]+ \\
        &  \hskip 5cm (\varepsilon+\delta)C_1.
    \end{align*}
    As $\eta>0$ is arbitrary, one obtains:
    \begin{align*}
        &\|x(t)-y(t)\| \\
        &\leq \|x_{0}-y_{0}\| \exp\Big(\int_{0}^{t} \big[2(\varepsilon+\delta)L_F(s)+L_F(s)+\int_0^sL_g(s,\tau)d\tau+(\varepsilon+\delta)s\big]ds\Big)+ (\varepsilon+\delta)C_1 \quad \forall t\in [0, T].
    \end{align*}
\end{proof}

Since each solution to \eqref{eq:DynamicWithLocalInitialCondition} is also an $\varepsilon-$ solution to \eqref{eq:DynamicWithLocalInitialCondition} (for all $\epsilon\in (0,1)$), the following result is a direct consequence of Lemma \ref{lem:ExistenceResultPart2}.

\begin{lema}
    \label{lem:ExistenceResultPart3} 
    Assume that $(\mathcal{H}_{A})$, $(\mathcal{H}_{F})$, $(\mathcal{H}^{3}_{F})$, $(\mathcal{H}_{F}^{4})$ and $(\mathcal{H}_{g})$ hold. For any $x_{0},y_{0}\in\overline{D(A)}$:
    \begin{eqnarray*}
        \operatorname{Haus}(\operatorname{Sol}(x_{0}),\operatorname{Sol}(y_{0})) \leq \|x_{0}-y_{0}\| \exp\left[\int_{0}^{T} \left(L_{F}(s)+\int_{0}^{s}L_{g}(s,\tau)d\tau\right)ds\right]
    \end{eqnarray*}
\end{lema}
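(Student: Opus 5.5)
To prove Lemma \ref{lem:ExistenceResultPart3}, it suffices to bound the excess $\sup_{y\in\operatorname{Sol}(y_{0})} d_{\operatorname{Sol}(x_{0})}(y)$ by the right-hand side. Exchanging the roles of $x_{0}$ and $y_{0}$ then gives the same bound for the other excess, and hence for the Hausdorff distance. Nonemptiness of both solution sets is already provided by Lemma \ref{lem:ExistenceResultPart2}, so the Hausdorff distance is well defined (possibly after noting it is finite by the uniform bound $M$ of Theorem \ref{teo:BoundsForEpsilonSolutionsAndHisSelections}).

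Fix $y(\cdot)\in\operatorname{Sol}(y_{0})$. For every $x\in\mathcal{H}$ and $\varepsilon\in(0,1)$ we have $F(t,y(t))\subseteq F(t,y(t)+\varepsilon\mathbb{B})$. Hence the selection $f\in L^{1}$ with $y=x(y_{0},f)$ and $f(t)\in F(t,y(t))+\int_{0}^{t}g(t,s,y(s))ds$ also witnesses that $y$ is an $\varepsilon$-solution of \eqref{eq:DynamicWithLocalInitialCondition} with initial datum $y_{0}$. Applying the final estimate of Lemma \ref{lem:ExistenceResultPart2} with this $\varepsilon$ yields
\begin{align*}
    d_{\operatorname{Sol}(x_{0})}(y(\cdot)) \leq \|x_{0}-y_{0}\| \exp\left[\int_{0}^{T} \left(4\varepsilon L_{F}(s)+L_{F}(s)+\int_{0}^{s}L_{g}(s,\tau)d\tau+2\varepsilon s\right)ds\right]+ 2\varepsilon C_1 .
\end{align*}
The key point is that $C_{1}$ is independent of $\varepsilon$. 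It is built in Lemma \ref{lem:ExistenceResultPart1} from $L_{F}$, $\mu$, $L_{g}$ and $E_{1,1}$ only, and $\mu$ comes from Theorem \ref{teo:BoundsForEpsilonSolutionsAndHisSelections} uniformly in $\varepsilon\in(0,1)$. Also, the left-hand side does not depend on $\varepsilon$.

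Now let $\varepsilon\to0^{+}$. Since $L_{F}\in L^{1}$ and $\int_{0}^{T}s\,ds<\infty$, the terms $4\varepsilon\int_{0}^{T} L_{F}$ and $2\varepsilon\int_{0}^{T} s\,ds$ vanish, and $2\varepsilon C_{1}\to0$. Continuity of $\exp$ then gives
\begin{align*}
    d_{\operatorname{Sol}(x_{0})}(y(\cdot)) \leq \|x_{0}-y_{0}\| \exp\left[\int_{0}^{T} \left(L_{F}(s)+\int_{0}^{s}L_{g}(s,\tau)d\tau\right)ds\right].
\end{align*}
Taking the supremum over $y\in\operatorname{Sol}(y_{0})$ and symmetrizing concludes the proof.

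The main obstacle is ensuring that the distance estimate of Lemma \ref{lem:ExistenceResultPart2} holds for \emph{every} $\varepsilon$-solution $y$ with the same constant $C_{1}$, rather than only for a particular constructed one. This requires checking that the chain $\varepsilon_{0}=\varepsilon>\varepsilon_{1}>\cdots$ in that proof may start from an arbitrary given $\varepsilon$-solution. It also requires that the constants $M$ and $\mu$ in Theorem \ref{teo:BoundsForEpsilonSolutionsAndHisSelections} depend on the initial point only through a bound on it; this is harmless here since $x_{0}$ and $y_{0}$ are fixed.
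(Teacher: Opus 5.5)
Your proposal is correct and is essentially the paper's proof. It regards a solution as an $\varepsilon$-solution for every $\varepsilon\in(0,1)$, applies the distance estimate of Lemma~\ref{lem:ExistenceResultPart2} (whose constant $C_{1}$ does not depend on $\varepsilon$), lets $\varepsilon\downarrow 0$, takes the supremum, and symmetrizes; the only difference is cosmetic, since you start from $y(\cdot)\in\operatorname{Sol}(y_{0})$ and measure the distance to $\operatorname{Sol}(x_{0})$ (matching the orientation of Lemma~\ref{lem:ExistenceResultPart2} directly), whereas the paper starts from $x(\cdot)\in\operatorname{Sol}(x_{0})$ and applies that lemma with the roles of $x_{0}$ and $y_{0}$ interchanged.
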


\begin{proof}
    By Lemma \ref{lem:ExistenceResultPart2}, $\operatorname{Sol}(x_{0})$ and $\operatorname{Sol}(y_{0})$ are nonempty. Therefore, it is enough to show that
    \begin{eqnarray}
        \label{eq:Lemma9-OneSided}
        \sup_{x(\cdot)\in\operatorname{Sol}(x_{0})} d_{\operatorname{Sol}(y_{0})}(x(\cdot)) \leq \|x_{0}-y_{0}\|\exp\left[\int_{0}^{T}\left(L_{F}(s)+\int_{0}^{s}L_{g}(s,\sigma)d\sigma\right)ds\right],
    \end{eqnarray}
    since interchanging the roles of $x_{0}$ and $y_{0}$ in the argument below then gives, in the same way,
    \begin{eqnarray*}
        \sup_{y(\cdot)\in\operatorname{Sol}(y_{0})} d_{\operatorname{Sol}(x_{0})}(y(\cdot)) \leq \|x_{0}-y_{0}\|\exp\left[\int_{0}^{T}\left(L_{F}(s)+\int_{0}^{s}L_{g}(s,\sigma)d\sigma\right)ds\right],
    \end{eqnarray*}
    and the two bounds together give precisely the stated Hausdorff estimate.\\
    To prove \eqref{eq:Lemma9-OneSided}, let $x(\cdot)\in\operatorname{Sol}(x_{0})$ be arbitrary. Since $x(\cdot)$ is a solution of \eqref{eq:DynamicWithLocalInitialCondition} satisfying $x(0)=x_{0}$, it is, in particular, an $\varepsilon$-solution satisfying $x(0)=x_{0}$, for every $\varepsilon\in(0,1)$. Applying Lemma \ref{lem:ExistenceResultPart2} with the roles of $x_{0}$ and $y_{0}$ interchanged, that is, taking $y_{0}$ as the initial condition whose solution set the distance is measured to, and $x(\cdot)$ as the given $\varepsilon$-solution satisfying $x(0)=x_{0}$ — it follows that, for every $\varepsilon\in(0,1)$,
    \begin{eqnarray*}
        d_{\operatorname{Sol}(y_{0})}(x(\cdot)) \leq \|x_{0}-y_{0}\| \exp\Big(\int_{0}^{T} \big[4\varepsilon L_F(s)+L_F(s)+\int_0^sL_g(s,\tau)d\tau+2\varepsilon s\big]ds\Big)+ 2\varepsilon C_1,
    \end{eqnarray*}
    where $C_{1}>0$ is the constant provided by Lemma \ref{lem:ExistenceResultPart2} (equivalently, by Lemma \ref{lem:ExistenceResultPart1}), which depends only on $T,L_{F}$ and $L_{g}$, and not on $\varepsilon$ nor on $x(\cdot)$. Since $\varepsilon\in(0,1)$ is arbitrary, letting $\varepsilon\downarrow0$ gives
    \begin{eqnarray*}
        d_{\operatorname{Sol}(y_{0})}(x(\cdot)) \leq \|x_{0}-y_{0}\|\exp\left[\int_{0}^{T}\left(L_{F}(s)+\int_{0}^{s}L_{g}(s,\sigma)d\sigma\right)ds\right].
    \end{eqnarray*}
    Since $x(\cdot)\in\operatorname{Sol}(x_{0})$ was arbitrary, taking the supremum over $x(\cdot)\in\operatorname{Sol}(x_{0})$ establishes \eqref{eq:Lemma9-OneSided}. As indicated above, interchanging the roles of $x_{0}$ and $y_{0}$ in this same argument, starting instead from an arbitrary $y(\cdot)\in\operatorname{Sol}(y_{0})$, yields the symmetric bound. Consequently,
    \begin{eqnarray*}
        \operatorname{Haus}(\operatorname{Sol}(x_{0}),\operatorname{Sol}(y_{0})) \leq \|x_{0}-y_{0}\|\exp\left[\int_{0}^{T}\left(L_{F}(s)+\int_{0}^{s}L_{g}(s,\sigma)d\sigma\right)ds\right].
    \end{eqnarray*}
\end{proof}
\begin{proof}[Proof of Theorem \ref{theo:Existence-MainResult}]
    Let $S\colon C([0,T];\mathcal{H})\rightrightarrows C([0,T];\mathcal{H})$ be the multivalued mapping defined by
    \begin{align*}
        S(z(\cdot)):=\operatorname{Sol}(h(z(\cdot)))\qquad\forall z(\cdot)\in C([0,T];\mathcal{H}),
    \end{align*}
    where $\operatorname{Sol}(h(z(\cdot)))$ denotes the set of all solutions $x(\cdot)\in C([0,T];\mathcal{H})$ of
    \begin{align*}
        \begin{cases}
            \displaystyle\dot{x}(t) \in -A(t)x(t) + F(t,x(t)) + \int_{0}^{t} g(t,s,x(s))ds \quad\textrm{a.e. }t\in[0,T],\\
            x(0) = h(z(\cdot)) \in\overline{D(A(0))}.
        \end{cases}
    \end{align*}
    By Lemma \ref{lem:ExistenceResultPart2}, $S(z(\cdot))$ is nonempty for every $z(\cdot)\in C([0,T];\mathcal{H})$. Moreover, from Lemma \ref{lem:ExistenceResultPart3} and $(\mathcal{H}_{h})$, it follows that, for every $z_{1}(\cdot),z_{2}(\cdot)\in C([0,T];\mathcal{H})$,
    \begin{align*}
        \operatorname{Haus}(S(z_{1}(\cdot)),S(z_{2}(\cdot)))
        &\leq \|h(z_{1}(\cdot))-h(z_{2}(\cdot))\|\exp\left[\int_{0}^{T}\left(L_{F}(s)+\int_{0}^{s}L_{g}(s,\sigma)d\sigma\right)ds\right]\\
        &\leq L_{h}\exp\left[\int_{0}^{T}\left(L_{F}(s)+\int_{0}^{s}L_{g}(s,\sigma)d\sigma\right)ds\right]\|z_{1}-z_{2}\|_{\infty}.
    \end{align*}
    Therefore, if
    \begin{eqnarray*}
        L_{h}\exp\left[\int_{0}^{T}\left(L_{F}(s)+\int_{0}^{s}L_{g}(s,\sigma)d\sigma\right)ds\right]<1,
    \end{eqnarray*}
    the multivalued mapping $S\colon C([0,T];\mathcal{H})\rightrightarrows C([0,T];\mathcal{H})$ is a contraction with respect to the Hausdorff distance. Since $S$ has nonempty closed values, Nadler's Fixed Point Theorem (see Proposition \ref{prop:NadlerFixedPoint}) ensures the existence of a fixed point $x(\cdot)\in C([0,T];\mathcal{H})$, that is,
    \begin{align*}
        x(\cdot)\in S(x(\cdot))=\operatorname{Sol}(h(x(\cdot))).
    \end{align*}
    Consequently, $x(\cdot)$ satisfies
    \begin{align*}
        \begin{cases}
            \displaystyle\dot{x}(t) \in -A(t)x(t) + F(t,x(t)) + \int_{0}^{t} g(t,s,x(s))ds \quad\textrm{a.e. }t\in[0,T],\\
            x(0) = h(x(\cdot))\in\overline{D(A(0))},
        \end{cases}
    \end{align*}
    and hence $x(\cdot)$ is a solution to \eqref{eq:DynamicWithNonLocalInitialCondition}. This concludes the proof. 
\end{proof}
\begin{remark}[Iterative scheme from Nadler's Fixed Point Theorem]
    The proof of Theorem \ref{theo:Existence-MainResult} also provides a Picard-type iterative scheme. Indeed, let $z_{0}(\cdot)\in C([0,T];\mathcal{H})$ be arbitrary and choose
    \begin{align*}
        z_{1}(\cdot)\in \operatorname{Sol}(h(z_{0}(\cdot))).
    \end{align*}
    Once $z_{n}(\cdot)$ has been constructed, with $z_{n}(\cdot)\in \operatorname{Sol}(h(z_{n-1}(\cdot)))$, choose
    \begin{align*}
        z_{n+1}(\cdot)\in \operatorname{Sol}(h(z_{n}(\cdot)))
    \end{align*}
    such that
    \begin{align*}
        \|z_{n+1}-z_{n}\|_{\infty}
        \leq \alpha\,\operatorname{Haus}(\operatorname{Sol}(h(z_{n}(\cdot))),\operatorname{Sol}(h(z_{n-1}(\cdot)))),
    \end{align*}
    for some $\alpha>1$ satisfying
    \begin{align*}
        \alpha L_{h}\exp\left[\int_{0}^{T}\left(L_{F}(s)+\int_{0}^{s}L_{g}(s,\sigma)d\sigma\right)ds\right]<1.
    \end{align*}
    Since $z_{n+1}(\cdot)\in \operatorname{Sol}(h(z_{n}(\cdot)))$, it follows that $z_{n+1}(\cdot)$ is a solution of
    \begin{align*}
        \begin{cases}
            \displaystyle\dot{z}_{n+1}(t)\in -A(t)z_{n+1}(t) + F(t,z_{n+1}(t)) + \int_{0}^{t}g(t,s,z_{n+1}(s))ds\quad\textrm{a.e. }t\in[0,T],\\
            z_{n+1}(0)=h(z_{n}(\cdot)).
        \end{cases}
    \end{align*}
    Moreover, by Lemma \ref{lem:ExistenceResultPart3},
    \begin{align*}
        \operatorname{Haus}(\operatorname{Sol}(h(z_{n}(\cdot))),\operatorname{Sol}(h(z_{n-1}(\cdot))))
        \leq L_{h}\exp\left[\int_{0}^{T}\left(L_{F}(s)+\int_{0}^{s}L_{g}(s,\sigma)d\sigma\right)ds\right]\|z_{n}-z_{n-1}\|_{\infty},
    \end{align*}
    and therefore
    \begin{align*}
        \|z_{n+1}-z_{n}\|_{\infty}
        \leq \alpha L_{h}\exp\left[\int_{0}^{T}\left(L_{F}(s)+\int_{0}^{s}L_{g}(s,\sigma)d\sigma\right)ds\right]\|z_{n}-z_{n-1}\|_{\infty}.
    \end{align*}
    Setting
    \begin{align*}
        q:=\alpha L_{h}\exp\left[\int_{0}^{T}\left(L_{F}(s)+\int_{0}^{s}L_{g}(s,\sigma)d\sigma\right)ds\right]<1,
    \end{align*}
    one obtains
    \begin{align*}
        \|z_{n+1}-z_{n}\|_{\infty}\leq q\|z_{n}-z_{n-1}\|_{\infty},
    \end{align*}
    and consequently
    \begin{align*}
        \|z_{n+1}-z_{n}\|_{\infty}\leq q^{n}\|z_{1}-z_{0}\|_{\infty}.
    \end{align*}
    Hence, $(z_{n})_{n}$ is a Cauchy sequence in $C([0,T];\mathcal{H})$, and therefore converges uniformly to some $x(\cdot)\in C([0,T];\mathcal{H})$.

    Finally, since $z_{n+1}(\cdot)\in \operatorname{Sol}(h(z_{n}(\cdot)))$, one has
    \begin{align*}
        d_{\operatorname{Sol}(h(x(\cdot)))}(z_{n+1}(\cdot))
        \leq \operatorname{Haus}(\operatorname{Sol}(h(z_{n}(\cdot))),\operatorname{Sol}(h(x(\cdot)))).
    \end{align*}
    By Lemma \ref{lem:ExistenceResultPart3} and $(\mathcal{H}_{h})$, the right-hand side converges to zero as $n\to+\infty$. Since $z_{n+1}\to x$ uniformly and $\operatorname{Sol}(h(x(\cdot)))$ is closed, it follows that
    \begin{align*}
        x(\cdot)\in \operatorname{Sol}(h(x(\cdot))).
    \end{align*}
    Therefore, $x(\cdot)$ is a solution to Problem \eqref{eq:DynamicWithNonLocalInitialCondition}.
\end{remark}

\section{Selected Examples and Remarks} 

\paragraph{Examples of Autonomous Maximal Monotone Operators:} Some examples of autonomous maximal monotone operators are presented below. In addition, several comments concerning the results obtained in the previous section are provided, and the application of Theorem \ref{theo:Existence-MainResult} to establish the existence of solutions for models related to the heat equation and the sweeping process is discussed. These models are introduced in detail below.

\begin{ejemplo}
    \label{ejemplo:MaximalMonotoneOperators}
    The following examples of maximal monotone operators are independent of $t\in[0,T]$ and illustrate several classes of dynamics covered by Theorem \ref{theo:Existence-MainResult2}.
    \begin{enumerate}
        \item Let $\mathcal{H}=\mathbb{R}^{n}$ and let $M\in\mathcal{M}_{n}(\mathbb{R})$ be a symmetric positive semidefinite matrix. Then $Ax:=Mx$ is maximal monotone with $D(A)=\mathbb{R}^{n}$.
        
        \item\textit{Subdifferential of a convex function \cite{MR1491362}:} Let $\varphi\colon\mathcal{H}\to\mathbb{R}\cup\{+\infty\}$ be a proper, convex, and lower semicontinuous function, that is, $\varphi\in\Gamma_{0}(\mathcal{H})$. Then the operator $A\colon\mathcal{H}\rightrightarrows\mathcal{H}$ defined by $A(x):=\partial\varphi(x)$ is maximal monotone, where:
        \begin{align*}
            \partial\varphi(x):=\{\xi\in\mathcal{H}:\varphi(y)-\varphi(x)\geq\langle\xi,y-x\rangle\quad\forall y\in\mathcal{H}\} \quad\forall x\in\operatorname{dom}(\varphi)
        \end{align*}
        denotes the subdifferential of $\varphi$. In this case,
        \begin{align*}
            D(A)=\{x\in\mathcal{H}:\partial\varphi(x)\neq\emptyset\}\subseteq\operatorname{dom}(\varphi),
        \end{align*}
        where
        \begin{align*}
            \operatorname{dom}(\varphi):=\{x\in\mathcal{H}:\varphi(x)<+\infty\}.
        \end{align*}
        Moreover, by the Brøndsted-Rockafellar Theorem (proved in \cite{MR178103}), one has $\overline{D(A)}=\operatorname{dom}(\varphi)$.
        
        \item\textit{Laplacian with Dirichlet boundary condition:} Let $\mathcal{H}=L^{2}(\Omega)$, where $\Omega\subseteq\mathbb{R}^{n}$ is a sufficiently regular open set. Consider $Au=-\Delta u$, with domain $D(A)=H^{2}(\Omega)\cap H_{0}^{1}(\Omega)$. Then $A$ is maximal monotone in $L^{2}(\Omega)$. Moreover, $A$ is the subdifferential of the functional $\varphi\colon L^{2}(\Omega)\to\mathbb{R}\cup\{+\infty\}$ given by
        \begin{align*}
            \varphi(u):=
            \begin{cases}
                \displaystyle\frac{1}{2}\int_{\Omega}|\nabla u(x)|^{2}dx &\textrm{if }u\in H_{0}^{1}(\Omega),\\
                +\infty&\textrm{otherwise}.
            \end{cases}
        \end{align*}

        \item\textit{$p$-Laplacian with Dirichlet boundary condition:} Let $\mathcal{H}=L^{2}(\Omega)$ as in the previous case. Consider the $p$-Laplacian
        \begin{align*}
            Au=-\operatorname{div}(|\nabla u|^{p-2}\nabla u),
        \end{align*}
        for $p>1$. This operator is the subdifferential of the functional $\varphi\colon L^{2}(\Omega)\to\mathbb{R}\cup\{+\infty\}$ given by
        \begin{align*}
            \varphi(u) :=
            \begin{cases}
                \displaystyle\frac{1}{p}\int_{\Omega}|\nabla u(x)|^{p}dx &\textrm{if }u \in L^{2}(\Omega)\cap W_{0}^{1,p}(\Omega),\\
                +\infty&\textrm{otherwise}.
            \end{cases}
        \end{align*}
        Therefore, $\partial\varphi(u)=Au$, for every $u\in D(A)$, where
        \begin{align*}
            D(A) = \{u\in L^{2}(\Omega)\cap W_{0}^{1,p}(\Omega):  -\operatorname{div}(|\nabla u|^{p-2}\nabla u)\in L^{2}(\Omega) \}.
        \end{align*}

        \item\textit{Normal cone to a convex set:} Let $C\subseteq\mathcal{H}$ be a nonempty, closed and convex set. The normal cone to $C$ is defined by
        \begin{align*}
            N_{C}(x):=\{\xi\in\mathcal{H}:\langle\xi,y-x\rangle\leq0\quad\forall y\in C\}\quad\forall x\in C,
        \end{align*}
        and $N_{C}(x):=\emptyset$ if $x\notin C$. Then $A=N_{C}$ is maximal monotone and $D(A)=C$. Indeed, $N_{C}$ is the subdifferential of the indicator function $\delta_{C}\colon\mathcal{H}\to\mathbb{R}\cup\{+\infty\}$, given by
        \begin{align*}
            \delta_{C}(x)=
            \begin{cases}
                0&\textrm{if }x\in C,\\
                +\infty&\textrm{if }x\notin C.
            \end{cases}
        \end{align*}

        \item\textit{Obstacle operator:} Let $\mathcal{H}=L^{2}(\Omega)$, and let $C\subseteq L^{2}(\Omega)$ be a nonempty, closed and convex set of the form
        \begin{align*}
            C = \{u\in L^{2}(\Omega):u(\omega)\geq \psi(\omega) \quad\textrm{a.e. } \omega\in\Omega\},
        \end{align*}
        for a suitable obstacle function $\psi\colon\Omega\to\mathbb{R}$. Let $\varphi\colon\mathcal{H}\to\mathbb{R}\cup\{+\infty\}$ be a proper, convex and lower semicontinuous function. If the standard qualification condition for the subdifferential sum rule holds (see section 16.4 in \cite{MR3616647}), then:
        \begin{align*}
            A=\partial\varphi+N_{C}
        \end{align*}
        is maximal monotone. Equivalently, in this case,
        \begin{align*}
            A=\partial(\varphi+\delta_{C}).
        \end{align*}
        Such operators appear naturally in variational inequalities with obstacles.
    \end{enumerate}
\end{ejemplo}

For more details on the treatment of the Laplace operator, the reader is referred to Chapter 10 of \cite{MR2759829}. Concerning maximal monotone operators in the context of partial differential equations, the reader is referred to \cite{MR1422252}.

\begin{remark}[A Priori Bounds in Theorem \ref{teo:BoundsForEpsilonSolutionsAndHisSelections}]
    \vphantom{}
    \begin{enumerate}
        \item Theorem \ref{teo:BoundsForEpsilonSolutionsAndHisSelections} provides an a priori bound for $\varepsilon$-solutions and solutions of the Cauchy problem \eqref{eq:DynamicWithLocalInitialCondition}. More precisely, for a fixed initial condition, there exist $M>0$ and $\mu\in L^{1}([0,T];\mathbb{R}_{+})$ such that every $\varepsilon$-solution $x(\cdot)=x(x_{0},f)(\cdot)$ satisfies
        \begin{align*}
            \|x(t)\|\leq M\quad\forall t\in[0,T],
        \end{align*}
        and its associated perturbation
        \begin{align*}
            f_{x}(t)\in F(t,x(t)+\varepsilon\mathbb{B})+\int_{0}^{t}g(t,s,x(s))ds
        \end{align*}
        is uniformly integrably bounded by $\mu(\cdot)$, in the sense that
        \begin{align*}
            \|F(t,x(t))\|_{\max}+\int_{0}^{t}\|g(t,s,x(s))\|ds\leq\mu(t)\quad\textrm{for a.e. }t\in[0,T].
        \end{align*}
        However, these bounds are tied to the fixed Cauchy initial condition. Therefore, if $(x_{0}^{n})_{n}\subseteq\overline{D(A)}$ is a sequence of initial conditions and $(x_{n})_{n}$ is a sequence of $\varepsilon$-solutions satisfying $x_{n}(0)=x_{0}^{n}$, one cannot conclude, in general, the existence of a single constant $M>0$ such that
        \begin{align*}
            \|x_{n}(t)\|\leq M\quad\forall t\in[0,T],\ \forall n\in\mathbb{N}.
        \end{align*}
        For instance, let $C\subseteq\mathbb{R}^{m}$ be a nonempty, closed, convex and unbounded set, and let $A=N_{C}$. Then $A$ is maximal monotone and $D(A)=C$. If $F=0$ and $g=0$, the dynamics reduce to
        \begin{align*}
            \dot{x}(t)\in -N_{C}(x(t))\quad\textrm{a.e. }t\in[0,T].
        \end{align*}
        For every $x_{0}\in C$, the unique solution is the constant trajectory
        \begin{align*}
            x(t)=x_{0}\quad\forall t\in[0,T].
        \end{align*}
        Indeed, $0\in N_{C}(x_{0})$, and therefore $\dot{x}(t)=0\in -N_{C}(x_{0})$ for every $t\in[0,T]$. Hence, if $(x_{0}^{n})_{n}$ in $C$ is such that $\lim_{n\to\infty}\|x_{0}^{n}\|=+\infty$, and $x_{n}(\cdot)$ denotes the corresponding solution, then
        \begin{align*}
            x_{n}(t)=x_{0}^{n}\quad\forall t\in[0,T].
        \end{align*}
        Consequently,
        \begin{align*}
            \lim_{n\to\infty}\|x_{n}\|_{\infty}=\lim_{n\to\infty}\|x_{0}^{n}\|=+\infty,
        \end{align*}
        and no uniform bound can hold for arbitrary unbounded sequences of initial conditions.

        \item Furthermore, Theorem \ref{teo:BoundsForEpsilonSolutionsAndHisSelections} does not provide a uniform estimate for $\dot{x}(\cdot)$. This is due to the fact that the maximal monotone term may be unbounded, even when the perturbation is bounded. For example, in $\mathbb{R}$, consider the proper, convex and lower semicontinuous function:
        \begin{align*}
            \varphi(x):=
            \begin{cases}
                -\log x, & x>0,\\
                +\infty, & x\leq0,
            \end{cases}
        \end{align*}
        and let $A=\partial\varphi$. Then $A$ is maximal monotone and $Ax = \{-1/x\}$ for all $x>0$. The homogeneous inclusion $\dot{x}(t)\in -Ax(t)$, becomes
        \begin{align*}
            \dot{x}(t)=\frac{1}{x(t)} \quad\textrm{a.e. }t\in[0,T].
        \end{align*}
        If $x_{n}(0)=1/n$, then
        \begin{align*}
            x_{n}(t)=\sqrt{\frac{1}{n^{2}}+2t} \quad\forall t\in[0,T],
        \end{align*}
        and therefore
        \begin{align*}
            \dot{x}_{n}(t)=\frac{1}{\sqrt{\frac{1}{n^{2}}+2t}} = \sqrt{\frac{n^{2}}{1+2n^{2}t}} \quad\textrm{a.e. }t\in[0,T].
        \end{align*}
        Although the trajectories $(x_{n})_{n}$ are uniformly bounded on $[0,T]$, one has $\|\dot{x}_{n}\|_{L^{\infty}(0,T)}=n$ for all $n\in\mathbb{N}$, which shows that no uniform bound for the velocities can be expected in general.
    \end{enumerate}
\end{remark}

\paragraph{Parabolic Inclusion Driven by the $p$-Laplacian:} The next example illustrates the dynamic \eqref{eq:DynamicWithNonLocalInitialCondition} when $A$ is chosen as the maximal monotone realization of the Dirichlet $p$-Laplacian, as described in Example \ref{ejemplo:MaximalMonotoneOperators}.4. This choice leads to a nonlinear parabolic inclusion driven by the $p$-Laplacian. Such equations constitute a natural generalization of the heat equation, since the case $p=2$ reduces to the classical Dirichlet Laplacian. For $p\neq2$, the diffusion becomes nonlinear and depends on the magnitude of the spatial gradient, through the flux law $|\nabla u|^{p-2}\nabla u$. Therefore, this class of problems is suitable for modeling nonlinear diffusion phenomena arising, for instance, in non-Newtonian fluids, nonlinear elasticity, filtration through porous media, and variational models associated with energies of $p$-growth. In the present framework, the Volterra term incorporates memory effects, while the nonlocal initial condition allows the initial state to depend on the whole evolution of the system.

\begin{ejemplo}
    \label{ejemplo:pLaplacianDifferentialInclusion}
    Let $Au=-\operatorname{div}(|\nabla u|^{p-2}\nabla u)$, with $p>1$, be the maximal monotone operator described in Example \ref{ejemplo:MaximalMonotoneOperators}.4. Then, the abstract problem becomes the following nonlocal parabolic inclusion:
    \begin{align*}
        \begin{cases}
            \partial_{t}u(t,\xi)-\operatorname{div}(|\nabla_{\xi}u(t,\xi)|^{p-2}\nabla_{\xi}u(t,\xi))\in F(t,u(t))(\xi)+\int_{0}^{t}g(t,s,u(s))(\xi)ds&\quad\textrm{a.e. }(t,\xi)\in[0,T]\times\Omega,\\
            u(t,\xi)=0&\quad\textrm{for }(t,\xi)\in[0,T]\times\partial\Omega,\\
            u(0,\xi)=h(u(\cdot))(\xi)&\quad\textrm{for }\xi\in\Omega.
        \end{cases}
    \end{align*}
    For instance, one may take
    \begin{align*}
        F(t,u):=a(t,\cdot)u+\beta(t)\mathbb{B}_{L^{2}(\Omega)},
    \end{align*}
    where $a\in L^{1}([0,T];L^{\infty}(\Omega))$, $\beta\in L^{1}([0,T];\mathbb{R}_{+})$, and
    \begin{align*}
        g(t,s,u):=K(t,s,\cdot)u,
    \end{align*}
    with $K\in L^{2}(D;L^{\infty}(\Omega))$. Finally, define
    \begin{align*}
        h(u(\cdot)):=u_{*}+\lambda\int_{0}^{T}u(s)ds,
    \end{align*}
    where $u_{*}\in L^{2}(\Omega)$ and $\lambda\geq0$ is chosen so that $h(u(\cdot))\in\overline{D(A)}=L^{2}(\Omega)$. Moreover,
    \begin{align*}
        \|h(u(\cdot))-h(v(\cdot))\|_{L^{2}(\Omega)}
        \leq \lambda\int_{0}^{T}\|u(s)-v(s)\|_{L^{2}(\Omega)}ds
        \leq \lambda T\|u-v\|_{\infty}.
    \end{align*}
    Hence, $L_{h}=\lambda T$. Therefore, by \ref{theo:Existence-MainResult2}, this nonlocal parabolic inclusion admits at least one solution provided that
    \begin{align*}
        \lambda T\exp\left[\int_{0}^{T}\left(\|a(s,\cdot)\|_{L^{\infty}(\Omega)}+\int_{0}^{s}\|K(s,\sigma,\cdot)\|_{L^{\infty}(\Omega)}d\sigma\right)ds\right]<1.
    \end{align*}
\end{ejemplo}

\begin{remark}[Heat equation with multivalued OSL and Volterra perturbations]
    In particular, the previous example includes the classical heat equation as a special case. Indeed, when $p=2$, the $p$-Laplacian operator reduces to the Dirichlet Laplacian, namely $A=-\Delta$. Hence, the abstract nonlocal inclusion gives rise to the following nonlocal parabolic inclusion:
    \begin{align*}
        \begin{cases}
            \partial_{t}u(t,\xi)-\Delta_{\xi}u(t,\xi)\in F(t,u(t))(\xi)+\int_{0}^{t}g(t,s,u(s))(\xi)ds&\quad\textrm{a.e. }(t,\xi)\in[0,T]\times\Omega,\\
            u(t,\xi)=0&\quad\textrm{for }(t,\xi)\in[0,T]\times\partial\Omega,\\
            u(0,\xi)=h(u(\cdot))(\xi)&\quad\textrm{for }\xi\in\Omega.
        \end{cases}
    \end{align*}
    This shows that Theorem \ref{theo:Existence-MainResult2} applies, in particular, to heat-type equations with memory effects and nonlocal initial conditions. The term $\int_{0}^{t}g(t,s,u(s))ds$ represents a Volterra perturbation depending on the past history of the temperature distribution, while the condition $u(0,\xi)=h(u(\cdot))(\xi)$ allows the initial state to depend on the whole evolution of the system.
\end{remark}

\begin{remark}[A first-order antiplane elastic model with nonlinear stress]
    The $p$-Laplacian appears naturally in overdamped antiplane elastic models, see \cite{MR1327716} for a survey of antiplane shear deformations in linear and nonlinear solid mechanics, on which the reduction below is based. Let $\Omega\subseteq\mathbb{R}^{2}$ be a bounded domain and consider an antiplane displacement field of the form
    \begin{align*}
        \mathbf{u}(t,\xi)=(0,0,w(t,\xi)), \qquad \xi:=(\xi_{1},\xi_{2})\in\Omega.
    \end{align*}
    This ansatz is a classical reduction in solid mechanics, since it reduces a three-dimensional deformation to a scalar displacement depending on two spatial variables.
    Let $\boldsymbol{\sigma}$ denote the Cauchy stress tensor. In the antiplane setting, the relevant components of $\boldsymbol{\sigma}$ are the shear stresses $\sigma_{31}$ and $\sigma_{32}$. Assume that these components satisfy the nonlinear constitutive law
    \begin{align*}
        \sigma_{31} = |\nabla_{\xi}w|^{p-2}\partial_{\xi_{1}}w
        \quad\textrm{and}\quad
        \sigma_{32} = |\nabla_{\xi}w|^{p-2}\partial_{\xi_{2}}w.
    \end{align*}
    Thus, the third component of the internal force $\operatorname{div}\boldsymbol{\sigma}$ is given by
    \begin{align*}
        (\operatorname{div}\boldsymbol{\sigma})_{3}
        = \partial_{\xi_{1}}\sigma_{31} + \partial_{\xi_{2}}\sigma_{32}
        = \operatorname{div}_{\xi}\left(|\nabla_{\xi}w|^{p-2}\nabla_{\xi}w\right).
    \end{align*}
    Hence, the $p$-Laplacian appears as the divergence of the nonlinear shear stress law.
    \begin{enumerate}
        \item In a fully dynamic model, the balance of linear momentum contains the inertial term $\rho\partial_{tt}w$, where $\rho>0$ denotes the mass density. In contrast, in an overdamped regime, inertial effects are neglected when compared with frictional drag and elastic restoring forces. The balance law is then written in first order in time as
        \begin{align*}
            \eta\partial_{t}w(t,\xi) - (\operatorname{div}\boldsymbol{\sigma})_{3}(t,\xi)
            = b(t,\xi) + \int_{0}^{t}g(t,s,w(s))(\xi)ds,
        \end{align*}
        where $\eta>0$ is an external frictional drag coefficient, $b$ is an external body force, and the Volterra term represents hereditary effects. Therefore, after taking $\eta=1$, one obtains
        \begin{align*}
            \partial_{t}w(t,\xi) - \operatorname{div}_{\xi} \left( |\nabla_{\xi}w(t,\xi)|^{p-2}\nabla_{\xi}w(t,\xi)\right)
            = b(t,\xi) + \int_{0}^{t} g(t,s,w(s))(\xi)ds.
        \end{align*}
        \item The first-order character of the model is a consequence of the overdamped regime. More precisely, instead of the inertial term $\rho\partial_{tt}w$, the dominant dissipative mechanism is represented by the external frictional drag term $\eta\partial_{t}w$. Thus, the resulting evolution is parabolic rather than hyperbolic. The Volterra term accounts for hereditary effects, as in viscoelastic materials with fading memory, see for instance \cite{MR1153021}. For related variational models in viscoelastic contact mechanics, see also \cite{SofoneaHanShillor2006}.
    \end{enumerate}
\end{remark}

\paragraph{Examples of Time-Dependent Maximal Monotone Operators:} The following example illustrates several classes of time-dependent maximal monotone operators satisfying Vladimirov's absolute continuity condition. In particular, it shows that this assumption naturally arises for subdifferential operators associated with convex functions whose time dependence is introduced through translations, affine perturbations, or quadratic terms. These constructions provide a useful class of nonautonomous evolution problems covered by the general framework considered in this work and show that Vladimirov's condition is compatible with several standard time-dependent convex energies.

\begin{ejemplo}[Vladimirov absolute continuity]\label{vladac} 
    Let $\mathcal{H}$  be a real Hilbert space and let $\Gamma_{0}(\mathcal{H})$ be the set of extended real-valued, proper, lower semicontinuous and convex functions.
    \begin{enumerate}
        \item \textit{Translations of an arbitrary convex function:} Let $g(\cdot)\in \Gamma_{0}(\mathcal{H})$ and $c(\cdot)\in W^{1,1}([0,1];\mathcal{H})$ and define the function $f\in \Gamma_0(\mathcal{H})$ by 
        \begin{align*}
            f(t,x)=g(x-c(t)).
        \end{align*}
        Then 
        \begin{align*}
            \partial f(t,x)=\partial g(x-c(t)), \quad \forall x\in \mathcal{H}, \ \forall t\in [0, 1].
        \end{align*}
        and:
        \begin{align*}
            \operatorname{dis}(\partial f(t_1, \cdot), \partial f(t_2, \cdot)) \leq \Vert c(t_1)-c(t_2)\Vert.
        \end{align*}
        If $a(t)=\int_{0}^{t}\Vert c(\tau)\Vert d\tau$ then $a\in W^{1,1}([0,1];\mathbb{R})$ and $\Vert c(t_{1})-c(t_{2})\Vert\leq \vert a(t_{1})-a(t_{2})\vert$. 
        Thus the required Vladimirov absolute continuity condition holds.
    
        \item \textit{Adding a time-dependent affine term:} The previous class can be enlarged by considering
        $$f(t,x)=g(x-c(t))+⟨p(t),x⟩+r(t),$$
        where  $g\in \Gamma_0(\mathcal{H})$ and $c, p \in W^{1,1}((0,1), \mathcal{H})$ and $r\in W^{1,1}(0, 1)$. 
        Note that  the scalar term $r(t)$  does not affect the subdifferential, $\partial f(t,x)=\partial g(x-c(t))+p(t).$ A convenient sufficient condition for Vladimirov absolute continuity is therefore $c, p \in W^{1,1}((0,1), \mathcal{H})$ together with suitable bounds on the domains of the subdifferentials. In particular, when $g$ has bounded subgradients on its effective domain, the estimate becomes
        $$dis(\partial f(t_1, \cdot), \partial f(t_2, \cdot)) \leq C(\Vert c(t_1)-c(t_2)\Vert+ \Vert p(t_1)-p(t_2)\Vert )$$
        for an appropriate constant $C>0$. Therefore, it is enough to set $a(t) = C\int_0^t (\Vert \dot{c}(s)\Vert + \Vert \dot{p}(s)\Vert)ds$, in order to get
        $$ dis(\partial f(t_1, \cdot), \partial f(t_2, \cdot)) \leq \vert a(t_1)-a(t_2)\vert.$$
        
        \item \textit{Time-dependent quadratic perturbations:} Let
        $$f(t,x)=g(x-c(t))+{1\over 2}\langle Q(t)x,x\rangle+\langle p(t),x\rangle+r(t),$$
        where  $g\in \Gamma_0(\mathcal{H})$, $c, p \in W^{1,1}((0,1), \mathcal{H})$, $r\in W^{1,1}(0, 1)$ and $Q(t): \mathcal{H}\mapsto \mathcal{H}$ is a linear continuous operator which is self-adjoint and positive semidefinite for every $t$. Assume $Q\in W^{1,1}((0,1), \mathcal{L}(\mathcal{H}))$, where 
        $\mathcal{L}(\mathcal{H})$ denotes the linear space of bounded operators.  Then $f(t, \cdot) \in \Gamma_0(\mathcal{H})$ and 
        $$\partial f(t,x)=\partial g(x-c(t))+Q(t)x+p(t).$$
        If the family $Q(t)$  is uniformly bounded then the family of subdifferentials has absolutely continuous variation in the
        Vladimirov sense under the usual boundedness assumptions on the relevant graphs.
    \end{enumerate}
\end{ejemplo}

\begin{remark}[A finite-dimensional parameterization] 
    A very useful general construction is
    \begin{align*}
        f(t,x) := g(x) + \sum_{k=1}^{m} \alpha_{k}(t)g_{k}(x) + \langle p(t),x\rangle + r(t),
    \end{align*}
    where $g$, $g_{k}$, $\alpha_{k}$, $p(t)$ and $r(t)$ satisfies appropriate assumptions to ensure the convexity and the Vladimirov absolute continuity. 
\end{remark}

\paragraph{Volterra Sweeping Process:} The dynamic \eqref{eq:DynamicWithNonLocalInitialCondition} is illustrated in the next example in the particular case where $A(t)=N_{C(t)}$, with $(C(t))_{t\in[0,T]}$ being a family of nonempty, closed, and convex subsets of $\mathbb{R}^{n}$. In this setting, the dynamics can be interpreted as a perturbed sweeping process governed by a moving convex constraint, together with a Volterra memory term and a nonlocal initial condition. Differential inclusions involving normal cones to moving convex sets were introduced by J.-J. Moreau in a series of seminal papers \cite{MR0637727, MR0637728, MR0508661}, and have since been used as a natural modeling framework for constrained evolution problems, with applications in contact mechanics, electrical circuits, crowd motion, and related areas. A history-dependent variant, known as the Volterra sweeping process, was introduced later in \cite{MR4099068} by incorporating an integral term into the classical perturbed sweeping process. For well-posedness results concerning Volterra sweeping processes, the reader is referred to \cite{MR4836331, MR4883326}. The following example combines these features by considering a moving convex set, a nonlinear perturbation satisfying a one-sided Lipschitz condition, an exponentially fading memory term, and a nonlocal initial condition.

\begin{ejemplo}[Moving convex sweeping process with memory]
    \label{example:SweepingProcessWithMemory}
    Let $f\colon\mathcal{H}\to\mathcal{H}$, $g\colon D\times\mathcal{H}\to\mathcal{H}$, and $h\colon C([0,T];\mathcal{H})\to\mathcal{H}$ be the mappings defined by
    \begin{align*}
        &f(x):=
        \begin{cases}
            -\dfrac{x}{\sqrt{\|x\|}}, & x\neq 0,\\
            0, & x=0,
        \end{cases}\\
        &g(t,s,x):=\kappa e^{-\gamma(t-s)}Mx\qquad\forall (t,s)\in D,\ \forall x\in\mathcal{H},\\
        & C(t) := c(t)+D\qquad\forall t\in [0,T],\\
        &h(x(\cdot)):=\rho P_{C(0)}(x(T))\qquad\forall x(\cdot)\in C([0,T];\mathcal{H}),
    \end{align*}
    where $\gamma,\kappa>0$, $0<\rho\leq1$, and $M\colon \mathcal{H}\mapsto\mathcal{H}$ is a continuous operator, with  $\|M\|\leq1$, $c(\cdot)\in W^{1,1}([0,T]; \mathcal{H})$ and $D\subset \mathcal{H}$ is a closed convex set  containing $-c(0)$. Thus, by Example \ref{vladac}, for every $t,s\in[0,T]$,
    \begin{align*}
        \operatorname{Haus}(C(t),C(s))=\|z(t)-z(s)\|\leq\int_{\min\{s,t\}}^{\max\{s,t\}}\|\dot{c}(\tau)\|d\tau.
    \end{align*}
    Hence, by defining
    \begin{align*}
        a(t):=\int_{0}^{t}\|\dot{c}(\tau)\|d\tau,
    \end{align*}
    one has $a\in W^{1,1}([0,T])$ and
    \begin{align*}
        \operatorname{Haus}(C(t),C(s))\leq |a(t)-a(s)|\qquad\forall t,s\in[0,T].
    \end{align*}
    Moreover, the family of maximal monotone operators $A(t):=N_{C(t)}$ satisfies hypothesis $(\mathcal{H}_A$. Furthermore, since
    The mapping $f$ is continuous on $\mathcal{H}$. Indeed, it is continuous on $\mathcal{H}\setminus\{0\}$ and
    \begin{align*}
        \lim_{x\to0}\|f(x)-f(0)\|=\lim_{x\to0}\sqrt{\|x\|}=0.
    \end{align*}
    Moreover, $f$ is not Lipschitz continuous in any neighborhood of the origin, since
    \begin{align*}
        \lim_{x\to0}\frac{\|f(x)-f(0)\|}{\|x\|}=\lim_{x\to0}\frac{1}{\sqrt{\|x\|}}=+\infty.
    \end{align*}
    On the other hand, $f$ satisfies the one-sided Lipschitz condition with constant $0$. Indeed, the mapping $x\mapsto \frac{x}{\sqrt{\|x\|}}$ is the gradient of the convex function
    \begin{align*}
        \varphi(x):=\frac{2}{3}\|x\|^{3/2}.
    \end{align*}
    Hence, this mapping is monotone, and therefore
    \begin{align*}
        \langle f(x)-f(y),x-y\rangle\leq0\qquad\forall x,y\in\mathcal{H}.
    \end{align*}
    In addition,
    \begin{align*}
        \|f(x)\|=\sqrt{\|x\|}\leq1+\|x\|\qquad\forall x\in\mathcal{H},
    \end{align*}
    so that $f$ satisfies a linear growth condition. The corresponding moving sweeping process with memory and nonlocal initial condition is given by
    \begin{align*}
        \begin{cases}
            \displaystyle\dot{x}(t)\in -N_{C(t)}(x(t))+f(x(t))+\int_{0}^{t}g(t,s,x(s))ds\quad\textrm{a.e. }t\in[0,T],\\
            x(0)=h(x(\cdot)).
        \end{cases}
    \end{align*}
    Since $P_{C(0)}(x(T))\in C(0)$ for every $x(\cdot)\in C([0,T];\mathcal{H})$, and since $0<\rho\leq1$ and $-c(0)\in D$, one has
    \begin{align*}
        h(x(\cdot))=\rho P_{C(0)}(x(T))\in C(0).
    \end{align*}
    Moreover, since the metric projection onto a nonempty closed convex set is nonexpansive,
    \begin{align*}
        \|h(x(\cdot))-h(y(\cdot))\|\leq\rho\|x(T)-y(T)\|\leq\rho\|x(\cdot)-y(\cdot)\|_{\infty}.
    \end{align*}
    Therefore, $h$ is $\rho$-Lipschitz. The Volterra perturbation satisfies
    \begin{align*}
        \|g(t,s,x)-g(t,s,y)\|\leq\kappa e^{-\gamma(t-s)}\|x-y\|\qquad\forall (t,s)\in D,\ \forall x,y\in\mathcal{H}.
    \end{align*}
    Therefore,
    \begin{align*}
        \int_{0}^{T}\int_{0}^{s}\kappa e^{-\gamma(s-\tau)}d\tau ds=\frac{\kappa}{\gamma}\left(T-\frac{1-e^{-\gamma T}}{\gamma}\right).
    \end{align*}
    Hence, Corollary \ref{cor:VolterraSweepingProcess} ensures the existence of at least one solution provided that
    \begin{align*}
        \rho\exp\left[\frac{\kappa}{\gamma}\left(T-\frac{1-e^{-\gamma T}}{\gamma}\right)\right]<1.
    \end{align*}
    The parameter $\gamma$ governs the exponential fading of memory, so that larger values of $\gamma$ correspond to faster forgetting of past states. The parameter $\kappa$ controls the magnitude of the memory contribution, $\beta$ determines the amplitude of the motion of the constraint set, and $\rho$ measures the influence of the terminal state in the nonlocal initial condition. For instance, the choice $M=-I$ produces a dissipative memory term, whereas, in dimension $2$, a rotation matrix $M$ generates a memory contribution with rotational effects.
\end{ejemplo}

\begin{remark}[Iterative scheme for Example \ref{example:SweepingProcessWithMemory}]
    Let $\pi_{N}:=\{t_{k}\in[0,T]:t_{k}=\tfrac{kT}{N}\textrm{ for }k=0,\dots,N\}$ be a uniform partition, and let $\Delta t:=\tfrac{T}{N}$ denote the time step. A semi-implicit Euler discretization of the sweeping process dynamics is given by
    \begin{align*}
        -\frac{x_{n+1}-x_{n}}{\Delta t}+f(x_{n})+\sum_{i=0}^{n}\Delta t\,g(t_{n},t_{i},x_{i})\in N_{C(t_{n+1})}(x_{n+1})\quad\forall n=0,\dots,N-1.
    \end{align*}
    Since $P_{C(t)}=(I+\Delta t N_{C(t)})^{-1}$ for every $t\in[0,T]$, it follows that
    \begin{align*}
        x_{n+1}=P_{C(t_{n+1})}\left[x_{n}+\Delta t\left(f(x_{n})+\sum_{i=0}^{n}\Delta t\,g(t_{n},t_{i},x_{i})\right)\right]\quad\forall n=0,\dots,N-1.
    \end{align*}
    Applying this scheme to Example \ref{example:SweepingProcessWithMemory}, with $D$ the closed unit ball $\mathbb{B}$ of $\mathcal{H}$ and $g(t,s,x)=\kappa e^{-\gamma(t-s)}Mx$, one obtains
    \begin{align*}
        x_{n+1}=P_{C(t_{n+1})}\left[x_{n}+\Delta t\left(f(x_{n})+\kappa\sum_{i=0}^{n}\Delta t\,e^{-\gamma(t_{n}-t_{i})}Mx_{i}\right)\right].
    \end{align*}
    Since $C(t)=c(t)+\mathbb{B}$, the metric projection onto $C(t)$ is explicitly given by
    \begin{align*}
        P_{C(t)}(y)=c(t)+P_{\mathbb{B}}(y-c(t)),
    \end{align*}
    where
    \begin{align*}
        P_{\mathbb{B}}(y)=
        \begin{cases}
            y, & \|y\|\leq1,\\
            \dfrac{y}{\|y\|}, & \|y\|>1.
        \end{cases}
    \end{align*}
    With this construction, an iterative procedure for the nonlocal initial condition is given by:
    \begin{center}
        \begin{minipage}{0.9\textwidth}
        \hrule
        \vspace{0.2cm}
        \noindent\textbf{Scheme.}
        Let $(x_{n}^{0})_{n=0}^{N}$ be an initial discrete trajectory such that $x_{n}^{0}\in C(t_{n})$ for every $n=0,\dots,N$, and let $\varepsilon_{\operatorname{tol}}>0$ be fixed. For every $k\in\mathbb{N}$, define
        \begin{align*}
            x_{0}^{k+1}:=\rho P_{C(0)}(x_{N}^{k}).
        \end{align*}
        Then, for each $n=0,\dots,N-1$, construct recursively
        \begin{align*}
            x_{n+1}^{k+1}:=P_{C(t_{n+1})}\left[x_{n}^{k+1}+\Delta t\left(f(x_{n}^{k+1})+\kappa\sum_{i=0}^{n}\Delta t\,e^{-\gamma(t_{n}-t_{i})}Mx_{i}^{k+1}\right)\right].
        \end{align*}
        If
        \begin{align*}
            \max_{0\leq n\leq N}\|x_{n}^{k+1}-x_{n}^{k}\|<\varepsilon_{\operatorname{tol}},
        \end{align*}
        then the procedure is terminated. Otherwise, replace $k$ by $k+1$ and repeat the construction.
        \vspace{0.2cm}
        \hrule
        \end{minipage}
    \end{center}
    Since each time step is defined through the projection onto $C(t_{n+1})$, the discrete trajectory satisfies
    \begin{align*}
        x_{n}\in C(t_{n})\qquad\forall n=0,\dots,N.
    \end{align*}
    Moreover, since $C(0)=\mathbb{B}$, the nonlocal iteration takes the particular form $x_{0}^{k+1}=\rho P_{\mathbb{B}}(x_{N}^{k})$.
\end{remark}

\section{Conclusions}

We have established an existence framework for differential inclusions involving a nonautonomous maximal monotone operator, an OSL set-valued perturbation, and a Volterra memory term under nonlocal condition conditions. The combination of these three features allows the model to describe nonlinear evolution systems in which the governing operator varies with time, the instantaneous perturbation is multivalued and only one-sided Lipschitz, and the dynamics depend on the past through a memory kernel.

The assumptions imposed on the family $A(t)$ are particularly important. In addition to the maximal monotonicity of $A(t)$, the Vladimirov-type absolute continuity of the operator with respect to time, together with the linear growth condition of the mapping $x\mapsto A^{0}(t)x$, where $A^{0}(t)x$ is the element of minimal norm in $A(t)x$, provides sufficient control of the temporal variation of the graph of $A(t)$. These assumptions compensate for the lack of autonomy and make it possible to construct suitable approximations while retaining uniform estimates for the corresponding solutions.

On the perturbation side, the OSL condition imposed on the set-valued mapping $F(t,x)$, together with suitable measurability assumptions, is sufficient to derive the dissipative estimate required in the analysis, without imposing the classical Lipschitz continuity or noncompactness conditions. This is particularly relevant since the OSL framework allows for genuinely nonsmooth and multivalued nonlinearities that need not be globally Lipschitz or compact. Nevertheless, the one-sided structure provided by the OSL condition is strong enough to control the distance between approximate solutions and, consequently, to derive the stability estimates required for the convergence analysis.

Finally, the measurability and Lipschitz regularity assumptions on the Volterra kernel guarantee that the memory contribution is well defined and can be controlled in the appropriate integral estimates. In combination with the preceding assumptions, the Volterra term can therefore be incorporated into the existence argument without destroying the monotonicity and dissipativity structure of the problem.

Consequently, under these hypotheses, the associated differential inclusion admits a solution in the appropriate evolution space, satisfying the inclusion almost everywhere and incorporating both the instantaneous multivalued dynamics and the hereditary Volterra effect. The result shows that maximal monotonicity, Vladimirov absolute continuity, and linear growth of $A(t)$, together with the OSL property of $F$ and the regularity of the Volterra kernel, form a sufficiently robust set of assumptions for the existence theory of nonautonomous differential inclusions with memory and a Lipschitz local condition.

An important feature of this framework is that it weakens the classical assumptions in two directions simultaneously: The operator is allowed to be nonautonomous, while the perturbation $F$ need only satisfy an OSL condition rather than a two-sided Lipschitz condition and the noncompactness condition. The Volterra formulation further extends the theory to systems with memory. This provides a flexible setting for future developments concerning uniqueness and continuous dependence, long-time behavior, stability, optimal control, and numerical approximation of nonlinear evolution inclusions with hereditary effects.

\appendix
\section{Appendix: Proof of Theorem \ref{Kunze1}}

The following lemma is used in the proof of Theorem \ref{Kunze1}.  Its proof is exactly like that of Lemma 3.3.1 in \cite{MR616449}, modulo a change in the definition of the set ${\cal V}$ in the proof of the lemma.

\begin{lema}[\cite{MR616449}]
    \label{lemma:Partition} 
    Let $c(\cdot) \in L^{1}((0,T);[0,+\infty[)$, and assume, without loss of generality, that $c(\cdot)$ is defined at $0$. Then there exists a subset $C\subset [0,T]$ of null measure such that
    \begin{equation*}
        c(t) \in [0,+\infty[, \qquad \forall t\in [0,T]\backslash C,
    \end{equation*}
    and there exists a sequence of partitions $(\pi_{n})_{n}$ of $[0,T]$, where each partition $\pi_{n}$ is given by
    \begin{align*}
        0=t^{n}_{0}<t^{n}_{1}<\cdots <t_{N_{n}}^{n}=T,
    \end{align*}
    such that:
    \begin{enumerate}
        \item $t^{n}_{i}\notin C$, for each $i=0,\dots, N_{n}-1$. 
        \item $\displaystyle \lim_{n\to +\infty} \max_{1\leq i\leq N_{n}}|t^{n}_{i}-t^{n}_{i-1}| = 0$. 
        \item The functions $c_{n}\colon[0,T]\mapsto [0,+\infty[$ defined by :
        \begin{align*}
            c_{n}(t) = c(t_{i-1})\quad\textrm{on $(t_{i-1}, t_i)$ for each $i=1, \cdots, N_{n}$.}  
        \end{align*}
        satisfies
        \begin{align*}
            \lim_{n\to +\infty} \int_{0}^{T} | c(\tau)-c_n(\tau)| d\tau =0.
        \end{align*}
    \end{enumerate}
\end{lema}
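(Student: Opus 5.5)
This is Lemma \ref{lemma:Partition}, a Riemann-sum approximation result for an $L^{1}$ function by step functions whose values are samples of $c$ at the left endpoints. The plan is to follow the classical argument for Lemma 3.3.1 in \cite{MR616449}, which rests on averaging over shifted uniform grids.

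\textbf{Setup.} Extend $c$ by zero outside $[0,T]$ (or periodically), and let $C$ be a null set off which $c$ is finite; since $c\in L^1$, it is finite a.e. For $h=T/n$ and a shift $s\in[0,h)$, consider the grid points $s+jh$. Define the step function $c_{n,s}(\tau)=c(s+jh)$ for $\tau\in[s+jh,s+(j+1)h)$.

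\textbf{Averaging over shifts.} The key estimate is
\begin{align*}
    \frac1h\int_0^h\int_0^T |c(\tau)-c_{n,s}(\tau)|\,d\tau\,ds \le \sup_{|\eta|\le h}\int_{\mathbb{R}}|c(\tau)-c(\tau-\eta)|\,d\tau.
\end{align*}
To get it, swap the order of integration (Fubini/Tonelli). For fixed $\tau$, the sample point $s+jh$ ranges over $[\tau-h,\tau]$ as $s$ runs over a period, so the average over $s$ becomes $\frac1h\int_0^h\int|c(\tau)-c(\tau-\eta)|\,d\tau\,d\eta$. The right-hand side tends to $0$ as $h\to0$ by continuity of translations in $L^1$.

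\textbf{Choosing the shifts.} Let $\omega(h)$ denote that modulus. By Chebyshev's inequality, the set of $s\in[0,h)$ with $\int|c-c_{n,s}|\le 2\omega(h)$ has measure at least $h/2$. The set of $s$ for which some grid point $s+jh$ lies in the null set $C$ is a countable union of translates of $C$, hence null. So we can choose a good shift $s_n$ avoiding it.

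\textbf{Building the partitions.} Take $\pi_n$ to consist of $0$, the points $s_n+jh$ in $(0,T)$, and $T$. Two boundary issues need handling:
\begin{enumerate}
    \item On the first cell $(0,s_n)$, the value must be $c(0)$ rather than a grid sample. This cell has length $\le h$, so its contribution is $\int_0^{h}|c-c(0)|\le \int_0^h|c| + h\,c(0)\to0$, using that $c(0)$ is finite, which is the role of the assumption that $c$ is defined at $0$.
    \item For the same reason, put $0\notin C$ by removing $0$ from $C$.
\end{enumerate}
Properties 1 and 2 of the lemma are then immediate, since all cells have length $\le h$ and the left endpoints avoid $C$. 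Property 3 follows from $2\omega(h)+o(1)\to0$.

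\textbf{Main obstacle.} The delicate step is the Fubini computation with the careful boundary treatment, together with ensuring that the grid points avoid the exceptional set. The measure-theoretic selection of $s_n$ handles the latter.
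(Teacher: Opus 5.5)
Your proof is correct. The paper does not actually prove this lemma. It only says the argument is that of Lemma 3.3.1 in the cited reference, modulo a change in an exceptional set, so I cannot check whether your route matches that source. Your shift-averaging argument nonetheless gives a complete, self-contained proof of the statement as written. Its ingredients are:
\begin{itemize}
\item Tonelli over the shift $s$, together with continuity of translations in $L^{1}$;
\item Chebyshev, giving a set of good shifts of measure at least $h/2$;
\item removal of the null set $\bigcup_{j\in\mathbb{Z}}(C-jh)$;
\item a separate treatment of the first cell $[0,s_{n})$, using $c(0)<\infty$.
\end{itemize}

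One point is worth making explicit: joint measurability for Tonelli. On each region $\{(s,\tau): s+jh\le\tau<s+(j+1)h\}$ the function $c_{n,s}(\tau)=c(s+jh)$ depends on $s$ alone. Hence $(s,\tau)\mapsto|c(\tau)-c_{n,s}(\tau)|$ is product-measurable even for a merely Lebesgue measurable representative of $c$.

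Your route also has a concrete advantage. The choice of shift lets the partition points avoid an \emph{arbitrary} prescribed null set, not just the set where $c$ is infinite. This is what Step 2 of the appendix actually uses: it evaluates the growth bound of $(\mathcal{H}_{A}^{2})$, which holds only almost everywhere, at the points $t_{i}^{n}$. The lemma as stated keeps the $t_{i}^{n}$ off the set where $c=+\infty$ only. One would therefore have to enlarge $C$, or redefine $c=+\infty$ on the exceptional set of $(\mathcal{H}_{A}^{2})$, whereas your argument covers this directly.
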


\begin{proof}[Proof of Theorem \ref{Kunze1}]
    The proof is based on a time discretization of the family of maximal monotone operators and is divided into several steps.
    
    \noindent$\circ$ \textit{Step 1 - Construction of an approximate solution:} By Lemma \ref{lemma:Partition}, there exists a subset $C\subset [0,T]$ of measure zero such that
    \begin{equation*}
        c(t) \in [0,+\infty[, \quad \forall t\in [0,T]\backslash C,
    \end{equation*}
    and there exists a sequence of partitions satisfying items $1$--$3$ of the Lemma \ref{lemma:Partition}. For every $i=1,\ldots,N_{n}$, define
    \begin{align*}
        \alpha_{i}^{n} := \int_{t_{i-1}^{n}}^{t_{i}^{n}} |\dot{a}(\tau)|d\tau.
    \end{align*}
    Since $a(\cdot)\in W^{1,1}([0,T])$, one has
    \begin{equation}
        \label{eq:alpha-sum}
        \sum_{i=1}^{N_{n}}\alpha_{i}^{n} = \int_{0}^{T} |\dot{a}(\tau)|d\tau.
    \end{equation}
    For $t\in[t_{i}^{n},t_{i+1}^{n})$, the piecewise constant approximation is defined by $A_{n}(t):=A(t_{i}^{n})$. The hypothesis $(\mathcal{H}_{A}^{1})$ gives
    \begin{equation*}
        \label{eq:dis-alpha}
        \operatorname{dis}(A(t_{i-1}^{n}),A(t_{i}^{n}))\leq \alpha_{i}^{n}\qquad\forall i=1,\ldots,N_{n}.
    \end{equation*}
    A standard consequence of the definition of Vladimirov's pseudo-distance (see \cite{MR1092799}) is that, for every $i=1,\ldots,N_{n}$ and every $x\in D(A(t_{i-1}^{n}))$, one has
    \begin{equation*}
        \label{eq:domain-distance-general}
        d(x,D(A(t_{i}^{n}))) \leq \operatorname{dis}(A(t_{i-1}^{n}),A(t_{i}^{n})).
    \end{equation*}
    Hence (see \cite{MR1092799}),
    \begin{equation}
        \label{eq:domain-distance}
        d(x,D(A(t_{i}^{n}))) \leq \alpha_{i}^{n}.
    \end{equation}
    An approximate solution $u_{n}$ is constructed recursively. At the first step, $u_{n}(0)=u_{0}$ is set. Since $A(0)$ is a maximal monotone operator, the classical theory of evolution equations governed by maximal monotone operators (see \cite{MR348562}) ensures that the differential inclusion
    \begin{align*}
        \begin{cases}
            \dot{u}(t) \in -A(0)u(t)\quad\textrm{a.e. }t\in[0,t_{1}^{n}], \\
            u(0) = u_{0}
        \end{cases}
    \end{align*}
    admits a unique solution on $[0,t_{1}^{n})$. This solution is denoted by $u_{n}$ on this interval. The corresponding left-hand limit at $t_{1}^{n}$ is denoted by
    \begin{align*}
        u^{-}_{n}(t_{1}^{n}):= \lim_{t\nearrow t_{1}^{n}} u_{n}(t).
    \end{align*}
    This limit may fail to belong to the new domain $D(A(t_{1}^{n}))$. Therefore, in order to restart the evolution with the operator $A(t_{1}^{n})$, a point $u_{1}^{n}\in D(A(t_{1}^{n}))$ sufficiently close to $u^{-}_{n}(t_{1}^{n})$ is selected. By the assumption,
    \begin{align*}
        \operatorname{dis}(A(t_{0}^{n}),A(t_{1}^{n})) \leq |a(t_{0}^{n})-a(t_{1}^{n})| \leq \alpha_{1}^{n},
    \end{align*}
    and by the corresponding estimate for the distance between the domains, a point $u_{1}^{n}\in D(A(t_{1}^{n}))$ can be selected such that
    \begin{align*}
        \Vert u_{1}^{n} - u^{-}_{n}(t_{1}^{n})\|\leq 2\alpha_{1}^{n}.
    \end{align*}
    As before, the differential inclusion
    \begin{align*}
        \begin{cases}
            \dot{u}(t) \in -A(t_{1}^{n})u(t)\qquad\textrm{a.e. }t\in[t_{1}^{n},t_{2}^{n}),\\ 
            u(t_{1}^{n}) = u_{1}^{n},
        \end{cases}
    \end{align*}
    admits a unique solution on $[t_{1}^{n},t_{2}^{n})$. This solution is again denoted by $u_{n}$ on this interval.

    Suppose now that $u_{n}$ has already been constructed on $[t_{i-1}^{n},t_{i}^{n})$. At the endpoint $t_{i}^{n}$, the left-hand limit $u_{n}^{-}(t_{i}^{n})$ is first considered, namely,
    \begin{align*}
        u^{-}_{n}(t_{i}^{n}):= \lim_{t\nearrow t_{i}^{n}} u_{n}(t).
    \end{align*}
    A point $u_{i}^{n}\in D(A(t_{i}^{n}))$ sufficiently close to $u_{n}^{-}(t_{i}^{n})$ is then selected so that
    \begin{align*}
        \|u_{i}^{n} - u^{-}_{n}(t_{i}^{n})\|\leq 2\alpha_{i}^{n}.
    \end{align*}
    On every interval $[t_{i}^{n},t_{i+1}^{n})$, $u_{n}$ is defined as the unique solution of
    \begin{equation*}
        \label{eq:piecewise-evolution}
        \begin{cases}
            \dot{u}_{n}(t) \in -A(t_{i}^{n})u_{n}(t) \qquad\textrm{a.e. }t\in [t_{i}^{n},t_{i+1}^{n}),\\
            u_{n}(t_{i}^{n}) = u_{i}^{n}. 
        \end{cases}
    \end{equation*} 
    Since $A(t_{i}^{n})$ is maximal monotone, the classical theory of evolution equations governed by maximal monotone operators ensures the existence and uniqueness of $u_{n}$ on each interval (see \cite{MR348562}).

    \noindent $\circ$ \textit{Step 2 - Uniform bound on the approximate solutions:} Let $A^{0}(t)x$ denote the element of minimal norm in $A(t)x$. From $(\mathcal{H}_{A}^{2})$, one has
    \begin{equation*}
        \label{eq:minimal-section}
        \|A^{0}(t)x\| \leq c(t)(1+\|x\|). 
    \end{equation*}
    On each interval $[t_{i}^{n},t_{i+1}^{n})$, the mapping $u_{n}$ has a right derivative $\tfrac{d^{+}u_{n}}{dt}(t)$ (see Theorem 3.1 in \cite{MR348562}) satisfying
    \begin{align*}
        -\frac{d^{+}u_{n}}{dt}(t) = A^{0}(t_{i}^{n})u_{n}(t).      
    \end{align*}
    Therefore, by the hypothesis $(\mathcal{H}_{A}^{2})$,
    \begin{equation*}
        \label{eq:derivative-estimate}
        \|\dot{u}_{n}(t)\| \leq c(t_{i}^{n})(1+\|u_{n}(t)\|) \qquad\textrm{a.e. } t\in [t_{i}^{n},t_{i+1}^{n}).  
    \end{equation*}
    Then
    \begin{equation}
        \label{eq:derivative-cn}
        \|\dot{u}_{n}(t)\| \leq c_{n}(t)(1+\|u_{n}(t)\|)  \qquad\textrm{a.e. } t\in [t_{i}^{n},t_{i+1}^{n}),
    \end{equation}
    and hence
    \begin{equation*}
        \label{eq1KMM}
        \|u_{n}(t)-u_{n}(t_{i}^{n})\| \leq \int_{t_{i}^{n}}^{t} c_{n}(\tau)(1+\|u_{n}(\tau)\|)d\tau  \qquad \forall t\in[t_{i}^{n},t_{i+1}^{n}).
    \end{equation*}
    By passing to the limit as $t\to(t_{i+1}^{n})^{-}$, it follows that
    \begin{equation*}
        \label{eq2KMM}
        \|u_{n}^{-}(t_{i+1}^{n})-u_{n}(t_{i}^{n})\| \leq \int_{t_{i}^{n}}^{t_{i+1}^{n}} c_{n}(\tau)(1+\|u_{n}(\tau)\|)d\tau.
    \end{equation*}
    Let $t\in[0,T]$. Then there exists $i$ such that $t\in[t_{i}^{n},t_{i+1}^{n}]$. Recalling that $u_{n}(t_{k}^{n})=u_{k}^{n}$ and setting $u_{0}^{n}:=u_{0}$, the increment from $u_{0}$ to $u_{n}(t)$ can be decomposed into the continuous evolutions on the subintervals and the jumps at the partition points. Therefore,
    \begin{align*}
        \|u_{n}(t)-u_{n}(0)\|
        &\leq \|u_{n}(t)-u_{n}(t_{i}^{n})\|
        +\sum_{k=0}^{i-1}\|u_{n}^{-}(t_{k+1}^{n})-u_{n}(t_{k}^{n})\|
        +\sum_{k=1}^{i}\|u_{n}(t_{k}^{n})-u_{n}^{-}(t_{k}^{n})\|\\
        &\leq \int_{t_{i}^{n}}^{t}c_{n}(\tau)(1+\|u_{n}(\tau)\|)d\tau
        +\sum_{k=0}^{i-1}\int_{t_{k}^{n}}^{t_{k+1}^{n}}c_{n}(\tau)(1+\|u_{n}(\tau)\|)d\tau
        +2\sum_{k=1}^{i}\alpha_{k}^{n}\\
        &\leq 2\sum_{k=1}^{i}\alpha_{k}^{n}
        +\int_{0}^{t}c_{n}(\tau)(1+\|u_{n}(\tau)\|)d\tau.
    \end{align*}
    Hence,
    \begin{equation*}
        \label{eq:pre-gronwall}
        \|u_{n}(t)\| \leq \|u_{0}\| + 2\sum_{\substack{i=1\\t_{i}^{n}\leq t}}^{N_{n}}\alpha_{i}^{n} +\int_{0}^{t} c_{n}(\tau)(1+\|u_{n}(\tau)\|) d\tau, \quad \forall t\in [0,T]. 
    \end{equation*}
    By \eqref{eq:alpha-sum},
    \begin{equation*}
        \label{eq:alpha-total}
        \sum_{\substack{i=1\\t_{i}^{n}\leq t}}^{N_{n}}\alpha_{i}^{n} \leq \|\dot{a}\|_{L^{1}([0,T])}. 
    \end{equation*}
    Consequently,
    \begin{equation*}
        \label{eq:gronwall-form}
        \|u_{n}(t)\| \leq \|u_{0}\| + 2\|\dot{a}\|_{L^{1}([0,T])} + \int_{0}^{t} c_{n}(\tau)d\tau + \int_{0}^{t} c_{n}(\tau)\|u_{n}(\tau)\| d\tau.
    \end{equation*}
    Since
    \begin{align*}
        \sup_{n\in\mathbb{N}}\|c_{n}\|_{L^{1}([0,T])}\leq \|c\|_{L^{1}([0,T])}+T,
    \end{align*}
    Gronwall's inequality yields
    \begin{equation*}
        \label{eq:gronwall-result}
        \|u_{n}(t)\| \leq \big(\|u_{0}\| + 2\|\dot{a}\|_{L^{1}([0,T])} + \|c\|_{L^{1}([0,T])} + T\big)\exp\big[\|c\|_{L^{1}([0,T])} + T \big].
    \end{equation*}
    Therefore, there exists a constant $M>0$, independent of $n$, such that
    \begin{equation}
        \label{eq:uniform-u-bound}
        \sup_{n\in\mathbb{N}} \|u_{n}\|_{L^{\infty}([0,T];\mathcal{H})} \leq M.
    \end{equation}
    For instance, one may take
    \begin{equation*}
        \label{eq:M}
        M=
        \big(\|u_{0}\| + 2\|\dot{a}\|_{L^{1}([0,T])} + \|c\|_{L^{1}([0,T])}+T\big)\exp\big[\|c\|_{L^{1}([0,T])}+T\big].
    \end{equation*}

    \noindent $\circ$ \textit{Step 3 - Uniform bound on the total variation:} By \eqref{eq:derivative-cn}, the uniform bound \eqref{eq:uniform-u-bound}, and the estimate of the jumps, one has
    \begin{align*}
        \operatorname{Var}(u_{n};[0,T])
        &\leq \int_{0}^{T}\|\dot{u}_{n}(t)\|dt + 2\sum_{i=1}^{N_{n}}\alpha_{i}^{n}\\
        &\leq (1+M)\|c_{n}\|_{L^{1}([0,T])} + 2\|\dot{a}\|_{L^{1}([0,T])}.
    \end{align*}
    Consequently,
    \begin{equation}
        \label{eq:uniform-variation}
        \sup_{n\in\mathbb{N}}\operatorname{Var}(u_{n};[0,T])<+\infty.
    \end{equation} 
    More precisely, let $0\leq s<t\leq T$. There exist $i,j\in\{0,\ldots,N_{n}-1\}$ such that $s\in[t_{j}^{n},t_{j+1}^{n})$ and $t\in[t_{i}^{n},t_{i+1}^{n})$, where it may be assumed that $j<i$. By decomposing the increment of $u_{n}$ into the continuous evolution on the subintervals and the jumps at the partition points, it follows that
    \begin{align*}
        \label{eq:local-variation-1}
        \|u_{n}(t)-u_{n}(s)\|
        &\leq (1+M)\int_{s}^{t}c_{n}(\tau)d\tau
        +2\sum_{k=j+1}^{i}\alpha_{k}^{n}\nonumber\\
        &\leq (1+M)\int_{s}^{t}c_{n}(\tau)d\tau
        +2\int_{s}^{t}|\dot{a}(\tau)|d\tau
        +2\int_{t_{j}^{n}}^{t_{j+1}^{n}}|\dot{a}(\tau)|d\tau.
    \end{align*}
    Moreover, by the properties of the sequence $(c_{n})_{n}$ provided by Lemma \ref{lemma:Partition},
    \begin{align*}
        \int_{s}^{t}c_{n}(\tau)d\tau
        \leq
        \int_{s}^{t}c(\tau)d\tau + 2\int_{t_{j}^{n}}^{t_{j+1}^{n}}c(\tau)d\tau + \frac{2T}{n}.
    \end{align*}
    Hence,
    \begin{equation}
        \label{eq:local-variation}
        \|u_{n}(t)-u_{n}(s)\|
        \leq 2\int_{s}^{t}|\dot{a}(\tau)|d\tau + (1+M)\int_{s}^{t}c(\tau)d\tau + \varepsilon_{n},
    \end{equation}
    where
    \begin{align*}
        \varepsilon_{n}
        :=
        2\max_{0\leq j\leq N_{n}-1}
        \int_{t_{j}^{n}}^{t_{j+1}^{n}}|\dot{a}(\tau)|d\tau + (1+M)\left( 2\max_{0\leq j\leq N_{n}-1} \int_{t_{j}^{n}}^{t_{j+1}^{n}} c(\tau) d\tau + \frac{2T}{n} \right).
    \end{align*}
    Since $\dot{a}(\cdot),c(\cdot)\in L^{1}([0,T];\mathbb{R})$ and the mesh size of the partitions converges to zero, the absolute continuity of the Lebesgue integral yields $\varepsilon_{n}\to0$ as $n\to\infty$.

    \noindent$\circ$ \textit{Step 4 - Compactness and passage to the limit:} The estimates \eqref{eq:uniform-u-bound} and \eqref{eq:uniform-variation} give a uniform bound of $(u_{n})_{n}$ in $\operatorname{BV}([0,T];\mathcal{H})$. Therefore, by the standard compactness theorem for functions of bounded variation with values in a Hilbert space (see, for instance, Theorem 0.2.1 in \cite{MR1231975}), there exist a subsequence, still denoted by $(u_{n})_{n}$, and a function $u\colon[0,T]\to\mathcal{H}$ such that
    \begin{equation}
        \label{eq:pointwise-convergence}
        u_{n}(t)\rightharpoonup u(t)
        \qquad\textrm{weakly in }\mathcal{H},\quad\forall t\in[0,T].
    \end{equation}
    Moreover, by the weak lower semicontinuity of the norm and \eqref{eq:local-variation}, for every $0\leq s\leq t\leq T$, one has
    \begin{align*}
        \|u(t)-u(s)\|
        &\leq \liminf_{n\to+\infty}\|u_{n}(t)-u_{n}(s)\|\\
        &\leq
        2\int_{s}^{t}|\dot{a}(\tau)|d\tau
        +(1+M)\int_{s}^{t}c(\tau)d\tau.
    \end{align*}
    Since $c(\cdot),|\dot{a}(\cdot)|\in L^{1}([0,T];\mathbb{R})$, it follows that
    \begin{align*}
        u(\cdot)\in W^{1,1}([0,T];\mathcal{H}) =  W^{1,1}([0,T];\mathcal{H}).
    \end{align*}
    In particular,
    \begin{align*}
        \|\dot{u}(t)\|
        \leq 2|\dot{a}(t)|+(1+M)c(t) \qquad\textrm{for a.e. }t\in[0,T].
    \end{align*}
    Since $u_{n}(0)=u_{0}$ for every $n\in\mathbb{N}$, \eqref{eq:pointwise-convergence} also yields $u(0)=u_{0}$.

    \noindent$\circ$ \textit{Step 5 - Uniqueness:} Let $u$ and $v$ be two solutions of \eqref{eq:evolution-equation} with the same initial condition
    \begin{align*}
        u(0)=v(0)=u_{0}.
    \end{align*}
    Define
    \begin{align*}
        p(t):=-\dot{u}(t)\in A(t)u(t)
        \quad\textrm{and}\quad
        q(t):=-\dot{v}(t)\in A(t)v(t).
    \end{align*}
    Since $A(t)$ is monotone, one has
    \begin{equation*}
        \label{eq:monotonicity}
        \left\langle p(t)-q(t), u(t)-v(t) \right\rangle \geq0.
    \end{equation*}
    Since
    \begin{align*}
        p(t)-q(t)=-(\dot{u}(t)-\dot{v}(t)),
    \end{align*}
    it follows that
    \begin{equation*}
        -\left\langle\dot{u}(t)-\dot{v}(t),u(t)-v(t) \right\rangle \geq0.
    \end{equation*}
    Therefore,
    \begin{equation*}
        \frac{d}{dt}\|u(t)-v(t)\|^{2}
        = 2\left\langle \dot{u}(t)-\dot{v}(t), u(t)-v(t) \right\rangle \leq0
    \end{equation*}
    for a.e. $t\in[0,T]$. Hence,
    \begin{align*}
        \|u(t)-v(t)\|^{2}
        \leq \|u(0)-v(0)\|^{2} = 0 \qquad\forall t\in[0,T].
    \end{align*}
    Consequently, $u(t)=v(t)$ for all $t\in[0,T]$. Thus, the solution is unique.

    \noindent$\circ$ \textit{Step 6 - The variation estimate:} By the weak convergence \eqref{eq:pointwise-convergence}, the weak lower semicontinuity of the norm, and \eqref{eq:local-variation}, one has
    \begin{align*}
        \|u(t)-u(s)\|
        \leq \liminf_{n\to+\infty}\|u_{n}(t)-u_{n}(s)\|
        \leq (1+M)\int_{s}^{t}c(\tau)d\tau
        +2\int_{s}^{t}|\dot{a}(\tau)|d\tau.
    \end{align*}
    Let $K:=\max\{1+M,2\}$. Then
    \begin{equation*}
        \|u(t)-u(s)\|
        \leq K\left( \int_{s}^{t} c(\tau)d\tau + \int_{s}^{t}|\dot{a}(\tau)|d\tau \right),
    \end{equation*}
    and hence
    \begin{equation*}
        \|u(t)-u(s)\|
        \leq
        K\int_{s}^{t}\left(c(\tau)+|\dot{a}(\tau)|\right)d\tau
    \end{equation*}
    for every $0\leq s\leq t\leq T$. This completes the proof. 
\end{proof}

{\bf Acknowledgements:} {\it This work was partially supported by the EIPHI Graduate School (contract ANR-17-EURE-0002), ANID Chile under FONDECYT Regular Grant No. 1261728, and ANID-Subdirección de Capital Humano through the Doctorado Nacional 2025 Scholarship No. 21250758.}

\bibliographystyle{plain}
\bibliography{references.bib}
\end{document}